\newcommand\numberthis{\addtocounter{equation}{1}\tag{\theequation}}
\newtheorem{Lem}{Lemma}[section]
\newtheorem{Prop}{Proposition}[section]
\newtheorem{thm}{Theorem}
\newtheorem{Remark}{Remark}[section]
\def\R{\mathbb R}
\def\f12{\frac 1 2}
\def\a{\alpha}
\def\b{\beta}
\def\vep{\varepsilon}
\def\la{\lambda}
\def\La{\Lambda}
\def\om{\omega}
\def\pa{\partial}
\def\R{\mathbb{R}}
\begin{document}

\title{On the 3D Relativistic
Vlasov-Maxwell System with large Maxwell field}

\author{Dongyi Wei \and Shiwu Yang}

\AtEndDocument{ {\footnotesize%
  \addvspace{\medskipamount}
  \textsc{$\quad \quad$School of Mathematical Sciences, Peking University, Beijing, China} \par
  \textit{E-mail address}: \texttt{jnwdyi@pku.edu.cn} \par

  \addvspace{\medskipamount}
  \textsc{Beijing International Center for Mathematical Research, Peking University, Beijing, China} \par
  \textit{E-mail address}: \texttt{shiwuyang@math.pku.edu.cn}
  }}

\date{}

  \maketitle

\begin{abstract}
This paper is devoted to the study of relativistic Vlasov-Maxwell system in three space dimension. For a class of large initial data, we prove the global existence of classical solution with sharp decay estimate. The initial Maxwell field is allowed to be arbitrarily large and the initial density distribution is assumed to be small and decay with rate $(1+|x|+|v|)^{-9-}$. In particular, there is no restriction on the support of the initial data.
\end{abstract}

 \section{Introduction}
  In this paper, we study the Cauchy problem to the relativistic Vlasov-Maxwell (RVM) system
  \begin{align}
  \label{RVM}
  \left\{\begin{array}{l}
   \partial_t f+\hat{v}\cdot\nabla_xf+(E+\hat{v}\times B)\cdot\nabla_vf=0,\\
   \partial_t E-\text{curl}B=-4\pi j, \quad \text{div}E=4\pi \rho,\\
    \partial_tB+\text{curl}E=0, \phantom{-\pi j}\quad \text{div}B=0,\\
E(0, x)=E_0,\quad B(0, x)=B_0,\quad f(0, x, v)=f_0(x, v)
    \end{array}\right.
\end{align}
in $\mathbb{R}^{1+3}$, where $f(t,x,v)$ denotes the density distribution for the particles and $\hat{v}$ is the relativistic speed
\[
\hat{v}=\frac{v}{\sqrt{1+|v|^2}},\quad v\in \mathbb{R}^3.
\]
The total charge density $\rho$ and total current density $j$ are given by
\begin{align}
\label{rhoj}
 &\rho(t,x)=\int_{\R^3}f(t,x,v)dv,\quad j(t,x)=\int_{\R^3}\hat{v}f(t,x,v)dv,
\end{align}
which determine the electromagnetic field $(E,B)$.
The admissible initial data set $(E_0, B_0, f_0)$ must verify the compatibility condition
\begin{align*}
\text{div}E_0=4\pi \rho_0=4\pi\int_{\R^3}f_0(x,v)dv,\quad \text{div}B_0=0.
\end{align*}

The RVM system is used to describe the evolution of collisionless particles driven by electromagnetic field in plasma physics and has drawn extensive attention in the past decades. Existence and uniqueness of local classical solutions are known since the work \cite{wollman84:local:VM} of Wollman. Global weak solution has been constructed by DiPerna-Lions in \cite{Lions89:globalweak:VM}. The important problem is whether or not the classical smooth solution can be propagated for all time, which still remains open.
In the seminal work \cite{Glassey:VM:singularity:86} of
Glassey-Strauss, they showed that the uniform boundedness of the support of the velocity for bounded times implies global $C^1$ solutions, which can be viewed as a continuation criteria for RVM. Recently, it is of increasing interest to look for new methods and new breakdown criteria in order to tackle this problem, see
  \cite{Glassey87:VM:Hivelocity}, \cite{Glassey89:3DVM:largevelocity}, \cite{Illner10:VM}, \cite{Luk14:VM:criterion}, \cite{Luk:VlasovMaxwell:Strichartz}, \cite{Pallar15:VM:criteria}, \cite{Patel18:newcriteria:VM} and references therein.

Nevertheless, global regularity holds with symmetry or certain smallness on the field. Symmetry (translation symmetry or spherical symmetry) reduces the system to the associated one in lower dimensions, which has been well understood as in \cite{Glassey90:1andhalfD:VM}, \cite{Horst90:VM:sphericalSy}, \cite{Glassey:VM:2.5D}, \cite{Glassey98:VM:2D:1},
\cite{Luk:VlasovMaxwell:Strichartz}, \cite{Wang20:3D:VM:large}. Smallness allows one to use perturbative method to study the evolution of the solution. First such result was contributed by Glassey-Strauss in \cite{Glassey87:VM:absence}, where it has been shown that compactly supported small initial data lead to a unique global classical solution of RVM. This result had shortly been improved to almost neutral data, allowing the density distribution to be large in \cite{Glassey88:VM:neutral}. Another interesting generalization was established by Rein in \cite{Rein90:VM}, where he proved that given large solution verifying certain decay property is stable under small perturbation. A direct consequence of this result is the global existence of nearly spherical symmetric solutions. However all these results heavily relied on the support of the velocity and hence were only applicable to compactly supported initial data. This even excludes the physical case of particles with Gaussian distribution.

The first successful attempt to allow arbitrarily large velocity was obtained by
Schaeffer in \cite{Schaeffer:VM:notcompat:p}. He demonstrated that the solution stays regular for all time as long as the initial density distribution decays sufficiently fast in velocity (with decay rate $(1+|v|)^{-60-12\sqrt{17}-}$). However it was still assumed that the initial data are small and compactly supported in the spatial variable.
To completely remove the restriction on the support of the initial data in the scope of small data regime, Fourier analysis (\cite{Klainerman02:VM:Fourier}, \cite{Wang18:3D:VM}) and vector field method (\cite{Jacques17:Vect:Vlasov}, \cite{Leo18:3D:VM}) have recently been used to study the global dynamics of RVM.
These new methods are robust in the sense that they also give the sharp decay estimates for the derivatives of the solutions, which in the meanwhile means that it requires higher regularity of the initial data and the arguments are more involved.

The goal of this paper consists mainly two folds: Firstly we are trying to construct global large solutions to RVM with asymptotic decay properties. This is partially inspired by the work \cite{Rein90:VM} of Rein and the global stability of Maxwell field coupled to scalar fields \cite{yangMKG}, \cite{Yang:mMKG:3D:largeMaxwell}. We show that linear Maxwell field is also stable under perturbation of Vlasov field, that is, smallness is only required on the density distribution and the Maxwell field can be arbitrarily large. Secondly, we remove the restriction on the support of the initial data and improve the decay rate of the density distribution under the frame work established early by Glassey-Strauss (using the representation formula for linear solutions).

More precisely, we prove in this paper the following result.
\begin{thm}
\label{thm:main}
Consider the Cauchy problem to the relativistic Vlasov-Maxwell equation \eqref{RVM} with admissible initial data $(E_0, B_0, f_0)$ such that $E_0, B_0\in C^2,\ f_0\in C^1,\ f_0\geq 0$ and
\begin{align*}
|\nabla^kE_0(x)|+|\nabla^kB_0(x)|&\leq M(1+|x|)^{-2-k},\quad\forall  k=0,1,2,\\
|f_0(x,v)|&\leq \varepsilon_0(1+|x|+|v|)^{-q}
\end{align*}
for some positive constants $M>1,\ q>9$, $\vep_0$.  Then for sufficiently small $\varepsilon_0$ relying only on $M$ and $q$, the equation \eqref{RVM} admits a global $C^1$ solution $(E(t, x), B(t, x), f(t, x, v))$ verifying the following decay estimates
 \begin{align*}
 |E(t,x)|+|B(t,x)| &\leq CM (1+\left|t-|x|\right|)^{-1}(1+t+|x|)^{-1},\\
 \int_{\mathbb{R}^3}f(t, x, v)dv &\leq C(M) \varepsilon_0 (1+t+|x|)^{-3}
\end{align*}
for all $t\geq 0,\ x\in\R^3$ with some constant $C$ depending only on $q$ and constant $C(M)$ depending also on $M$.
\end{thm}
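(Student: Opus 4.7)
My plan is to run a continuity argument based on the Glassey--Strauss representation formula for the Maxwell field. On a maximal time interval $[0,T^*)$ I assume the bootstrap bounds
\begin{align*}
|E(t,x)|+|B(t,x)| &\leq 2CM(1+|t-|x||)^{-1}(1+t+|x|)^{-1},\\
\int_{\mathbb{R}^3} f(t,x,v)\,dv &\leq 2C(M)\varepsilon_0(1+t+|x|)^{-3},\\
f(t,x,v) &\leq 2\varepsilon_0(1+|x|+|v|)^{-q},
\end{align*}
and aim to improve each factor of $2$ to $1$. Combined with the Wollman local existence theory this yields a global $C^1$ solution. For the pointwise bound on $f$ I use the characteristic representation $f(t,x,v)=f_0(X(0),V(0))$; for the velocity integral a sharp spreading estimate for the characteristics; and for $(E,B)$ the Glassey--Strauss formula together with the characteristic control. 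Differentiating the characteristic ODE then yields companion bootstraps for $\partial_{t,x,v}f$ needed to estimate the transport piece of the field representation.

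\textbf{Characteristics and transport of $f$.} Let $(X(s),V(s))=(X(s;t,x,v),V(s;t,x,v))$ denote the backward characteristic with endpoint $(x,v)$ at $s=t$, solving $\dot X=\hat V$, $\dot V=E(s,X)+\hat V\times B(s,X)$. Under the bootstrap,
$$|V(s)-v|\lesssim M\int_s^t\frac{d\tau}{(1+|\tau-|X(\tau)||)(1+\tau+|X(\tau)|)}.$$
When $|\hat v|$ is bounded away from $1$ the trajectory is strictly timelike and the integrand is $\lesssim (1+\tau)^{-2}$, yielding a uniform drift bound. For large $|v|$, $\hat v$ nearly grazes the light cone; here I show by careful geometric comparison that $|X(0)|+|V(0)|\sim|x|+|v|$ up to a multiplicative constant depending only on $M$, so $f(t,x,v)\lesssim\varepsilon_0(1+|x|+|v|)^{-q}$ is propagated. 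The sharp $(1+t+|x|)^{-3}$ rate for $\int f\,dv$ then follows by a change of variables in $v$: a lower bound on the Jacobian of $v\mapsto X(0;t,x,v)$ (again obtained from the variational equation of the characteristic system plus the bootstrap on $E,B$) implies that the measure of the set of $v$ sending $X(0)$ into a fixed bounded region is $\lesssim(1+t)^{-3}$, and this combines with the pointwise $v$-decay of $f$ to give the sharp rate.

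\textbf{Maxwell field estimates.} For the field I use the Glassey--Strauss formula
$$E(t,x)=E^{\mathrm{lin}}(t,x)+\iiint_{|y-x|=t-s} K_E(t-s,x-y,v)\,f(s,y,v)\,dv\,d\sigma(y)\,ds,$$
and similarly for $B$, where $K_E$ decomposes into a Coulomb-type singular kernel of order $(1-\hat v\cdot\omega)^{-2}|x-y|^{-2}$ plus a transport kernel acting by $(\partial_s+\hat v\cdot\partial_y)f$ on the past light cone. The linear contribution inherits, from the hypothesis $|\nabla^k E_0|+|\nabla^k B_0|\lesssim M(1+|x|)^{-2-k}$ for $k=0,1,2$, the classical pointwise decay $CM(1+|t-|x||)^{-1}(1+t+|x|)^{-1}$, which is the source of the large factor $CM$. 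Inserting the bootstrap estimates for $f$ and $\partial_{t,x,v}f$ into the nonlinear term, the light-cone integrals are bounded by $C(M)\varepsilon_0(1+|t-|x||)^{-1}(1+t+|x|)^{-1}$, which for $\varepsilon_0$ small in terms of $M$ is much smaller than half of $CM(1+|t-|x||)^{-1}(1+t+|x|)^{-1}$ and so closes the field bootstrap.

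\textbf{Main obstacle.} The hardest step is the joint control of characteristics, Jacobians, and the singular kernel in the regime $|v|\gtrsim 1+t+|x|$: there $\hat v$ is nearly null so the particle can surf along the cone $|x|=t$ where the large Maxwell field is most singular, and the factor $(1-\hat v\cdot\omega)^{-2}$ in $K_E$ threatens both the $v$-integrability of the field representation and the comparability $|X(0)|+|V(0)|\sim|x|+|v|$. It is at exactly this point that the gap between the minimal $q>3$ (needed merely for $\int f\,dv<\infty$) and the assumed $q>9$ is spent: the surplus $v$-decay absorbs the $(1-\hat v\cdot\omega)^{-2}$ singularity when integrated against $f$, simultaneously tolerates $M$-dependent distortions of the characteristic flow, and still leaves enough room to recover the sharp $(1+t+|x|)^{-3}$ decay of the spatial density. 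Making this book-keeping precise --- rather than any single estimate in isolation --- is the core technical challenge of the proof.
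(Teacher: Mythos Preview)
Your bootstrap assumption $f(t,x,v)\le 2\varepsilon_0(1+|x|+|v|)^{-q}$ cannot be closed, and this breaks the whole scheme. Take a particle with $X(0)=0$ and $|V(0)|\sim 1$; for free transport (or any small perturbation thereof) one has $X(t)\approx t\hat V(0)$, $V(t)\approx V(0)$, so at the point $(x,v)=(X(t),V(t))$ with $|x|\sim t$ the density is $f_0(0,V(0))\sim\varepsilon_0$, while your claimed bound would force it to be $\lesssim\varepsilon_0(1+t)^{-q}$. In other words $|X(0)|+|V(0)|\sim|x|+|v|$ is simply false for generic $(x,v)$; what is true (and what the paper proves, Lemma~2.1/Proposition~2.1) is only $|V(0)|\sim|v|$ up to an additive $M$-dependent constant, giving $f(t,x,v)\lesssim_M\varepsilon_0(1+|v|)^{-q}$ with \emph{no} decay in $x$. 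The $(1+t+|x|)^{-3}$ decay of $\int f\,dv$ therefore cannot come from a pointwise bound on $f$; it must come from a \emph{measure estimate} on the velocity support.

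The paper obtains that measure estimate by a device you do not mention: a dyadic decomposition $f_0=\sum_k f_{0,k}$ with $f_{0,k}$ supported in $\{|x|+|v|\le 2^{k+1}\}$. For each piece one controls the diameter of the $\hat v$-support of $f_{[k]}(t,x,\cdot)$ (Lemma~2.3) using only a weak H\"older-type norm $\|K\|_{1+\alpha}$ on the field, then applies Schaeffer's volume lemma. The summation over $k$ is exactly where the exponent $q>9$ is spent---not, as you suggest, in absorbing the kernel singularity $(1+\hat v\cdot\omega)^{-2}$ (that singularity costs only a factor $\sqrt{1+|v|^2}$ after the standard Glassey--Strauss bound). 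Your alternative route via a lower bound on the Jacobian of $v\mapsto X(0)$, and your ``companion bootstraps for $\partial_{t,x,v}f$'', would both require pointwise control of $\nabla_x(E,B)$ and smallness of $\nabla f_0$, neither of which is assumed; indeed the paper emphasizes that only the $C^0$ norm of $f_0$ need be small, and it deliberately works with a norm weaker than weighted $C^1$ on the field precisely to avoid these derivative losses.
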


We give several remarks.

\begin{Remark}
We emphasize that the smallness is only required on the $C^0$ norm of the initial density distribution instead of  the $C^1$ norm in the previous works.
\end{Remark}
\begin{Remark}
Compared to the rapid decay in velocity of the initial density distribution obtained by Schaeffer, we improve the decay rate to $(1+|x|+|v|)^{-9-}$.
\end{Remark}

\begin{Remark}
Same argument also applies to the stability of given large solutions of RVM with appropriated decay estimates. We hence can extend the result of Rein in \cite{Rein90:VM} to general unrestricted data.
\end{Remark}



The proof relies on the frame work introduced by Glassey-Strauss in \cite{Glassey:VM:singularity:86}, \cite{Glassey87:VM:Hivelocity}, \cite{Glassey89:3DVM:largevelocity}. The solution to the nonlinear RVM can be constructed via the standard iteration scheme by starting with
$$E^{(0)}(t,x)=B^{(0)}(t,x)=0.$$
Given $(E^{(n-1)}$, $B^{(n-1)})$, define $ f^{(n)}$ as solution
of the linear Vlasov equation
\begin{align*}
 \partial_tf^{(n)}+\hat{v}\cdot\nabla_xf^{(n)}+(E^{(n-1)}&+\hat{v}\times B^{(n-1)})\cdot\nabla_vf^{(n)}=0,\quad
 f^{(n)}(0,x,v)=f_0(x,v),
\end{align*}
which can be solved by using the method of characteristics
\begin{align*}
 f^{(n)}(t,x,v)=f_0(X(0,t,x,v),V(0,t,x,v)).
\end{align*}
Here $X=X(s,t,x,v)$, $V=V(s,t,x,v)$ are solutions of the ODE
\begin{equation}
\label{eq:cha}
\begin{cases}
 &\partial_sX=\hat{V}= V/\sqrt{1+|V|^2},\quad \partial_sV=E^{(n-1)}(s,X)+\hat{V}\times B^{(n-1)}(s,X),\\
 & X(t,t,x,v)=x,\ V=V(t,t,x,v)=v,\quad 0\leq s\leq t.
 \end{cases}
\end{equation}
Then define the associated charge density and current density
\begin{align*}
\rho^{(n)}(t,x)=\int_{\R^3}f^{(n)}(t,x,v)dv,\quad
j^{(n)}(t,x)=\int_{\R^3}\hat{v}f^{(n)}(t,x,v)dv.
\end{align*}
These density functions then lead to the new Maxwell fields
 $(E^{(n)}$, $B^{(n)} )$ as solutions of
 \begin{align*}
 &\partial_tE^{(n)}-\text{curl}B^{(n)}=-4\pi j^{(n)},\quad \text{div}E^{(n)}=4\pi \rho^{(n)},\\
 &\partial_tB^{(n)}+\text{curl}E^{(n)}=0,\phantom{-\pi j^{(n)}}\quad \text{div}B^{(n)}=0,\\
 & E^{(n)}(0,x)=E_0(x),\quad B^{(n)}(0,x)=B_0(x).
\end{align*}
The key observation of Glassey-Strauss is that the Maxwell field can be expressed as integrals of the density distribution and the convergence of this iteration sequence is guaranteed by the uniform boundedness of the kinetic energy density.
\begin{thm}[Glassey-Strauss \cite{Glassey89:3DVM:largevelocity}]
\label{thm:strauss89}
For each $T>0$, if the kinetic energy density verifies the condition
 $$\int_{\R^3}|{v}|f^{(n)}(t,x,v)dv\leq C,\quad \forall 0\leq t\leq T,\quad \forall n$$
 for some constant $C>0$, then the above iteration sequence $(E^{(n)}, B^{(n)}, f^{(n)})$ converges to a solution $(E, B, f)$ of the nonlinear system RVM as $n\rightarrow \infty$.
\end{thm}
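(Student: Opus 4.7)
The plan is to exploit the Glassey--Strauss representation formula for $(E^{(n)}, B^{(n)})$, which expresses each field at $(t,x)$ as the sum of three pieces: (i) the free Maxwell evolution of $(E_0, B_0)$; (ii) a bulk integral $E^{(n)}_T + B^{(n)}_T$ over the backward light cone $\{(s, y) : |y-x| = t - s,\ 0\leq s\leq t\}$ of a $(\omega, \hat v)$-dependent kernel of size $|y-x|^{-2}$ applied to the density $f^{(n)}(s, y, v)$; and (iii) a boundary/data term $E^{(n)}_S + B^{(n)}_S$ in which the problematic $v$-derivatives of $f^{(n)}$ have been integrated by parts and converted, via the Vlasov equation satisfied by $f^{(n)}$, into $t,x$-derivatives along the free-streaming characteristics, at the cost of producing extra factors of $(E^{(n-1)}, B^{(n-1)})$. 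The entire argument reduces the nonlinear sequence to estimating these cone integrals.

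Under the uniform hypothesis $\int |v| f^{(n)}(t, x, v)\, dv \leq C$, each integrand in (ii)--(iii) is pointwise bounded by a quantity of size $|y-x|^{-2}$ times a velocity moment of $f^{(n)}$, and integration against the cone measure $|y-x|^2 \, d\omega \, d|y-x|$ yields $\|E^{(n)}\|_{L^\infty_x} + \|B^{(n)}\|_{L^\infty_x} \leq C_T$ uniformly in $n$ and $t \in [0,T]$. With the fields uniformly bounded, the characteristic ODE system \eqref{eq:cha} gives $|V^{(n)}(s, t, x, v) - v| \leq C_T(t-s)$, so $f^{(n)}$ inherits the pointwise bound of $f_0$ along a Lipschitz change of variables, and $\rho^{(n)}, j^{(n)}$ are uniformly controlled. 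A second pass, applying $\nabla_{t,x}$ to the representation formula and using the Vlasov equation once more to trade a derivative of $f^{(n)}$ for a derivative of the kernel plus bounded field times $\nabla_v f^{(n)}$, delivers uniform bounds on $\nabla_{t,x}(E^{(n)}, B^{(n)})$, and in turn $C^1$ control of $f^{(n)}$ via the characteristic system.

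With these uniform $C^1$ bounds on the sequence, the Cauchy property follows by subtraction. Setting $\mathcal{E}^{(n)} = E^{(n)} - E^{(n-1)}$, $\mathcal{B}^{(n)} = B^{(n)} - B^{(n-1)}$, $g^{(n)} = f^{(n)} - f^{(n-1)}$, the difference of Vlasov equations shows that $g^{(n)}$ is driven by $(\mathcal{E}^{(n-1)}, \mathcal{B}^{(n-1)}) \cdot \nabla_v f^{(n-1)}$, and substitution into the representation formula produces a schematic estimate
\begin{equation*}
\|(\mathcal{E}^{(n+1)}, \mathcal{B}^{(n+1)})(t)\|_{L^\infty_x} \leq C_T \int_0^t \|(\mathcal{E}^{(n)}, \mathcal{B}^{(n)})(s)\|_{L^\infty_x}\, ds,
\end{equation*}
which iterates to a factorial bound $(C_T T)^n / n!$ and gives uniform convergence on $[0, T] \times \R^3$; the uniform $C^1$ bound upgrades it to $C^1$ convergence so the limit $(E, B, f)$ solves the RVM system classically.

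The main obstacle is the uniform bound on $\nabla_{t,x}(E^{(n)}, B^{(n)})$: naive differentiation of the representation formula loses a $v$-derivative on $f^{(n)}$, which is not controlled by the hypothesis. Resolving this requires the Glassey--Strauss identity that rewrites $\nabla_v f^{(n)}$ as a derivative along the Vlasov flow modulo a factor of the already-bounded field $(E^{(n-1)}, B^{(n-1)})$ — this is the step where the uniform kinetic-energy-density bound is used decisively, and it is the only place the specific structure of the RVM system (rather than a general Vlasov--field coupling) enters.
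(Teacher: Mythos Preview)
The paper does not supply its own proof of this theorem: it is quoted as a black-box result of Glassey--Strauss and attributed to \cite{Glassey89:3DVM:largevelocity}, with no argument given beyond the citation. So there is nothing in the paper to compare your proposal against.

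That said, a few remarks on your sketch relative to the original Glassey--Strauss argument. Your labeling of the decomposition is off: in the representation formula \eqref{eq:def:electric} the data/boundary piece is $E_z^*$, while $E_S^*$ is not a boundary term but a second bulk cone integral with kernel $|y-x|^{-1}$ carrying an explicit factor of the \emph{previous} field $(E^{(n-1)}+\hat v\times B^{(n-1)})$. This matters because your ``single pass'' estimate of (ii)--(iii) by $|y-x|^{-2}$ times a moment of $f^{(n)}$ does not close: the $E_S^*$ integrand already contains $K^{(n-1)}$, so uniform bounds on $K^{(n)}$ require an induction on $n$ (or a Gronwall in time), not a direct estimate. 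Your final paragraph correctly identifies the genuine obstacle---uniform control of $\nabla_{t,x}K^{(n)}$---and the resolution is exactly the Glassey--Strauss trick of writing $\partial_{x_l}$ as a combination of cone-tangential operators $T_j=-\omega_j\partial_t+\partial_{x_j}$ and the streaming operator $S=\partial_t+\hat v\cdot\nabla_x$, then using the Vlasov equation on $Sf^{(n)}$; this is spelled out around \eqref{ETkIl}--\eqref{ATSkI} in the paper for a different purpose. With those corrections the outline matches the cited proof.
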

 From this theorem, to construct a solution of RVM, it is important to control the density distribution, which relies on the characteristics. This requires that the Maxwell field lies in certain function space.
 Abbreviating the Maxwell field as $K(t,x)=(E(t,x),B(t,x))$, define the norm
\begin{align*}
&\|K\|_0=\sup_{x,t}(\left|t-|x|\right|+1)(t+|x|+1)(|E(t,x)|+|B(t,x)|).
\end{align*}
The decay factor is consistent with the decay property of the linear Maxwell field, which plays the role that the characteristics will not diverge too far from the linear ones. To ensure that the sequence of Maxwell field $(E^{(n)}, B^{(n)})$ lies in the same function space, we also need higher order regularity. For this purpose, define the norm
\begin{align*}
&\|K\|_{\gamma}=\sup_{|x|\leq |y|\leq t}(t-|y|+1)^{\gamma}(t+1)
\frac{|E(t,x)-E(t,y)|+|B(t,x)-E(t,y)|}{|x-y|+1},\quad \gamma>1.
\end{align*}
Compared to the previous works, we emphasize here that this norm is only used in the interior region $\{|x|\leq t\}$ and the $\|K\|_0$ norm is sufficient to close the argument in the exterior region $\{|x|\geq t\}$. Moreover, we note that the $\|K\|_{\gamma}$ norm is weaker than the weighted Lipschitz norm or the weighted $C^1$ norm used in the previous works. This allows us to improve the smallness assumption on the initial density distribution from $C^1$ norm to $C^0$ norm.

Choose constants $\a, \b$ such that
$$0<\alpha<\min\{\frac{1}{6}\beta,\frac{1}{2}(q-9)\beta\},\quad 0<\b<1,$$
where $q>9$ is the constant recording the decay rate of the initial density distribution.
Define
\begin{equation}
\label{eq:def4:Knorm}
\|K\|=\|K\|_0+\|K\|_{1+\alpha}.
\end{equation}
We will work with Maxwell fields in the following function space
\begin{align*}
&\mathcal{K}_{\Lambda}\equiv\{K|K\in C^1(\mathbb{R}^{1+3}),\quad \|K\|_0\leq \Lambda,\quad \|K\|_{1+\alpha}\leq \Lambda^{2}\}
\end{align*}
for some large constant $\Lambda$ depending linearly on the size $M$ of the initial Maxwell field. With such Maxwell field, we first obtain improved bounds on the associated characteristics (equation \eqref{eq:cha}) both on the spatial position $X$ and the relativistic speed $\hat{V}$. This gives a rough pointwise decay estimate for the density distribution. For rapid decaying data as shown by Schaeffer, this estimate is sufficient to bound the kinetic energy density as well as those source terms for the Maxwell equations (see the representation formula \eqref{eq:def:electric}). To improve the assumption on the initial data, it requires refined estimates on the velocity in terms of the initial density distribution. The key new observation of this work is to dyadicly decompose the initial density distribution
\begin{align*}
f_0= \sum\limits_{k=1}^{\infty} f_{0, k},\quad f_{0, k}= f_0 \psi_{k-1}
\end{align*}
with $\psi_{k}$ smooth and supported on $\{|x|+|v|\leq 2^{k+1}\}$. This enables us to derive refined estimate on each component $f_{[k]}^{(n)}$ (solving the Vlasov equation with data $f_{0, k}$). The decay assumption on the initial density distribution (the rate $q>9$) is required so that all the pieces $f_{[k]}^{(n)}$ as well as the associated electromagnetic fields under this decomposition are summable in terms of $k$.

The bound on the Maxwell field becomes standard once we have understood the density distribution. However, since the Maxwell field is large, we split the full Maxwell field into the linear part and the perturbation part. Define the linear Maxwell field
\begin{align}
\label{defE}
&\mathcal{E}(t,x)=\frac{1}{4\pi t^2}\int_{|y-x|= t}[E_0(y)+((y-x)\cdot\nabla)E_0(y)+t\text{curl}B_0(y))dS_y,\\
 \notag
 &\mathcal{B}(t,x)=\frac{1}{4\pi t^2}\int_{|y-x|= t}[B_0(y)+((y-x)\cdot\nabla)B_0(y)-t\text{curl}E_0(y))dS_y.
\end{align}
We first show that the $K$-norm $\|(\mathcal{E}, \mathcal{B})\|$ is bounded by a constant multiple of the initial size $M$. Then to show that the Maxwell field $(E^{(n)}, B^{(n)})$ lies in the space $\mathcal{K}_{\Lambda}$, it suffices to demonstrate that the full Maxwell field is close to the linear part $(\mathcal{E}, \mathcal{B})$ when the initial density distribution is sufficiently small.

The plan of the paper is as follows: In section 2, we bound the characteristics associated to Maxwell field in $\mathcal{K}_{\Lambda}$. Then in section 3, we decompose the initial density distribution and derive necessary estimates on the averaged density. With these preparations, we carry out estimates on the Maxwell fields in Section 4. The last section is devoted to proof of the main theorem.

\textbf{Acknowledgments.} The author would like to thank G. Rein for enlightening and helpful discussions. S. Yang is partially supported by NSFC-11701017.

\section{Improved bounds on the characteristics}
To insure the convergence of the iteration sequence, it is important to bound the kinetic energy density, which relies on the behavior of the characteristics. The aim of this section is to control the characteristics associated to given Maxwell field with finite $K$-norm. We begin with a Lemma controlling the evolution of the velocity.

For notational ease, in the following, we take $K=(E,B)=(E^{(n-1)},B^{(n-1)})$, $f=f^{(n)}$ and $K^*=(E^*,B^*)=(E^{(n)},B^{(n)}).$
\def\va{\varphi_0}\def\vb{\varphi_1}
\begin{Lem}
\label{lem:imbd:cha}
Let $K=(E, B)$ such that $\|K\|<+\infty$. Let $(X, V)$ be the associated characteristic solving the ODE \eqref{eq:cha} on the fixed time interval $[0, T^*]$.
Then for $0\leq s\leq t\leq T^*$, we have
\begin{align*}
 |V(t)|+ \|K\|_0\ln(2+ \|K\|_0)&\leq C(|V(s)|+ \|K\|_0\ln(2+ \|K\|_0)),\\|V(s)|+ \|K\|_0\ln(2+ \|K\|_0)&\leq C(|V(t)|+ \|K\|_0\ln(2+ \|K\|_0))
\end{align*}
for some constant $C$ independent of $T^*$, $t$, $s$.
\end{Lem}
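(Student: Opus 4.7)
The plan is to convert the Lorentz ODE along the characteristic into an integral inequality on $|V|$, estimate the resulting integral by $C\ln(2+\sup|V|)$, and close by a scalar bootstrap.

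\textbf{Step 1 (pointwise ODE bound).} Write $r(\sigma)=|X(\sigma)|$ and introduce the null coordinates $u(\sigma)=\sigma-r(\sigma)$, $v(\sigma)=\sigma+r(\sigma)$. Since $V\cdot(\hat V\times B)=0$, the ODE~\eqref{eq:cha} gives $\partial_\sigma\sqrt{1+|V|^2}=\hat V\cdot E$, and hence $|\partial_\sigma|V||\le|E|$. The definition of $\|K\|_0$ then yields
\[\big|\partial_\sigma|V(\sigma)|\big|\le\frac{\|K\|_0}{(1+|u(\sigma)|)(1+v(\sigma))}.\]

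\textbf{Step 2 (key integral estimate).} Set $M=\sup_{\sigma\in[s,t]}|V(\sigma)|$. The heart of the argument is
\[\int_s^t\frac{d\sigma}{(1+|u|)(1+v)}\le C\ln(2+M).\]
The crucial identities are $u'=1-\hat V\cdot\hat X>0$, $v'=1+\hat V\cdot\hat X>0$, $u'+v'=2$, and $u'v'\ge 1-|\hat V|^2\ge 1/(1+M^2)$. I partition $[s,t]$ into \emph{outgoing} subintervals $\{\hat V\cdot\hat X\ge 0\}$, where $v'\ge 1$, and \emph{incoming} subintervals $\{\hat V\cdot\hat X<0\}$, where $u'\ge 1$. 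On an outgoing subinterval, change variables from $\sigma$ to $v$: since $du/dv=(1-\hat V\cdot\hat X)/(1+\hat V\cdot\hat X)\ge 1/[4(1+M^2)]=:a$, on any subarc with $u\ge 0$ one has $u(v)\ge u_0+a(v-v_0)$, and a direct partial-fraction computation
\[\int_0^\infty\frac{dw}{(1+aw)(1+v_0+w)}\le C\ln(1/a)+C\le C'\ln(2+M)\]
gives the claim. A neighborhood of any sign change $u=0$ has $v$-length $\lesssim v_0/a$ and contributes at most $C\ln(2+M)$ via the crude bound $(1+|u|)(1+v)\ge 1+v$. Incoming subintervals are treated symmetrically by parametrizing in $u$.

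\textbf{Step 3 (bootstrap).} Combining Steps 1--2 on every subinterval $[s,\sigma]\subset[s,t]$ gives $|V(\sigma)|\le|V(s)|+C\|K\|_0\ln(2+M)$, and taking the supremum,
\[M\le|V(s)|+C\|K\|_0\ln(2+M).\]
If $M\le 2|V(s)|$ the desired bound is immediate; otherwise $M/2\le C\|K\|_0\ln(2+M)$, and since $y/\ln(2+y)$ is increasing and unbounded this forces $M\le C_1\|K\|_0\ln(2+\|K\|_0)$. Either way,
\[M\le C\big(|V(s)|+\|K\|_0\ln(2+\|K\|_0)\big),\]
which yields the first stated inequality; exchanging the roles of $s$ and $t$ in the same bootstrap (both are endpoints of the same interval) gives the second.

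\textbf{Main obstacle.} The technical heart is Step 2. The naive bound $(1+|u|)(1+v)\ge 1+\sigma$ gives only $\ln(1+T^*)$, which grows with the time interval and is unusable. The improvement exploits the algebraic constraint $u'+v'=2$ together with $u'v'\gtrsim 1/(1+M^2)$: after changing variables, $|u|$ grows linearly in $v$ at rate $\gtrsim 1/(1+M^2)$, producing a bound that depends on $M$ but not on $T^*$. An additional subtlety is the possible sign change of $u$ along the light cone $u=0$, where neither $|u|$ nor $v$ individually controls the integrand and the transition region must be handled separately.
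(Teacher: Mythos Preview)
Your null-coordinate framework is sound and the bootstrap in Step~3 is fine, but Step~2 as written has a real gap. You partition $[s,t]$ into outgoing and incoming subintervals according to the sign of $\hat V\cdot\hat X$, and on each outgoing subinterval you bound the $v$-integral by extending it to $\int_0^\infty\frac{dw}{(1+aw)(1+v_0+w)}\le C\ln(2+M)$. But the sign of $\hat V\cdot\hat X$ is not a priori controlled along the characteristic, so there is no bound on the \emph{number} of such subintervals; summing your per-subinterval estimate yields $N\cdot C\ln(2+M)$, which is useless. The fix is not hard: since $v'>0$ throughout, the $v$-images of all outgoing subintervals are pairwise disjoint subsets of $[v(s),v(t)]$, and the bound $du/dv\ge a$ in fact holds \emph{globally} (not just on outgoing pieces), so $u$ is a monotone function of $v$ on all of $[v(s),v(t)]$ with $|u(v)|\ge a|v-v_0|$ for the unique zero-crossing $v_0$ of $u$. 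One then bounds the total outgoing contribution by a single integral $\int_0^\infty\frac{dv}{(1+a|v-v_0|)(1+v)}\le C\ln(1/a)$, and symmetrically for the incoming part. You should make this summation argument explicit.

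For comparison, the paper takes a more direct route that avoids the partition entirely. It uses the crude bound $|X(\sigma)|\le |X(0)|+\sigma\hat U$ (with $\hat U=\sup|\hat V|$) to obtain $\sigma-|X(\sigma)|\ge(1-\hat U)\sigma-|X(0)|$, and then integrates in $\sigma$ over the three explicit regions $[0,R_1]$, $[R_1,R_2]$, $[R_2,\infty)$ with $R_1=|X(0)|$ and $R_2=R_1/(1-\hat U)$. The outcome is the closed-form bound $|V(t_1)-V(t_2)|\le 2\|K\|_0\bigl(1-\ln(1-\hat U)\bigr)$, after which the identity $-\ln(1-\hat U)=\ln(1+\hat U)+\ln(1+U^2)$ and the elementary inequality $A\ln(1+U^2)\le A\ln(1+(4A)^2)+\tfrac12 U$ close the bootstrap. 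Your approach is more geometric and treats $s$ and $t$ symmetrically, while the paper's is shorter and sidesteps the subinterval-counting issue altogether by anchoring everything at $\sigma=0$.
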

\begin{proof}
Fixed $(T^*, x, v)\in \mathbb{R}^{1+3+3}$. Define
\begin{align*}
&U(t)=\sup\limits_{0\leq s\leq t}\{|V(s)|\},\quad \hat{U}(t)=U(t)/\sqrt{1+U^2(t)}.
\end{align*}
In view of the characteristic equations \eqref{eq:cha}, we derive that
\begin{align*}
&|X(s)-X(0)|\leq s\hat{U}(t)\leq s,
\end{align*}
from which we conclude that
\begin{align*}
  &|X(s)|\leq |X(0)|+s\hat{U}(t),\quad |X(s)|+s\geq  \max(|X(0)|,s),\\
  &\left|s-|X(s)|\right|\geq \max (s-|X(s)|, 0)\geq   \max(s-|X(0)|-s\hat{U}(t),0)
\end{align*}
for $ 0\leq s\leq t.$  Let $$R_1=|X(0)|,\quad R_2=|X(0)|/(1-\hat{U}(t)).$$
Hence for $t_1,t_2\in[0,t],$ we can show that
\begin{align*}
 |V(t_1)-V(t_2)|&\leq \int_0^t\frac{\|K\|_0ds}{(\left|s-|X(s)|\right|+1)(s+|X(s)|+1)}\\
 &\leq\int_0^t\frac{\|K\|_0ds}{(\max(s-|X(s)|,0)+1)(\max(|X(0)|,s)+1)}
\\
&\leq\int_0^{R_1}\frac{\|K\|_0ds}{|X(0)|+1}+\int_{R_1}^{R_2}\frac{\|K\|_0ds}{s+1}
+\int_{R_2}^{+\infty}\frac{\|K\|_0}{(s-|X(0)|-s\hat{U}(t)+1)(s+1)}
\\
&=\frac{|X(0)|\|K\|_0}{|X(0)|+1}+{\|K\|_0}\ln\frac{R_2+1}{R_1+1}+\|K\|_0{\va(|X(0)|-\hat{U}(t))},
\end{align*}
where
\begin{align*}
\va(z)=(1/z)\ln(1+z),\quad \va(0)=1,\quad -1<z<\infty.
\end{align*}
In particular $ \va$ is decreasing and
 \begin{align*}
 \va(-z)=\sum_{n=0}^{+\infty}\frac{z^n}{n+1}\leq 1+\sum_{n=1}^{+\infty}\frac{z^n}{n}=1-\ln(1-z),\quad 0\leq z<1.
 \end{align*}
 This implies that
 \begin{align*}
 \va(|X(0)|-\hat{U}(t))\leq \va(-\hat{U}(t))\leq 1-\ln(1-\hat{U}(t)).
 \end{align*}
 On the other hand, recall that
 \begin{align*}
 R_2=R_1/(1-\hat{U}(t))\geq 0, \quad  (R_2+1)/(R_1+1)\leq 1/(1-\hat{U}(t)).
 \end{align*}
 We therefore can further bound that
 \begin{align*}
 |V(t_1)-V(t_2)|&\leq \|K\|_0+{\|K\|_0}\ln[1/(1-\hat{U}(t))]+\|K\|_0[1-\ln(1-\hat{U}(t))]\\&=2\|K\|_0[1-\ln(1-\hat{U}(t))]\\&=2\|K\|_0[1+\ln(1+\hat{U}(t))+\ln(1+{U}^2(t))]
\\&\leq 2\|K\|_0[1+\ln 2+\ln(1+(8\|K\|_0)^2)]+\f12 U(t).
\end{align*}
Here we used the inequality
 $$A\ln(1+{U}^2)\leq A\ln(1+{(4A)}^2)+\f12 U,\quad \forall A, U\geq 0,$$
 which is trivial when $0\leq U\leq 4A$ and otherwise
follows from
\begin{align*}
A\ln(1+{U}^2)-A\ln(1+{(4A)}^2)&=\int_{4A}^U\frac{2Ax}{1+x^2}dx \leq \int_{4A}^U\frac{1}{2}dx\leq \f12 U.
\end{align*}
In particular for all $t_1,t_2\in[0,t]$, we have
\begin{align*}
&|V(t_1)|\leq|V(t_2)|+ 2\|K\|_0(2+\ln(1+(8\|K\|_0)^2))+ \f12 U(t).
\end{align*}
By the definition of the function $U(t)$, we therefore derive that
\begin{align*}
&U(t)\leq|V(s)|+ 2\|K\|_0(2+\ln(1+(8\|K\|_0)^2))+\f12 U(t),\quad \forall 0\leq s\leq t,
\end{align*}
from which we conclude that
\begin{align*}
U(t)&\leq2|V(s)|+ 4\|K\|_0(2+\ln(1+(8\|K\|_0)^2))\\&\leq C(|V(s)|+ \|K\|_0\ln(2+ \|K\|_0)),
\quad \forall\ s\in[0,t]
\end{align*}
for some constant $C$ independent of $T_*$, $t$, $s$. This implies that
\begin{align*}
|V(t)|+ \|K\|_0\ln(2+ \|K\|_0)&\leq U(t)+ \|K\|_0\ln(2+ \|K\|_0)
\leq C(|V(s)|+ \|K\|_0\ln(2+ \|K\|_0)),\\
|V(s)|+ \|K\|_0\ln(2+ \|K\|_0)&\leq (U(t)+ \|K\|_0\ln(2+ \|K\|_0))
\leq C(|V(t)|+ \|K\|_0\ln(2+ \|K\|_0)).
\end{align*}
We thus completed the proof for the Lemma.
\end{proof}
A direct consequence of this lemma is that the density distribution possesses the following pointwise decay estimate.
\begin{Prop}
\label{prop:bd4f}
For $K\in \mathcal{K}_{\Lambda}$ with $\Lambda > 2$, we have the decay estimate for the density distribution
$$f(t,x,v)\leq C(\Lambda)\varepsilon_0(1+|v|)^{-q}$$
for some constant $C(\Lambda) $ depending only on $\La$. Here the initial data $f_0=f(0, x, v)$ verifies the decay assumption of the main theorem.
\end{Prop}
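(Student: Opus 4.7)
The statement is essentially an immediate corollary of Lemma \ref{lem:imbd:cha}, so my plan is mostly bookkeeping.

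First, by the method of characteristics (displayed right before equation \eqref{eq:cha}),
\[
f(t,x,v)=f_0(X(0,t,x,v),V(0,t,x,v)),
\]
and the hypothesis on the initial data gives
\[
f(t,x,v)\leq \varepsilon_0\bigl(1+|X(0,t,x,v)|+|V(0,t,x,v)|\bigr)^{-q}\leq \varepsilon_0\bigl(1+|V(0)|\bigr)^{-q}.
\]
So it suffices to show $1+|V(0)|\geq c(\Lambda)(1+|v|)$ for some positive $c(\Lambda)$.

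Next I would apply Lemma \ref{lem:imbd:cha} with $s=0$ and the endpoint $V(t)=v$. Setting $C_\Lambda=\|K\|_0\ln(2+\|K\|_0)$, which is bounded by $\Lambda\ln(2+\Lambda)$ since $K\in\mathcal{K}_\Lambda$, the second inequality of the lemma becomes
\[
|v|+C_\Lambda\leq C\bigl(|V(0)|+C_\Lambda\bigr),
\]
so $|V(0)|\geq |v|/C-C_\Lambda$ (after absorbing the $(C-1)/C$ factor). Hence
\[
1+|V(0)|\geq \max\bigl(1,\,|v|/C-C_\Lambda\bigr).
\]

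Now I would split into two cases depending on whether $|v|$ is large or small relative to $C_\Lambda$. If $|v|\geq 2CC_\Lambda$, then $|v|/C-C_\Lambda\geq |v|/(2C)$, which yields $1+|V(0)|\geq (1+|v|)/(4C)$ after a routine comparison. If instead $|v|\leq 2CC_\Lambda$, then $1+|v|\leq 1+2CC_\Lambda$, so the trivial bound $1+|V(0)|\geq 1$ gives $1+|V(0)|\geq (1+|v|)/(1+2CC_\Lambda)$. In either case we obtain $1+|V(0)|\geq c(\Lambda)(1+|v|)$, and therefore
\[
f(t,x,v)\leq \varepsilon_0\bigl(1+|V(0)|\bigr)^{-q}\leq c(\Lambda)^{-q}\varepsilon_0(1+|v|)^{-q}=C(\Lambda)\varepsilon_0(1+|v|)^{-q},
\]
which is the desired estimate.

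There is no real obstacle here beyond being careful with the dependence of constants on $\Lambda$ through the term $C_\Lambda=\|K\|_0\ln(2+\|K\|_0)$; all the heavy lifting (i.e.\ the logarithmic growth of $|V|$ along characteristics under a field with $\|K\|_0$ finite) has already been done in Lemma \ref{lem:imbd:cha}.
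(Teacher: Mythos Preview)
Your proof is correct and follows essentially the same route as the paper: invoke the characteristic representation, drop the $|X(0)|$ term from the initial-data decay, and use Lemma~\ref{lem:imbd:cha} to compare $|v|=|V(t)|$ with $|V(0)|$. One small slip: the inequality $|v|+C_\Lambda\leq C(|V(0)|+C_\Lambda)$ is the \emph{first} displayed inequality of Lemma~\ref{lem:imbd:cha} (with $s=0$), not the second; and the paper avoids your case split by observing in one line that $1+|v|\leq C(1+|V(0)|+\Lambda\ln(2+\Lambda))\leq C\Lambda\ln\Lambda\,(1+|V(0)|)$, which gives the explicit choice $C(\Lambda)=(C\Lambda\ln\Lambda)^q$.
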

\begin{proof}
In view of the definition for the characteristic, we show that
\begin{align*}
f(t,x,v)&=f_0(X(0,t,x,v),V(0,t,x,v))\\
&\leq \varepsilon_0(1+|V(0,t,x,v)|)^{-q}, \quad \forall t\geq 0, \quad x\in \mathbb{R}^3.
\end{align*}
The above Lemma \ref{lem:imbd:cha} with $T_*=t$ implies that
\begin{align*}
1+|v|&=1+|V(t,t,x,v)|\\
&\leq C(1+|V(0,t,x,v)|+\|K\|_0\ln(2+ \|K\|_0))\\
&\leq C(1+|V(0,t,x,v)|+\Lambda\ln(2+ \Lambda))\\
&\leq C\Lambda\ln\Lambda(1+|V(0,t,x,v)|).
\end{align*}
The decay estimate for the proposition then follows by setting
$C(\Lambda)=(C\Lambda\ln\Lambda)^q$.
\end{proof}

The previous Lemma \ref{lem:imbd:cha} can be viewed as estimate on the velocity along the characteristics. As our data are no longer compactly supported, we also need refined estimate on the space position along the characteristics.
\begin{Lem}
\label{lem:imbd:cha:x}
Let $K=(E, B)$ such that $\|K\|<+\infty$. For $(T_*, x, v)\in \mathbb{R}^{1+6}$, let $(X, V)$ be the associated characteristics. Then we have the following bound
\begin{align*}
&|X(t)-X(0)-t\hat{V}(t)|\leq C \La (\ln(1+t)+\ln(1+|X(0)|)),\quad \forall 0\leq t\leq T_*
\end{align*}
for some constant $C$. Here $ \Lambda=|V(0)|+(1+ \|K\|_0\ln(2+ \|K\|_0))^2$.
\end{Lem}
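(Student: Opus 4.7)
The plan is to reduce $X(t)-X(0)-t\hat V(t)$ to a weighted time integral of $\partial_\tau \hat V$, bound the integrand pointwise by the electromagnetic field, and then convert the resulting space-time integral into a logarithm by a change of variable along the null coordinate $\tau - |X(\tau)|$. A Fubini swap gives
\begin{align*}
X(t)-X(0)-t\hat V(t) = \int_0^t (\hat V(\tau)-\hat V(t))\, d\tau = -\int_0^t \tau\, \partial_\tau \hat V(\tau)\, d\tau.
\end{align*}
From the characteristic equation $\partial_\tau V = E + \hat V \times B$ and the identity $\partial_\tau \hat V = (\partial_\tau V - \hat V(\hat V\cdot\partial_\tau V))/\sqrt{1+|V|^2}$, one has $|\partial_\tau \hat V|\leq |E|+|B|$, so the $\|K\|_0$ bound yields
\begin{align*}
|\partial_\tau \hat V(\tau)|\leq \frac{\|K\|_0}{(|\tau - |X(\tau)||+1)(\tau + |X(\tau)|+1)}.
\end{align*}

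Because $\tau \leq \tau+|X(\tau)|+1$, the weight $\tau$ cancels one factor in the denominator, reducing the task to estimating
\begin{align*}
\int_0^t \frac{d\tau}{|\tau - |X(\tau)||+1}.
\end{align*}
For this I would introduce the null coordinate $g(\tau) := \tau - |X(\tau)|$, for which $g'(\tau) = 1 - (X(\tau)/|X(\tau)|)\cdot \hat V(\tau) \geq 1-\hat U > 0$ a.e., where $\hat U := \sup_{s\in[0,t]} |\hat V(s)|$. Hence $g$ is a Lipschitz bijection of $[0,t]$ onto $[-|X(0)|,\, t-|X(t)|]$, and substituting gives
\begin{align*}
\int_0^t \frac{d\tau}{|g(\tau)|+1} \leq \frac{1}{1-\hat U}\int_{-|X(0)|}^{t-|X(t)|}\frac{dg}{|g|+1} \leq \frac{\ln(1+|X(0)|) + \ln(1 + |t - |X(t)||)}{1-\hat U}.
\end{align*}
The endpoint bound $|t-|X(t)|| \leq t + |X(0)|$ then produces the logarithmic factor $\ln(1+t)+\ln(1+|X(0)|)$ claimed in the lemma.

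To conclude I would invoke Lemma \ref{lem:imbd:cha}, which bounds $U := \sup_{[0,t]}|V|$ by $C(|V(0)| + \|K\|_0\ln(2+\|K\|_0))$, together with the algebraic identity $1-\hat U = (\sqrt{1+U^2}(\sqrt{1+U^2}+U))^{-1}$ giving $1/(1-\hat U) \leq C(1+U^2)$. Combining with the definition $\Lambda = |V(0)| + (1+\|K\|_0\ln(2+\|K\|_0))^2$, the factor $\|K\|_0/(1-\hat U)$ is absorbed into $C\Lambda$ and the bound in the lemma follows.

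I expect the main obstacle to be this final accounting. The change of variable and the velocity bound from Lemma \ref{lem:imbd:cha} produce $\|K\|_0/(1-\hat U)$ of order $\|K\|_0(1+|V(0)|^2 + (\|K\|_0\ln(2+\|K\|_0))^2)$, and matching this against the asymmetric expression $\Lambda$ (linear in $|V(0)|$, quadratic in the logarithmic $\|K\|_0$ term) requires careful bookkeeping -- this is exactly the reason for the unusual power structure in the definition of $\Lambda$. The geometric heart of the argument, namely the monotonicity of the null coordinate $g$ along the characteristic, is where the sup bound on $\hat V$ from Lemma \ref{lem:imbd:cha} enters essentially by keeping the Jacobian $1/(1-\hat U)$ finite.
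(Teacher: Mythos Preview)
Your overall strategy matches the paper's: write $X(t)-X(0)-t\hat V(t) = -\int_0^t \tau\,\partial_\tau\hat V\,d\tau$, bound $|\partial_\tau\hat V|$ via the field, and reduce the resulting integral to a logarithm by the substitution $g(\tau)=\tau-|X(\tau)|$ with Jacobian controlled by $1/(1-\hat U)$. However, there is a genuine gap in the final accounting, and it is not merely bookkeeping.

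You bound $|\partial_\tau\hat V|\le |E|+|B|$, discarding the factor $1/\sqrt{1+|V|^2}$ that your own identity already produces. The paper keeps it: with $\underline U:=\inf_{[0,t]}|V|$ one has $|\partial_\tau\hat V|\le (1+\underline U^{\,2})^{-1/2}(|E|+|B|)$, and after the change of variable the controlling factor becomes $\|K\|_0\,(1+U^2)\,(1+\underline U^{\,2})^{-1/2}$ rather than your $\|K\|_0\,(1+U^2)$. Without this extra factor your expression is of order $\|K\|_0\bigl(1+|V(0)|^2+(\|K\|_0\ln(2+\|K\|_0))^2\bigr)$, and the $|V(0)|^2$ term cannot be dominated by $\Lambda$, which is only \emph{linear} in $|V(0)|$: take $\|K\|_0$ fixed and $|V(0)|\to\infty$ to see the mismatch. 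The paper's resolution is a dichotomy on $\underline U$: if $\underline U\ge \|K\|_0\ln(2+\|K\|_0)$ then Lemma~\ref{lem:imbd:cha} forces $U\le C\,\underline U$, so $(1+U^2)(1+\underline U^{\,2})^{-1/2}\le C(1+\underline U)\le C(1+|V(0)|)$; otherwise $U\le C\,\|K\|_0\ln(2+\|K\|_0)$ and $(1+U^2)\le C(1+\|K\|_0\ln(2+\|K\|_0))^2$. Either branch lands on $C\Lambda$, which is exactly why $\Lambda$ carries the asymmetric shape you noticed. The missing ingredient, the infimum $\underline U$, enters only through the $\sqrt{1-|\hat V|^2}$ factor you dropped.
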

\begin{proof}
Similarly we define
\begin{align*}
&U(t)=\sup\{|V(s)|:0\leq s\leq t\},\quad \underline{U}(t)=\inf\{|V(s)|:0\leq s\leq t\}.
\end{align*}
Then we have  $\underline{U}(t)\leq |V(0)| $ and
\begin{align*}
 \sqrt{1-|\hat{V}(s)|^2}=(1+|V(s)|^2)^{-1/2}\leq(1+{\underline{U}}^2(t))^{-1/2}
\end{align*}
for $ 0\leq s\leq t.$ Note that
 \begin{align*}
 \frac{d\hat{V}}{ds}=J(s,X(s),\hat{V}(s))
\end{align*}
with
\begin{align}
\label{defJ}
 J(t,x,\hat{v})=\sqrt{1-|\hat{v}|^2}(E(t,x)+\hat{v}\times B(t,x)-\hat{v}\cdot E(t,x)\hat{v}).
\end{align}
In particular we have
\begin{align*}
|J(t,x,\hat{v})|\leq\sqrt{1-|\hat{v}|^2}(|E(t,x)|+|B(t,x)|),
\end{align*}
from which together with the definition of the norm $\|\cdot \|_0$ we derive that
\begin{align*}
 \Big|\frac{d\hat{V}}{ds}\Big| & \leq\sqrt{1-|\hat{V}|^2}(|E(s,X)|+|B(s,X)|)\\
 &\leq\frac{(1+{\underline{U}}^2(t))^{-1/2}\|K\|_0}{(\left|s-|X(s)|\right|+1)(s+|X(s)|+1)},\quad \forall 0\leq s\leq t.
\end{align*}
Now we compute that
\begin{align*}
\frac{d}{ds}(X-s\hat{V})=-s\frac{d\hat{V}}{ds},\quad \Big|\frac{d|X|}{ds}\Big|\leq \Big|\frac{dX}{ds}\Big|=|\hat{V}(s)|\leq\hat{U}(t).
\end{align*}
We then can show that
\begin{align*}
|X(t)-X(0)-t\hat{V}(t)|&\leq \int_0^ts\Big|\frac{d\hat{V}}{ds}\Big|ds\\
&\leq \|K\|_0\int_0^t\frac{s(1+{\underline{U}}^2(t))^{-1/2}}{(\left|s-|X(s)|\right|+1)(s+|X(s)|+1)}ds\\
&\leq \|K\|_0 \int_0^t\frac{(1+{\underline{U}}^2(t))^{-1/2}}{\left|s-|X(s)|\right|+1}ds\\
&\leq \|K\|_0 \frac{(1+{\underline{U}}^2(t))^{-1/2}}{1-\hat{U}(t)}\int_0^t\frac{\frac{d}{ds}(s-|X(s)|)}{\left|s-|X(s)|\right|+1}ds \\
&\leq \|K\|_0 {(1+{{U}}^2(t))(1+{\underline{U}}^2(t))^{-1/2}(\ln(1+t)+\ln(1+|X(0)|))}{}.
\end{align*}
Here we may note that $|X(t)|\leq |X(0)|+t$. 
Now in view of the previous Lemma
 \ref{lem:imbd:cha} by taking supreme on the left hand side of the inequality and infimum on the right,
  we conclude that
  \begin{align*}
  U(t)+ \|K\|_0\ln(2+ \|K\|_0)&\leq C(\underline{U}(t)+ \|K\|_0\ln(2+ \|K\|_0)).
\end{align*}
For the case when  $\underline{U}(t)\geq \|K\|_0\ln(2+ \|K\|_0) $, we in particular have that
$$U(t)\leq C\underline{U}(t)\leq C|V(0)|.$$
Therefore
\begin{align*}
&{(1+{{U}}^2(t))}{(1+{\underline{U}}^2(t))^{-1/2}}\leq C(1+{\underline{U}}^2(t))^{1/2}\leq C(1+|V(0)|)\leq C\Lambda.
\end{align*}
Otherwise if $\underline{U}(t)\leq \|K\|_0\ln(2+ \|K\|_0) $, then we have
$$U(t)\leq C\|K\|_0\ln(2+ \|K\|_0),$$
which then implies that
\begin{align*}
&{(1+{{U}}^2(t))}{(1+{\underline{U}}^2(t))^{-1/2}}\leq 1+{{U}}^2(t) \leq C(1+\|K\|_0\ln(2+ \|K\|_0))^2\leq C\Lambda.
\end{align*}
In particular we have shown that
\[
{(1+{{U}}^2(t))}{(1+{\underline{U}}^2(t))^{-1/2}}\leq C\Lambda .
\]
We therefore conclude that
\begin{align*}
&|X(t)-X(0)-t\hat{V}(t)|\leq C\Lambda(\ln(1+t)+\ln(1+|X(0)|)).
\end{align*}
This completes the proof.
\end{proof}

As discussed in the introduction, we deal with the issue of arbitrary large initial velocity by localizing the density function according to the velocity. However to insure the convergence of the approximate solutions, we need to control the diameter of the support of $f_{[k]}^{(n)}(t,x,\cdot)$.

\begin{Lem}
\label{lem:bd:cha:v}
Let $(X_1(s), V_1(s))$, $(X_2(s), V_2(s))$ be two characteristics associated to $K=(E, B)$ on $[0, t]$ such that
$$X_1(t)=X_2(t),\quad |X_i(0)|\leq R,\quad |V_i(0)|\leq R,\quad i=1, 2. $$
Then we have the decay estimate
\begin{align*}
&t|\hat{V}_1(t)-\hat{V}_2(t)|\leq C\Lambda(\ln(1+\|K\|_{1+\alpha}+\|K\|_{0}+R)+1)
\end{align*}
with $ \Lambda=R+(1+ \|K\|_0\ln(2+ \|K\|_0))^2$ for some constant $C$ depending only on $\alpha$.
\end{Lem}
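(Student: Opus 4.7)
My plan is to apply Lemma \ref{lem:imbd:cha:x} to each characteristic individually. Since $|X_i(0)|\le R$ and $|V_i(0)|\le R$, the associated parameter $|V_i(0)|+(1+\|K\|_0\ln(2+\|K\|_0))^2$ is dominated by the $\Lambda$ in the current statement, so for $i=1,2$ the lemma gives
\[
|X_i(t)-X_i(0)-t\hat V_i(t)|\le C\Lambda\bigl(\ln(1+t)+\ln(1+R)\bigr).
\]
Subtracting the two estimates and invoking the assumption $X_1(t)=X_2(t)$ yields
\[
t|\hat V_1(t)-\hat V_2(t)|\le |X_1(0)-X_2(0)|+2C\Lambda\bigl(\ln(1+t)+\ln(1+R)\bigr)\le 2R+2C\Lambda\bigl(\ln(1+t)+\ln(1+R)\bigr).
\]
Since $R\le \Lambda$, every term on the right is already controlled by the desired $C\Lambda(\ln(1+\|K\|_{1+\alpha}+\|K\|_0+R)+1)$ except for the $\ln(1+t)$ factor. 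In the regime $t\le(1+\|K\|_{1+\alpha}+\|K\|_0+R)^A$ for some fixed power $A$, this extra logarithm is absorbed and the proof is complete in that range.

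The remaining case is $t$ very large, and only here does $\|K\|_{1+\alpha}$ have to be invoked. The plan is to control the difference of the two trajectories backward from $s=t$ by setting $\Delta\hat V(s)=\hat V_1(s)-\hat V_2(s)$ and $\Delta X(s)=X_1(s)-X_2(s)$, using the identity $\Delta X(s)=-\int_s^t\Delta\hat V(s')\,ds'$ forced by $X_1(t)=X_2(t)$, and decomposing
\[
J(s,X_1,\hat V_1)-J(s,X_2,\hat V_2)=\bigl[J(s,X_1,\hat V_1)-J(s,X_1,\hat V_2)\bigr]+\bigl[J(s,X_1,\hat V_2)-J(s,X_2,\hat V_2)\bigr].
\]
The first bracket is Lipschitz in $\hat v$ and contributes a factor of $\|K\|_0$ on top of $|\Delta\hat V|$; the second bracket, in the interior region $\{|X_i|\le s\}$, is controlled by the H\"older seminorm $\|K\|_{1+\alpha}$ applied to $|\Delta X|+1$, while in the exterior it is bounded directly by $\|K\|_0$. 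Integrating in $s$ and closing the resulting Gronwall loop should bound $t|\Delta\hat V(t)|$ independently of $t$ and only logarithmically in $\|K\|_{1+\alpha}+\|K\|_0+R$.

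The main obstacle will be executing this large-$t$ analysis: estimating the integrals along the characteristics of the singular kernel $(s-\max(|X_1|,|X_2|)+1)^{-(1+\alpha)}(s+1)^{-1}$, using Lemma \ref{lem:imbd:cha} to control $|V(s)|$ and Lemma \ref{lem:imbd:cha:x} to keep $X(s)$ close to the straight line $X(0)+s\hat V(s)$, and properly managing the transition between interior and exterior regions so that the final bound is only logarithmic in $\|K\|_{1+\alpha}+\|K\|_0+R$ rather than in $t$.
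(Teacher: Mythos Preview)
Your small-$t$ argument matches the paper's exactly: apply Lemma~\ref{lem:imbd:cha:x} to each characteristic, subtract, use $X_1(t)=X_2(t)$, and absorb $\ln(1+t)$ when $t$ is bounded by a fixed power of the data.  For the large-$t$ regime you have the right ingredients (split $J_1-J_2$ into a $\hat v$-variation controlled by $\|K\|_0$ and an $x$-variation controlled by $\|K\|_{1+\alpha}$), but the ``Gronwall loop'' you describe is not what the paper does and, as stated, does not close.

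The missing idea is a \emph{pointwise} a~priori bound on $|\Delta X(s)|$ and $s|\Delta\hat V(s)|$ valid for every $s\in[0,t]$, not just $s=t$.  The paper obtains this by first applying Lemma~\ref{lem:imbd:cha:x} at each $s$ to get
\[
\bigl|\Delta X(s)-s\,\Delta\hat V(s)\bigr|\le C\Lambda(\ln(1+s)+\ln(1+R)+1),
\]
and then invoking the elementary identity
\[
\Delta X(s)-s\,\Delta\hat V(s)=s^{2}\,\frac{d}{ds}\!\left(\frac{\Delta X(s)}{s}\right).
\]
Dividing by $s^{2}$ and integrating from $s$ to $t$ (using $\Delta X(t)=0$) yields $|\Delta X(s)|\le C\Lambda(\ln(1+s)+\ln(1+R)+1)$, and hence the same bound for $s|\Delta\hat V(s)|$.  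With this in hand, the large-$t$ step is a \emph{direct} estimate: one bounds
\[
\int_{T_0}^{t}s\,\bigl|J(s,X_1,\hat V_1)-J(s,X_2,\hat V_2)\bigr|\,ds
\]
by plugging in the already-known pointwise bounds on $|\Delta X(s)|+1$ and $|\Delta\hat V(s)|$; the integrals $\int_{T_0}^{\infty}s^{-1-\alpha}\,ds$ and $\int_{T_0}^{\infty}s^{-2}\,ds$ converge and the choice $T_0\sim (U^2+1)^{3/(2\alpha)}(\|K\|_{1+\alpha}^{1/\alpha}+\|K\|_0+4R)$ makes them $O(1)$.  No Gronwall is involved.

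Your proposed Gronwall, by contrast, would estimate $|\Delta X(s)|$ only as $\int_s^t|\Delta\hat V|$, which gives a factor of $t$ that you cannot eliminate without already knowing the $1/s$ decay of $|\Delta\hat V(s)|$ --- precisely what you are trying to prove.  Also, your concern about the exterior region is a red herring: for $s\ge T_0\ge 4R(U^2+1)$ one has $s-|X_i(s)|\ge s/(4U^2+4)$, so both characteristics are automatically interior and the $\|K\|_{1+\alpha}$ bound applies throughout.
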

\begin{proof}
In view of the previous Lemma \ref{lem:imbd:cha:x}, we first have
\begin{align*}
 &|X_i(s)-X_i(0)-s\hat{V}_i(s)|\leq C\Lambda(\ln(1+s)+\ln(1+R)),\quad i=1, 2.
\end{align*}
In particular we conclude that
\begin{align*}
\numberthis\label{s1}
 &|X_1(s)-X_2(s)-s(\hat{V}_1(s)-\hat{V}_2(s))|
 \leq C\Lambda(\ln(1+s)+\ln(1+R)+1)
\end{align*}
as $\La\geq R$. Using the characteristic equation, we compute that
\begin{align*}
 &X_1(s)-X_2(s)-s(\hat{V}_1(s)-\hat{V}_2(s))=s^2\frac{d}{ds}\frac{X_1-X_2}{s}.
\end{align*}
Hence from the previous inequality \eqref{s1} we derive that
\begin{align*}
&\left|\frac{d}{ds}\frac{X_1-X_2}{s}\right|\leq \frac{C\Lambda}{s^2}(\ln(1+s)+\ln(1+R)+1).
\end{align*}
By the assumption $ X_1(t)=X_2(t)$, the above inequality leads to
\begin{align*}
\left|\frac{X_1(s)-X_2(s)}{s}\right|&\leq \int_s^t\frac{C\Lambda}{\tau^2}(\ln(1+\tau)+\ln(1+R)+1)d\tau\\&\leq \frac{C\Lambda}{s}(\ln(1+s)+\ln(1+R)+1).
\end{align*}
In view of estimate \eqref{s1}, we then can bound that
\begin{align}
\label{s2}
&|X_1(s)-X_2(s)|+s|\hat{V}_1(s)-\hat{V}_2(s)|\leq  C\Lambda(\ln(1+s)+\ln(1+R)+1).
\end{align}
Now define
\begin{align*}
&U=\sup\{|V_i(s)|:0\leq s\leq t,\ i\in\{1,2\}\},\quad \hat{U}=U/\sqrt{1+U^2}.
\end{align*}
In particular we have
$$|V_i(s)|\leq U,\quad |\hat{V}_i(s)|\leq \hat{U}< 1,\quad 1-\hat{U}\geq (1-\hat{U}^2)/2=1/(2U^2+2).$$
Moreover using the equation for $X_i$, we also have
\begin{align*}
&|X_i(s)|\leq |X_i(0)|+s\hat{U}\leq R+s(1-1/(2U^2+2)),\quad \forall\ s\in[0,t].
\end{align*}
Recall the definition of $J$ in \eqref{defJ} in the proof of Lemma \ref{lem:imbd:cha:x}.
Then for $$|v|\leq U,\quad t-|x|+R\geq t/(2U^2+2),\quad t\geq 4R(U^2+1),$$
 we can estimate that
\begin{align*}
 |\nabla_{\hat{v}}J|&\leq C(1-|\hat{v}|^2)^{-1/2}\|K\|_{0}(\left|t-|x|\right|+1)^{-1}(t+|x|+1)^{-1}\\&\leq C\sqrt{1+|v|^2}\|K\|_{0}(t/(4U^2+4)+1)^{-1}(t+1)^{-1}\\&\leq C\sqrt{1+|v|^2}\|K\|_{0}(U^2+1)t^{-2}\\
 &\leq C\|K\|_{0}(U^2+1)^{3/2}t^{-2}.
\end{align*}
Now assume $ |y|\geq |x|$ verifying the same assumption as above (hence we also have $t-|y|\geq t/(4U^2+4))$. Then we can show that
\begin{align*}
|J(t,x,\hat{v})-J(t,y,\hat{v})| &\leq \sqrt{1-|\hat{v}|^2}(|E(t,x)-E(t,y)|+|B(t,x)-B(t,y)|)\\
 &\leq (t-|y|+1)^{-1-\alpha}(t+1)^{-1}(|x-y|+1)\|K\|_{1+\alpha}\\
 &\leq (t/(4U^2+4)+1)^{-1-\alpha}(t+1)^{-1}(|x-y|+1)\|K\|_{1+\alpha}\\
 &\leq C(U^2+1)^{1+\alpha}t^{-2-\alpha}(|x-y|+1)\|K\|_{1+\alpha}.
\end{align*}
By applying the mean value theorem, for the case when $4R(U^2+1)\leq s\leq t$
it follows that
\begin{align*}
&|J(s,X_1(s),\hat{V}_1(s))-J(s,X_2(s),\hat{V}_2(s))|\\
&\leq |J(s,X_1(s),\hat{V}_2(s))-J(s,X_2(s),\hat{V}_2(s))|+|J(s,X_1(s),\hat{V}_1(s))-J(s,X_1(s),\hat{V}_2(s))|\\
&\leq C\|K\|_{1+\alpha}(U^2+1)^{1+\alpha}s^{-2-\alpha}(|X_1(s)-X_2(s)|+1)
+C\|K\|_{0}(U^2+1)^{3/2}s^{-2}|\hat{V}_1(s)-\hat{V}_2(s)|\\
& \leq C(U^2+1)^{3/2}\Lambda(s^{-2-\alpha}\|K\|_{1+\alpha}+s^{-3}\|K\|_{0})(\ln(1+s)+\ln(1+R)+1).
\end{align*}
Here the last step follows from estimate \eqref{s2} and the assumption $\Lambda\geq 1$, $0<\alpha<1/2$. Now define
$$T_0=(U^2+1)^{\frac{3}{2\alpha}}(\|K\|_{1+\alpha}^{\frac{1}{\a}}+\|K\|_{0}+4R).$$
In particular $T_0\geq 4R(U^2+1)>R $. If $t\leq T_0$, by \eqref{s2} 
we have
\begin{align*}
&t|\hat{V}_1(t)-\hat{V}_2(t)|\leq C\Lambda(\ln(1+t)+\ln(1+R)+1)\leq C\Lambda(\ln(1+T_0)+1).
\end{align*}
Otherwise if $t\geq T_0$, note that
\begin{align*}
&\frac{d}{ds}(X_1(s)-X_2(s)-s(\hat{V}_1(s)-\hat{V}_2(s)))=-s\frac{d(\hat{V}_1-\hat{V}_2)}{ds}=-s(J(s,X_1(s),\hat{V}_1(s))-J(s,X_2(s),\hat{V}_2(s))),
\end{align*}
from which we conclude that
\begin{align*}
 &|X_1(s)-X_2(s)-s(\hat{V}_1(s)-\hat{V}_2(s))|\big|_{s=T_0}^{s=t}\\
 & \leq \int_{T_0}^ts|J(s,X_1(s),\hat{V}_1(s))-J(s,X_2(s),\hat{V}_2(s))|ds\\
 & \leq \int_{T_0}^tC(U^2+1)^{3/2}\Lambda(s^{-1-\alpha}\|K\|_{1+\alpha}+s^{-2}\|K\|_{0})(\ln(1+s)+\ln(1+R)+1)ds\\
 & \leq C(U^2+1)^{3/2}\Lambda(T_0^{-\alpha}\|K\|_{1+\alpha}+T_0^{-1}\|K\|_{0})(\ln(1+T_0)+\ln(1+R)+1)ds\\
 &\leq C\Lambda(\ln(1+T_0)+\ln(1+R)+1).
\end{align*}
From estimate \eqref{s1} (as in this case $t\geq T_0$), we derive that
\begin{align*}
&|X_1(s)-X_2(s)-s(\hat{V}_1(s)-\hat{V}_2(s))|\big|_{s=T_0} \leq C\Lambda(\ln(1+T_0)+\ln(1+R)+1).
\end{align*}
Since $X_1(t)=X_2(t)$, we therefore conclude from the previous estimate that
\begin{align*}
 &t|\hat{V}_1(t)-\hat{V}_2(t)|\leq C\Lambda(\ln(1+T_0)+\ln(1+R)+1)\leq C\Lambda(\ln(1+T_0)+1).
\end{align*}
Therefore in any case we always have
\begin{align*}
t|\hat{V}_1(t)-\hat{V}_2(t)| & \leq C\Lambda(\ln(1+T_0)+1)\\
&\leq C\Lambda(\ln(1+U)+\ln(1+\|K\|_{1+\alpha}+\|K\|_{0}+R)+1)\\
& \leq C\Lambda(\ln(1+\|K\|_{1+\alpha}+\|K\|_{0}+R)+1)
\end{align*}
for some constant $C$ depending only on $\a$. Here we used the fact that
$$U\leq C(\max(|X_1(0)|,|X_2(0)|)+ \|K\|_0\ln(2+ \|K\|_0))\leq C(R+ \|K\|_0^2+1)$$
in view of Lemma \ref{lem:imbd:cha}. This completes the proof.
\end{proof}

\section{Decomposition of the density distribution}
\label{sec:localization}
Since there is no restriction on the support of the initial data, the initial velocity can be arbitrarily large. Under the frame work of the existing approach to study RVM, we localize the density function in the following way: Fix an even smooth function $ \widetilde{\psi}:\R\to[0,1]$, which is supported on $[-2,2]$ and equals to $1$ on $[-1,1]$. Define
\[
\psi_0(x,v):=\widetilde{\psi}(|(x,v)|), \quad \psi_k(x,v):=\widetilde{\psi}(|(x,v)|/2^k)-\widetilde{\psi}(|(x,v)|/2^{k-1})
\]
for positive integers $k$
with the natural norm $|(x,v)|=(|x|^2+|v|^2)^{1/2}$. Then define $$f_{0,k}=f_0\psi_{k-1},\quad k\geq 1.$$
In particular $f_{0, k}$ is supported on $\{(x, v)| |x|^2+|v|^2\leq 2^{2k+2}\}$ verifying the property
\begin{align*}
&\sum_{k=0}^{+\infty}\psi_k(x,v)=1,\quad \sum_{k=1}^{+\infty}f_{0,k}(x,v)=f_0(x,v),\\
& \|f_{0,k}\|_0:=\sup_{x,v}|f_{0,k}(x,v)|\leq 2^{(2-k)q}\varepsilon_0.
\end{align*}
Now let $f_{[k]}=f_{[k]}^{(n)}$ (the $n$-th component of the iteration sequence, for simplicity we drop the dependence on $n$) be the solution of the linear Vlasov equation
\begin{align*}
&\partial_tf_{[k]}+\hat{v}\cdot\nabla_xf_{[k]}+(E+\hat{v}\times B)\cdot\nabla_vf_{[k]}=0,\ f_{[k]}(0,x,v)=f_{0,k}(x,v).
\end{align*}
If $(X, V)$ is the associated characteristics, then we have
\begin{align*}
&f_{[k]}(t,x,v)=f_{0,k}(X(0,t,x,v),V(0,t,x,v)),\quad f=\sum_{k=1}^{+\infty}f_{[k]} .
\end{align*}
Using the characteristic equation for $X$, it is obvious that
\[
|X(s,t,x,v)-x|\leq |s-t|.
\]
As $f_{0,k}=0$ when $|x|\geq 2^{k+1}$,  we  therefore conclude that
$$f_{[k]}(t,x,v)=0,\quad \forall |x|\geq t+2^{k+1}.$$
Moreover in view of Lemma \ref{lem:imbd:cha},  we also have
 \begin{align*}
 &|v|\leq C(|V(0,t,x,v)|+ \|K\|_0\ln(2+ \|K\|_0)).
\end{align*}
The discussion leads to the following bound for the velocity support of $f_{[k]}$.
\begin{Lem}
\label{lem:bd:support:fk:v}
If $f_{[k]}(t,x,v)\neq 0 $ then $$|v|\leq C(2^k+ \|K\|_0\ln(2+ \|K\|_0))$$
for some constant $C$ independent of $k$. Here $K=(E, B)$.
\end{Lem}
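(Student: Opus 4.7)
The plan is to combine the velocity support of the initial datum $f_{0,k}$ with the comparison between $|v|$ and $|V(0,t,x,v)|$ along characteristics provided by Lemma \ref{lem:imbd:cha}. The argument is essentially contained in the paragraph preceding the statement, so this should be a short verification rather than a long calculation.

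First I would use the representation formula along characteristics
\[
f_{[k]}(t,x,v) = f_{0,k}(X(0,t,x,v), V(0,t,x,v)),
\]
so that $f_{[k]}(t,x,v) \neq 0$ forces the pair $(X(0,t,x,v), V(0,t,x,v))$ to lie in the support of $f_{0,k}=f_0\psi_{k-1}$. By the construction of $\psi_{k-1}$, this support is contained in $\{(x,v):|(x,v)|\leq 2^{k+1}\}$, which in particular yields the initial velocity bound
\[
|V(0,t,x,v)|\leq 2^{k+1}.
\]

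Next, I would apply Lemma \ref{lem:imbd:cha} with $s=0$ to the characteristic $(X,V)$ originating from $(x,v)$ at time $t$. Since $v = V(t,t,x,v)$, the lemma gives
\[
|v| + \|K\|_0\ln(2+\|K\|_0) \leq C\bigl(|V(0,t,x,v)| + \|K\|_0\ln(2+\|K\|_0)\bigr),
\]
and inserting the support bound above yields
\[
|v| \leq C\bigl(2^{k+1} + \|K\|_0\ln(2+\|K\|_0)\bigr) \leq C'\bigl(2^k + \|K\|_0\ln(2+\|K\|_0)\bigr),
\]
after absorbing the factor $2$ into the constant. This is the desired estimate.

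There is no real obstacle here: the content of the lemma is just a packaging of Lemma \ref{lem:imbd:cha} together with the compact velocity support of $f_{0,k}$. The only point worth noting is that the constant $C$ coming from Lemma \ref{lem:imbd:cha} is independent of $t$, $x$, $v$, and in particular independent of the dyadic index $k$, which is clear from the statement of that lemma since no data-dependent quantities enter its constant.
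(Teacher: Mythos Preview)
Your proof is correct and follows exactly the same approach as the paper: use the characteristic representation to place $(X(0,t,x,v),V(0,t,x,v))$ in the support of $f_{0,k}$, extract the bound $|V(0,t,x,v)|\leq 2^{k+1}$, and then invoke Lemma~\ref{lem:imbd:cha} to transfer this to a bound on $|v|$. The paper's own proof is just the one-line observation about the support, relying on the inequality from Lemma~\ref{lem:imbd:cha} already recorded in the preceding paragraph.
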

\begin{proof}
The lemma follows by the fact that  $f_{[k]}(t,x,v)\neq 0 $ occurs only when  $$|V(0,t,x,v)|\leq 2^{k+1},\quad  |X(0,t,x,v)|\leq 2^{k+1}.$$
\end{proof}

To close the argument, we also need an improved estimate for the charge density.
\begin{Prop}
\label{prop:bd:averaged:f}
For positive integers $k$, denote
\begin{align*}
\Lambda_{k,i}&=2^{k}+ (\|K\|_0\ln(2+ \|K\|_0))^{i},\quad i=1,2,\\
\Lambda_{k,3}&=\ln(1+\|K\|_{1+\alpha}+\|K\|_{0})+k+1.
\end{align*}
Then for all $t\geq 0,\ x\in\R^3$ we have
\begin{align*}
\int_{\R^3}f_{[k]}(t,x,v)dv\leq C\|f_{0,k}\|_0(2^k+t+|x|)^{-3}\Lambda_{k,1}^5\Lambda_{k,2}^3\Lambda_{k,3}^3.
\end{align*}
\end{Prop}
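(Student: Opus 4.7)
The plan is to dichotomize according to the size of $t$ relative to $\Lambda_{k,2}\Lambda_{k,3}$, using a trivial volume bound in the short-time regime and a dispersion-based change of variables in the long-time regime.

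First I would record two a priori facts that hold on the support of $f_{[k]}(t,x,\cdot)$. By finite propagation $f_{[k]}(t,x,v)=0$ whenever $|x|>t+2^{k+1}$, so I may assume throughout that $2^k+t+|x|\leq 4(2^k+t)$. By Lemma \ref{lem:bd:support:fk:v} the velocity on the support is controlled by $|v|\leq C\Lambda_{k,1}$, and in particular $(1+|v|^2)^{5/2}\leq C\Lambda_{k,1}^5$.

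In the short-time regime $t\leq\Lambda_{k,2}\Lambda_{k,3}$, the trivial bound
\[
\int_{\R^3}f_{[k]}(t,x,v)\,dv\leq\|f_{0,k}\|_0\cdot\vol\{|v|\leq C\Lambda_{k,1}\}\leq C\|f_{0,k}\|_0\Lambda_{k,1}^3
\]
already suffices, because $2^k+t+|x|\leq C\Lambda_{k,2}\Lambda_{k,3}$ in this regime forces $(2^k+t+|x|)^{-3}\Lambda_{k,1}^5\Lambda_{k,2}^3\Lambda_{k,3}^3\geq C\Lambda_{k,1}^5\geq C\Lambda_{k,1}^3$.

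In the long-time regime $t\geq\Lambda_{k,2}\Lambda_{k,3}$ the trivial bound is too weak; here I would exploit the dispersion of the characteristics via Lemma \ref{lem:bd:cha:v}. For any two velocities $v_1,v_2$ with $f_{[k]}(t,x,v_i)\neq 0$, the backward characteristics ending at $(t,x,v_i)$ share $X_1(t)=X_2(t)=x$ and have initial data in $\{|X(0)|,|V(0)|\leq 2^{k+1}\}$. Applying Lemma \ref{lem:bd:cha:v} with $R=2^{k+1}$ (so that the auxiliary $\Lambda$ there is $\leq C\Lambda_{k,2}$ and the logarithmic factor is $\leq C\Lambda_{k,3}$) gives
\[
t|\hat v_1-\hat v_2|=t|\hat V_1(t)-\hat V_2(t)|\leq C\Lambda_{k,2}\Lambda_{k,3},
\]
so the image of the $v$-support under $v\mapsto\hat v$ is contained in a ball of diameter $C\Lambda_{k,2}\Lambda_{k,3}/t$ and hence has volume $\leq C(\Lambda_{k,2}\Lambda_{k,3}/t)^3$. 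Changing variables by $dv=(1+|v|^2)^{5/2}d\hat v$ and pulling out the uniform bound $(1+|v|^2)^{5/2}\leq C\Lambda_{k,1}^5$ yields
\[
\int_{\R^3}f_{[k]}(t,x,v)\,dv\leq C\|f_{0,k}\|_0\Lambda_{k,1}^5\Lambda_{k,2}^3\Lambda_{k,3}^3\,t^{-3},
\]
and finally $(2^k+t+|x|)^{-3}\geq Ct^{-3}$, valid because $|x|\leq t+2^{k+1}$ and $t\geq 2^k$, closes the argument.

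The main obstacle is the change-of-variables step in the long-time regime: converting the $\hat v$-diameter bound from Lemma \ref{lem:bd:cha:v} into a $v$-volume bound using the Jacobian $(1+|v|^2)^{5/2}$. This is precisely what forces the appearance of the factor $\Lambda_{k,1}^5$ and clarifies why the norm $\|K\|_{1+\alpha}$ is needed at all (it enters through $\Lambda_{k,3}$). The remainder is bookkeeping relying on the elementary comparisons $\Lambda_{k,1}\leq 2\Lambda_{k,2}$, $2^k\leq\Lambda_{k,2}$, and $\Lambda_{k,i}\geq 1$, together with choosing the threshold between the two regimes to be exactly $\Lambda_{k,2}\Lambda_{k,3}$ so that the two bounds patch together without loss.
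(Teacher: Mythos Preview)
Your proof is correct and follows essentially the same route as the paper. The only cosmetic differences are that the paper splits at the threshold $t=2^k$ rather than $t=\Lambda_{k,2}\Lambda_{k,3}$ (both choices work), and that the paper packages your Jacobian computation $dv=(1+|v|^2)^{5/2}d\hat v$ as a citation to Schaeffer's volume lemma $\mu\{w:|w|\le P,\ |\hat v-\hat w|\le\delta\}\le CP^5\delta^3$, which is exactly the same content.
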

The proof relies on the following result.
\begin{Lem}
\label{lem:fromSch}
For $P\geq 1$, $\delta>0$ and $v\in\mathbb{R}^3$ such that $|v|\leq P$, define the set
 \begin{align*}
 S=\{w| w\in\mathbb{R}^3,\quad  |w|\leq P\quad |\hat{v}-\hat{w}|\leq \delta\}.
\end{align*}
Then the Lebesgue measure of $S$ can be bounded as follows
\[
\mu(S)\leq CP^5\delta^3
\]
 for some constant $C$ independent of $P$ or $\delta$.
 \end{Lem}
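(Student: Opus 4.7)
The plan is to reduce the measure computation to an integral over the image variable by the change of variables $w\mapsto \hat w=w/\sqrt{1+|w|^2}$, which is a smooth diffeomorphism of $\mathbb{R}^3$ onto the open unit ball. The image of $S$ under this map is contained in the Euclidean ball $\{\hat w\in\mathbb{R}^3:|\hat w-\hat v|\leq\delta\}$, whose volume is $\frac{4\pi}{3}\delta^3$. All the $P$-dependence in the claimed inequality must therefore come from the Jacobian of the inverse change of variables, and I expect it to produce exactly $(1+|w|^2)^{5/2}$, which under the constraint $|w|\leq P$ (and $P\geq 1$) is bounded by $CP^5$.

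First I would compute
\[
\frac{\partial\hat w_i}{\partial w_j}=\frac{\delta_{ij}}{\sqrt{1+|w|^2}}-\frac{w_iw_j}{(1+|w|^2)^{3/2}}.
\]
This symmetric matrix is diagonalized by the orthogonal decomposition $\mathbb{R}^3=\mathbb{R}w\oplus w^\perp$: on the radial line it acts as multiplication by $(1+|w|^2)^{-3/2}$, and on the two-dimensional tangential subspace as multiplication by $(1+|w|^2)^{-1/2}$. Hence
\[
\det\bigl(\partial\hat w/\partial w\bigr)=(1+|w|^2)^{-3/2}\cdot(1+|w|^2)^{-1}=(1+|w|^2)^{-5/2},
\]
so $dw=(1+|w|^2)^{5/2}\,d\hat w$.

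Then I would apply the change of variables directly:
\[
\mu(S)=\int_S dw=\int_{\hat S}\bigl(1+|w(\hat w)|^2\bigr)^{5/2}d\hat w,
\]
where $\hat S=\{\hat w: w(\hat w)\in S\}\subset\{|\hat w-\hat v|\leq\delta\}$. On $S$ we have $|w|\leq P$, and since $P\geq 1$, $(1+|w|^2)^{5/2}\leq (1+P^2)^{5/2}\leq (2P^2)^{5/2}=2^{5/2}P^5$. Combining with $\mu(\hat S)\leq \frac{4\pi}{3}\delta^3$ yields $\mu(S)\leq CP^5\delta^3$ as required.

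There is no serious obstacle; the only care needed is the eigenvalue computation for the Jacobian matrix, and the mild observation that $P\geq 1$ is what allows $(1+P^2)^{5/2}$ to be absorbed into $CP^5$. The fact that the hypothesis $|v|\leq P$ is not actually used beyond defining the ambient situation (since it only enters through the location of the centre $\hat v$, which is inside the unit ball and thus irrelevant to the volume bound) is worth noting as a sanity check on the statement.
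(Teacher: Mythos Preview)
Your argument is correct. The Jacobian computation for the map $w\mapsto\hat w$ is accurate: the matrix $\partial\hat w/\partial w=(1+|w|^2)^{-1/2}\bigl(I-\tfrac{ww^T}{1+|w|^2}\bigr)$ has eigenvalue $(1+|w|^2)^{-3/2}$ in the radial direction and $(1+|w|^2)^{-1/2}$ on $w^\perp$, giving determinant $(1+|w|^2)^{-5/2}$ as you say. Bounding this Jacobian factor by $(1+P^2)^{5/2}\leq 2^{5/2}P^5$ and the image set by the ball of radius $\delta$ yields the claimed bound with $C=2^{5/2}\cdot\tfrac{4\pi}{3}$.

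As for comparison: the paper does not give its own proof here but simply cites Lemma~1.4 of Schaeffer. Your change-of-variables argument is a clean, self-contained route and is in fact the natural way to see why the exponent $5$ arises (three factors of $(1+|w|^2)^{1/2}$ from the tangential directions plus two more from the radial stretching). Your observation that the hypothesis $|v|\leq P$ plays no role in the bound is also correct.
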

 \begin{proof}
 See Lemma 1.4 in \cite{Schaeffer:VM:notcompat:p}.
 \end{proof}
We now can prove Proposition \ref{prop:bd:averaged:f}.
For fixed $t\geq 0,\ x\in\R^3,$ define
\begin{align*}
&S_0=\{w| f_{[k]}(t,x,w)\neq 0\}.
\end{align*}
The case when $S_0$ is empty holds automatically. In the sequel, let's fix $v\in S_0$. Denote
 $$P=\sup\{|w||w\in S_0\}.$$
 In particular $|v|\leq P.$ From the above Lemma \ref{lem:bd:support:fk:v}, we derive that
 $$P\leq C(2^k+ \|K\|_0\ln(2+ \|K\|_0))=C\Lambda_{k,1}.$$
 By the discussion before Lemma \ref{lem:bd:support:fk:v}, for any $w\in S_0$, we have
 \begin{align*}
 &\max(|X(0,t,x,v)|, \quad |X(0,t,x,w)|,\quad |V(0,t,x,v)|,\quad |V(0,t,x,w)|)\leq 2^{k+1}.
\end{align*}
Then in view of Lemma \ref{lem:bd:cha:v} with $R=2^{k+1}$, we conclude that
\begin{align*}
t|\hat{v}-\hat{w}|&\leq C\Lambda(\ln(1+\|K\|_{1+\alpha}+\|K\|_{0}+2^k)+1)
\leq C\Lambda_{k,2}\Lambda_{k,3}.
\end{align*}
Here  $ \Lambda=2^{k+1}+(1+\|K\|_0\ln(2+ \|K\|_0))^2$. This means that for $t\geq 2^k$, the set $S_0$ is a subset of
\begin{align*}
&S=\{w|\ |w|\leq P\ \text{and}\ |\hat{v}-\hat{w}|\leq C t^{-1} \Lambda_{k,2}\Lambda_{k,3}\}.
\end{align*}
Then using Lemma \ref{lem:fromSch}, for $t\geq 2^k$, we conclude that
\begin{align*}
&\mu(S_0)\leq CP^5(Ct^{-1}\Lambda_{k,2}\Lambda_{k,3})^3\leq Ct^{-3}\Lambda_{k,1}^5\Lambda_{k,2}^3\Lambda_{k,3}^3\leq C(2^k+t)^{-3}\Lambda_{k,1}^5\Lambda_{k,2}^3\Lambda_{k,3}^3.
\end{align*}
Otherwise if $t\leq 2^k$, since $\Lambda_{k,1}\geq 1$, $\Lambda_{k,2}\geq 2^k\geq t$ and $\Lambda_{k,3}\geq 1 $, we in particular have that
 $$\mu(S_0)\leq \mu(\{w|\ |w|\leq P\})\leq CP^3\leq C\Lambda_{k,1}^3\leq C (2^k+t)^{-3}\Lambda_{k,1}^5\Lambda_{k,2}^3\Lambda_{k,3}^3. $$
 This bound for the measure of the set $S_0$ then leads to
 \begin{align*}
 \int_{\R^3}f_{[k]}(t,x,v)dv\leq\|f_{0,k}\|_0\mu(S_0)\leq C\|f_{0,k}\|_0(2^k+t)^{-3}\Lambda_{k,1}^5\Lambda_{k,2}^3\Lambda_{k,3}^3.
\end{align*}
This proves Proposition \ref{prop:bd:averaged:f}.

 \bigskip

To control the Maxwell field, we also need an estimate on the weighted charge on backward cones, which relies on the following conservation of momentum:
  \begin{align*}
  \numberthis\label{1.3}
  &\int_{|y-x|\leq t}\int_{\R^3}(1+\hat{v}\cdot\omega)f_{[k]}(t-|y-x|,y,v){dvdy} = \int_{|y-x|\leq t}\int_{\R^3}f_{[k]}(0,y,v){dvdy}.
\end{align*}
Here $\omega=\frac{y-x}{|x-y|}$.
In fact we can write the Vlasov equation as
\begin{align*} &\partial_tf_{[k]}+\nabla_x\cdot[\hat{v}f_{[k]}]+\nabla_v\cdot[(E+\hat{v}\times B)f_{[k]}]=0.
\end{align*}
Integrate this equation on the domain $\{(s,y,v)|0\leq s\leq t-|y-x|,\ y,v\in\R^3\}$. The above conservation then follows by using Stokes formula.
This conservation of momentum is used to prove the following decay estimate for the weighted charge.
\begin{Lem}
\label{lem:decay:fk}
Let $\Lambda_{k,i}$ be constants defined in Proposition \ref{prop:bd:averaged:f}. Then for all  $0\leq p\leq 2$, $ \forall t\geq 0$, $x\in\mathbb{R}^3$  and positive integer $k$, we have the decay estimate
\begin{align*}
&\int_{|y-x|\leq t}\int_{\R^3}f_{[k]}(t-|y-x|,y,v)\frac{dvdy}{|y-x|^p} \leq C\|f_{0,k}\|_0(2^k+t)^{-p}2^{(6-2p)k}\Lambda_{k,1}^{2+p}\Lambda_{k,2}^p\Lambda_{k,3}^p.
\end{align*}
\end{Lem}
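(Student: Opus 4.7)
The proof combines two complementary ingredients. First, the momentum conservation identity~\eqref{1.3} yields a global $L^1$ bound on the backward cone: since $f_{0,k}$ is supported on the ball $\{|y|^2+|v|^2\leq 2^{2k+2}\}$ of volume $\sim 2^{6k}$, the right-hand side of~\eqref{1.3} is at most $C\|f_{0,k}\|_0 2^{6k}$. On the support of $f_{[k]}$, Lemma~\ref{lem:bd:support:fk:v} gives $|v|\leq C\Lambda_{k,1}$, hence $1+\hat v\cdot\omega\geq \tfrac{1-|\hat v|^2}{2}=\tfrac{1}{2(1+|v|^2)}\geq c\Lambda_{k,1}^{-2}$. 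Dividing, we obtain the unweighted $p=0$ estimate
\[
\int_{|y-x|\leq t}\int_{\R^3}f_{[k]}(t-|y-x|,y,v)\,dvdy\leq C\|f_{0,k}\|_0 2^{6k}\Lambda_{k,1}^2,
\]
which already settles the case $p=0$.

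For $p>0$, the plan is to split the integration at a radius $\rho\in(0,t)$ to be optimized. On the far piece $|y-x|>\rho$, use $|y-x|^{-p}\leq \rho^{-p}$ and the previous bound to obtain
\[
\int_{|y-x|>\rho}\int f_{[k]}\,\frac{dvdy}{|y-x|^p}\leq C\rho^{-p}\|f_{0,k}\|_0 2^{6k}\Lambda_{k,1}^2.
\]
On the near piece $|y-x|\leq\rho$, set $s=t-|y-x|$ and invoke Proposition~\ref{prop:bd:averaged:f}, bounding $\int f_{[k]}(s,y,v)\,dv$ by $C\|f_{0,k}\|_0(2^k+s+|y|)^{-3}\Lambda_{k,1}^5\Lambda_{k,2}^3\Lambda_{k,3}^3$. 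Picking $\rho\leq t/2$ guarantees $(2^k+s+|y|)\geq c(2^k+t)$ throughout the near region, and the elementary identity $\int_{|y-x|\leq\rho}|y-x|^{-p}\,dy=\tfrac{4\pi \rho^{3-p}}{3-p}$ yields
\[
\int_{|y-x|\leq\rho}\int f_{[k]}\,\frac{dvdy}{|y-x|^p}\leq C\|f_{0,k}\|_0(2^k+t)^{-3}\rho^{3-p}\Lambda_{k,1}^5\Lambda_{k,2}^3\Lambda_{k,3}^3.
\]

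The two contributions are equalized by the choice $\rho=(2^k+t)\,2^{2k}/(\Lambda_{k,1}\Lambda_{k,2}\Lambda_{k,3})$, and a direct substitution turns each one into exactly $C\|f_{0,k}\|_0(2^k+t)^{-p}2^{(6-2p)k}\Lambda_{k,1}^{2+p}\Lambda_{k,2}^p\Lambda_{k,3}^p$, which is the claimed estimate. The admissibility condition $\rho\leq t/2$ follows under mild assumptions from the lower bounds $\Lambda_{k,1},\Lambda_{k,2}\geq 2^k$ and $\Lambda_{k,3}\geq k+1$. The main bookkeeping obstacle is the edge regime $t\lesssim 2^k$, in which the optimized $\rho$ may exceed $t$; there $(2^k+t)\sim 2^k$ is bounded and one applies Proposition~\ref{prop:bd:averaged:f} directly on the entire cone, using the trivial $(2^k+s+|y|)^{-3}\leq 2^{-3k}$ together with $\int_{|y-x|\leq t}|y-x|^{-p}dy\leq C\,2^{(3-p)k}$, the surplus $\Lambda$ factors being absorbed by $\Lambda_{k,1}\Lambda_{k,2}\geq 2^{2k}$. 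The conceptual heart of the proof is the observation that the global $L^1$ bound coming from~\eqref{1.3} is precisely the ingredient needed to balance the pointwise estimate of Proposition~\ref{prop:bd:averaged:f} near the light cone; everything else reduces to optimization.
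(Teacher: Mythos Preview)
Your approach is essentially the paper's: establish $p=0$ via the momentum identity~\eqref{1.3}, then split near/far and optimize. The far and near estimates and the choice $\rho=(2^k+t)2^{2k}/(\Lambda_{k,1}\Lambda_{k,2}\Lambda_{k,3})$ are correct and reproduce the paper's computation (the paper writes $\rho=\delta t$ with $\delta=2^{2k}/(\Lambda_{k,1}\Lambda_{k,2}\Lambda_{k,3})$).

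There is, however, a genuine gap in your edge-case paragraph. When $t\lesssim 2^k$ you bound the whole cone by Proposition~\ref{prop:bd:averaged:f} alone, obtaining $C\|f_{0,k}\|_0\,2^{-pk}\Lambda_{k,1}^5\Lambda_{k,2}^3\Lambda_{k,3}^3$, and then claim the ``surplus $\Lambda$ factors'' are absorbed by $\Lambda_{k,1}\Lambda_{k,2}\ge 2^{2k}$. That inequality goes the wrong way: to pass from $\Lambda_{k,1}^5\Lambda_{k,2}^3\Lambda_{k,3}^3$ down to $2^{(6-2p)k}\Lambda_{k,1}^{2+p}\Lambda_{k,2}^p\Lambda_{k,3}^p$ you would need $(\Lambda_{k,1}\Lambda_{k,2}\Lambda_{k,3})^{3-p}\le C\,2^{(6-2p)k}$, which fails whenever $\|K\|_0$ (hence $\Lambda_{k,i}$) is large compared to $2^k$.

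The cleanest repair is to notice that the edge case is unnecessary. Your near-piece estimate does not actually require $\rho\le t/2$; it only needs $2^k+t-|y-x|\ge c(2^k+t)$ on the integration region, i.e.\ $\rho\le (1-c)(2^k+t)$. Since $\Lambda_{k,1}\Lambda_{k,2}\Lambda_{k,3}\ge 2^{2k+1}$, the optimized $\rho$ always satisfies $\rho\le(2^k+t)/2$, so the main argument already covers all $t$ (when $\rho>t$ the far piece is empty and the near piece, integrated only over $|y-x|\le t\le\rho$, is still bounded by $C\rho^{3-p}$). Alternatively, do as the paper does: record the sharper $p=0$ bound $C\Lambda_{k,1}^2(2^k\min(2^k,t))^3\|f_{0,k}\|_0$ (the $y$-region has volume $\le C\min(2^k,t)^3$), split at $\delta t$ with $\delta\le 1/2$ fixed, and the $\min$ handles small $t$ automatically.
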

\begin{proof}
First by the definition of $f_{[k]}(0, y, v)$, we can show that
\begin{align*}
\int_{|y-x|\leq t}\int_{\R^3}f_{[k]}(0,y,v){dvdy}&=\int_{|y-x|\leq t,|y|\leq 2^{k+1}}\int_{|v|\leq 2^k}f_{0,k}(y,v){dvdy}\\
&\leq  C(2^k\min(2^k,t))^3\|f_{0,k}\|_0\\
&\leq C2^{6k}\|f_{0,k}\|_0,
\end{align*}
For $v\in\mathbb{R}^3$ such that $f_{[k]}(t, y, v)\neq 0$, by using Lemma \ref{lem:bd:support:fk:v}, we have the lower bound
\begin{align*}
1+\hat{v}\cdot\omega\geq \f12(1+|v|^2)^{-1}\geq C^{-1} \Lambda_{k, 1}^{-2}.
\end{align*}
Then from the above conservation of momentum \eqref{1.3}, we can show that
\begin{align*}
\int_{|y-x|\leq t}\int_{\R^3}f_{[k]}(t-|y-x|,y,v){dvdy}
& \leq C\Lambda_{k,1}^2\int_{|y-x|\leq t}\int_{\R^3}(1+\hat{v}\cdot\omega)f_{[k]}(t-|y-x|,y,v){dvdy}\\
& \leq  C\Lambda_{k,1}^2(2^k\min(2^k,t))^3\|f_{0,k}\|_0\\
& \leq  C\Lambda_{k,1}^22^{6k}\|f_{0,k}\|_0.
\end{align*}
In particular, the Lemma holds for $p=0.$

Now for any $ \delta\in(0,1/2]$ and $p\in(0,2]$, we can estimate that
\begin{align*}
\int_{\delta t\leq |y-x|\leq t}\int_{\R^3}f_{[k]}(t-|y-x|,y,v)\frac{dvdy}{|y-x|^p}
& \leq \frac{1}{(\delta t)^p}\int_{|y-x|\leq t}\int_{\R^3}f_{[k]}(t-|y-x|,y,v){dvdy}\\
& \leq \frac{C}{(\delta t)^p}\Lambda_{k,1}^2(2^k\min(2^k,t))^3\|f_{0,k}\|_0 \\
&\leq \frac{C\Lambda_{k,1}^22^{6k}}{\delta^p(2^k+t)^p}\|f_{0,k}\|_0.
\end{align*}
For the integral on the region close to the light cone, in view of the pointwise estimate of Proposition \ref{prop:bd:averaged:f}, we can bound that
\begin{align*}
&\int_{|y-x|\leq \delta t}\int_{\R^3}f_{[k]}(t-|y-x|,y,v)\frac{dvdy}{|y-x|^p}\\
& \leq C\Lambda_{k,1}^5\Lambda_{k,2}^3\Lambda_{k,3}^3\int_{|y-x|\leq \delta t}(2^k+t-|y-x|)^{-3}\|f_{0,k}\|_0\frac{dy}{|y-x|^p}\\
& \leq C\Lambda_{k,1}^5\Lambda_{k,2}^3\Lambda_{k,3}^3(\delta t)^{3-p}(2^k+t-\delta t)^{-3}\|f_{0,k}\|_0\\
& \leq C\Lambda_{k,1}^5\Lambda_{k,2}^3\Lambda_{k,3}^3\delta^{3-p}(2^k+t)^{-p}\|f_{0,k}\|_0.
\end{align*}
Therefore for all $\delta\in(0, \frac{1}{2}]$ we have shown that
\begin{align*}
&\int_{|y-x|\leq  t}\int_{\R^3}f_{[k]}(t-|y-x|,y,v)\frac{dvdy}{|y-x|^p} \leq C\Lambda_{k,1}^2(2^k+t)^{-p}\delta^{-p}(2^{6k}+\Lambda_{k,1}^3\Lambda_{k,2}^3\Lambda_{k,3}^3\delta^{3})\|f_{0,k}\|_0.
\end{align*}
As $\Lambda_{k,1}\geq 2^k$, $\Lambda_{k,2}\geq 2^k$, $\Lambda_{k,3}\geq k+1\geq 2, $ we have $\Lambda_{k,1}\Lambda_{k,2}\Lambda_{k,3}\geq 2^{2k+1}.$ The Lemma then follows by taking
$\delta=2^{2k}/(\Lambda_{k,1}\Lambda_{k,2}\Lambda_{k,3})$.
\end{proof}

\section{Bound for the Maxwell field}

We construct the solution iteratively: starting with a given Maxwell field, we solve the Vlasov equation via characteristics with estimates derived in the previous section. Then with the density distribution, we investigate the linear Maxwell equation. For symmetry, we will mainly carry out the detailed analysis on the electric field $E$, which heavily relies on the following representation formula for $E^*$ of linear Maxwell equation
\begin{equation}
\label{eq:def:electric}
\begin{split}
E^* &=E_z^*+E_T^*+E_S^*,\\
E_T^*(t,x)&=-\int_{|y-x|\leq t}\int_{\R^3}\frac{(\omega+\hat{v})(1-|\hat{v}|^2)}{(1+\hat{v}\cdot\omega)^2}f(t-|y-x|,y,v)\frac{dvdy}{|y-x|^2},\\
 E_S^*(t,x)&=-\int_{|y-x|\leq t}\int_{\R^3}\nabla_v\left[\frac{\omega+\hat{v}}{1+\hat{v}\cdot\omega}\right]\cdot(E+\hat{v}\times B)f|_{(t-|y-x|,y,v)}\frac{dvdy}{|y-x|},\\
 E_z^*(t,x)&=\mathcal{E}(t,x)-\frac{1}{t}\int_{|y-x|= t}\int_{\R^3}\frac{\omega+\hat{v}}{1+\hat{v}\cdot\omega}f_0(y,v)dvdS_y.
 \end{split}
\end{equation}
Here $ \omega=(y-x)/|y-x|$ and $\mathcal{E}(t,x) $ is defined in \eqref{defE}. The above formulae could be found, for example, in Theorem 3 of \cite{Glassey:VM:singularity:86}.

Note that
$E_z^*$ depends only on the initial data. We first bound the linear evolution $\mathcal{E}(t,x) $, which relies on the following integration lemma.
\begin{Lem}
\label{lem:bd:linear:Maxwell}
Assume that $h\in C({\R^3})$ such that $$|h(x)|\leq K_0(1+|x|)^{-k}$$ for some constant $K_0$. Then for $t\geq 0,\ x\in\R^3$
we have
\begin{align*}
\left|\int_{|x-y|=t}h(y)dS_y\right|\leq \left\{\begin{array}{ll}8\pi K_0t^2(1+t+|x|)^{-1}(1+|t-|x||)^{-1},&  k=2,\\4\pi K_0t(1+t+|x|)^{-1}(1+|t-|x||)^{-k+2},& k\geq 3.
\end{array}
\right.
\end{align*}
\end{Lem}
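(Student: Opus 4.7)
The plan is a direct spherical-coordinate computation. I would parametrize the sphere by $y=x+t\omega$ with $\omega\in S^2$, and use spherical coordinates in which $\theta\in[0,\pi]$ is the angle between $\omega$ and $x/|x|$. Writing $r=|x|$, the radius $|y|=\sqrt{r^2+2rt\cos\theta+t^2}$ is a monotone function of $\theta$, taking the values $r+t$ at $\theta=0$ and $|r-t|$ at $\theta=\pi$. Integrating out the azimuthal angle and substituting $u=|y|$ (so $u\,du=-rt\sin\theta\,d\theta$), the hypothesis $|h(y)|\le K_0(1+|y|)^{-k}$ reduces the surface integral, when $r>0$, to
\[
\Bigl|\int_{|y-x|=t}h(y)\,dS_y\Bigr|\le \frac{2\pi K_0 t}{r}\int_{|r-t|}^{r+t}\frac{u\,du}{(1+u)^k}.
\]
The degenerate case $r=0$ is immediate since $\int_{|y|=t}|h|\,dS_y\le 4\pi K_0 t^2(1+t)^{-k}$, which is already dominated by either of the claimed bounds.

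Setting $a=1+|r-t|$ and $b=1+r+t$, so that $b-a=2\min(r,t)$, everything reduces to estimating $I:=\int_a^b(w-1)w^{-k}dw$. For $k\ge 3$, I would compute $I$ exactly, discard its (negative) second term, and apply the elementary convexity inequality $1-(a/b)^{k-2}\le (k-2)(b-a)/b$:
\[
I=\tfrac{1}{k-2}(a^{-k+2}-b^{-k+2})+\tfrac{1}{k-1}(b^{-k+1}-a^{-k+1})\le\frac{a^{-k+2}(b-a)}{b}.
\]
Multiplying by $2\pi K_0 t/r$ and absorbing $\min(r,t)/r\le 1$ yields exactly the claimed bound $4\pi K_0 t(1+|r-t|)^{-k+2}/(1+r+t)$.

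For $k=2$ the exact evaluation $I=\ln(b/a)-(b-a)/(ab)$ introduces a logarithm that must be controlled without losing the $1/(1+r+t)$ decay factor. Using $\ln(b/a)\le(b-a)/a$ gives $I\le (b-a)(b-1)/(ab)$, that is
\[
I\le\frac{2\min(r,t)(r+t)}{(1+|r-t|)(1+r+t)}.
\]
Multiplying by $2\pi K_0 t/r$ and splitting into the subcases $r\le t$ (where $r+t\le 2t$ and $\min(r,t)/r=1$) and $r>t$ (where $\min(r,t)/r=t/r$ and $r+t\le 2r$) produces the constant $8\pi K_0 t^2/[(1+r+t)(1+|r-t|)]$ in both subcases.

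I expect the main obstacle to be the $k=2$ case: the logarithm must be absorbed in a way that simultaneously retains the $1/(1+|r-t|)$ decay near the light cone and the $1/(1+t+r)$ decay at infinity, and it is the case split $r\le t$ versus $r>t$ that recovers the sharp constant. The $k\ge 3$ case is by contrast purely algebraic, following cleanly from the identity $1-(a/b)^{k-2}\le(k-2)(b-a)/b$.
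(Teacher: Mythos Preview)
Your proof is correct and follows essentially the same approach as the paper: both reduce to the one-dimensional integral $\frac{2\pi K_0 t}{r}\int_{|r-t|}^{r+t}\frac{\lambda\,d\lambda}{(1+\lambda)^k}$ and then estimate it via the same elementary inequalities. The only cosmetic difference is that for $k=2$ the paper avoids your case split $r\le t$ versus $r>t$ by using directly $2\min(r,t)(r+t)\le 4rt$, which is exactly the content of your two cases combined.
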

\begin{proof}
By direct computation or from the proof of Lemma 5.7 in \cite{Rein90:VM}, we can show that
\begin{align*}
&\left|\int_{|x-y|=t}h(y)dS_y\right|\leq 2\pi K_0 t r^{-1} \int_{|t-r|}^{t+r}\frac{\lambda d\lambda}{(1+\lambda)^k},\quad r=|x|.
\end{align*}
Denote $a=t+r$, $b=|t-r|$.  For the case when $k=2$, we can compute that
\begin{align*}
\int_{a}^{b}\frac{\lambda d\lambda}{(1+\lambda)^2}&=\int_{b}^{a}\frac{ d\lambda}{1+\lambda}-\int_{b}^{a}\frac{ d\lambda}{(1+\lambda)^2}  \leq \frac{ a-b}{1+b}-\frac{ a-b}{(1+a)(1+b)}
 \leq \frac{ 4rt}{(1+a)(1+b)}.
\end{align*}
Hence the Lemma holds for the case when $k=2$.

When
$k\geq 3$, we can bound that
\begin{align*}
\int_{a}^{b}\frac{\lambda d\lambda}{(1+\lambda)^k} &\leq\int_{b}^{a}\frac{ d\lambda}{(1+\lambda)^{k-1}}\\
&=\frac{1}{k-2} (1+a)^{-k+2}(1+b)^{-k+2} ((1+a)^{k-2}-(1+b)^{k-2}) \\
&\leq (1+a)^{-k+2}(1+b)^{-k+2} (a-b) (1+a)^{k-3}\\
&\leq 2r (1+a)^{-1}(1+b)^{-k+2}.
\end{align*}
This shows that the Lemma holds when $k\geq 3$.
\end{proof}

With this lemma, we then can control the linear evolution of the Maxwell field which. 
\begin{Prop}
\label{prop:bd4:lMaxwell}
The linear Maxwell field $(\mathcal{E}, \mathcal{B})$ verifies the following bound
$$\|(\mathcal{E},\mathcal{B})\|\leq CM$$
for some constant $C>0$. Here the norm $\|\cdot \|$ is the $K$-norm defined in \eqref{eq:def4:Knorm}.
\end{Prop}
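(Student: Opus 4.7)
The plan is to estimate the two pieces of the $K$-norm separately. The bound $\|(\mathcal{E},\mathcal{B})\|_0\le CM$ follows by plugging the decay assumption $|\nabla^k E_0|+|\nabla^k B_0|\le M(1+|x|)^{-2-k}$ directly into the Kirchhoff representation \eqref{defE} and applying Lemma \ref{lem:bd:linear:Maxwell} term by term. The summand $E_0(y)$ has $|E_0(y)|\le M(1+|y|)^{-2}$ and uses Lemma \ref{lem:bd:linear:Maxwell} with $k=2$, contributing $CMt^2(1+t+|x|)^{-1}(1+|t-|x||)^{-1}$; the summands $((y-x)\cdot\nabla)E_0$ and $t\,\mathrm{curl}\,B_0$ both have size $\le tM(1+|y|)^{-3}$ on $|y-x|=t$, so Lemma \ref{lem:bd:linear:Maxwell} with $k=3$ yields the same order after the extra factor $t$ is included. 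Dividing by $4\pi t^2$ produces $|\mathcal{E}(t,x)|\le CM(1+t+|x|)^{-1}(1+|t-|x||)^{-1}$, and an identical argument treats $\mathcal{B}$.

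For the Hölder piece $\|(\mathcal{E},\mathcal{B})\|_{1+\alpha}$, which is taken only over $|x|\le |y|\le t$, I would estimate $|\mathcal{E}(t,x)-\mathcal{E}(t,y)|$ by the mean value theorem. A simple observation is that $|z|^2$ is a convex quadratic in $s$ along the segment $z=(1-s)x+sy$, so $|z|\le\max(|x|,|y|)=|y|\le t$ on that segment and hence $t-|z|\ge t-|y|\ge 0$; it therefore suffices to control $|\nabla_x\mathcal{E}(t,z)|$ at points with $|z|\le t$. Parameterizing $y=x+t\omega$ on $S^2$ and differentiating under the integral converts $\nabla_x\mathcal{E}$ into a surface integral involving $\nabla E_0$, $\nabla^2 E_0$ and $\nabla^2 B_0$, which by the hypothesis decay like $(1+|y|)^{-3}$ and $(1+|y|)^{-4}$. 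Applying Lemma \ref{lem:bd:linear:Maxwell} with $k=3$ and $k=4$ (absorbing the factor $|y-x|=t$ in the second-order pieces) produces, for $t\ge 1$ and $|z|\le t$,
\[
|\nabla_x\mathcal{E}(t,z)|\le \frac{CM}{(1+t)(1+t-|z|)^{2}}.
\]
Combined with the mean value inequality this yields
\[
(1+t-|y|)^{1+\alpha}(t+1)\,\frac{|\mathcal{E}(t,x)-\mathcal{E}(t,y)|}{|x-y|+1}\le CM(1+t-|y|)^{\alpha-1}\le CM,
\]
since $\alpha<1$. For the remaining range $t\le 1$, differentiating the $S^2$-integral representation of $\mathcal{E}$ directly gives $|\nabla\mathcal{E}(t,x)|\le CM$, and the weight $(1+t-|y|)^{1+\alpha}(t+1)$ is bounded by a constant, so the same bound follows. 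The argument for $\mathcal{B}$ is identical by symmetry.

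The main obstacle I expect is extracting the improved weight $(1+t-|z|)^{-2}$ in the bound for $|\nabla_x\mathcal{E}|$; the naive estimate that bounds every differentiated summand through Lemma \ref{lem:bd:linear:Maxwell} at $k=3$ would yield only $(1+t-|z|)^{-1}$, leaving an unbounded factor $(1+t-|y|)^{\alpha}$ in the final norm computation. The extra power of $(1+t-|z|)$ must be supplied by the $\nabla^2 E_0,\nabla^2 B_0$ pieces via the $k=2$ derivative hypothesis on the data paired with Lemma \ref{lem:bd:linear:Maxwell} at $k=4$; it is precisely for this reason that the Theorem assumes $E_0,B_0\in C^2$ with the quantitative decay of the second derivatives. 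Apart from this, the proof is bookkeeping and the symmetry between $\mathcal{E}$ and $\mathcal{B}$.
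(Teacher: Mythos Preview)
Your proposal is correct and follows essentially the same route as the paper: split the Kirchhoff formula into its three summands, apply Lemma~\ref{lem:bd:linear:Maxwell} with the appropriate exponent for the $\|\cdot\|_0$ bound, then differentiate under the integral, apply Lemma~\ref{lem:bd:linear:Maxwell} again, and finish the $\|\cdot\|_{1+\alpha}$ bound via the mean value theorem on the segment (with the small-$t$ case handled separately). One small clarification on your ``obstacle'': the first-derivative summand $\frac{1}{4\pi t^{2}}\int_{|y-x|=t}\nabla E_0\,dS_y$ already produces the improved weight on its own, since Lemma~\ref{lem:bd:linear:Maxwell} at $k=3$ together with the $t^{-2}$ prefactor yields $Mt^{-1}(1+t+|z|)^{-1}(1+t-|z|)^{-1}\le CM(1+t)^{-1}(1+t-|z|)^{-2}$ by trading $(1+t+|z|)^{-1}\le(1+t-|z|)^{-1}$; the $C^2$ hypothesis is needed only so that the \emph{other} two differentiated summands (those involving $\nabla^2E_0$, $\nabla^2B_0$) can be fed into Lemma~\ref{lem:bd:linear:Maxwell} at $k=4$ and match that bound.
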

\begin{proof}
For symmetry we only prove the bound for the linear electric field $ \mathcal{E}(t,x)$
, which could be decomposed into three parts
\begin{align*}
&\mathrm{I}_1=\frac{1}{4\pi t^2}\int_{|y-x|= t}E_0(y)dS_y,\quad
\mathrm{I}_3=\frac{1}{4\pi t}\int_{|y-x|= t}\text{curl}B_0(y)dS_y,\\
& \mathrm{I}_2=\frac{1}{4\pi t^2}\int_{|y-x|= t}((y-x)\cdot\nabla)E_0(y)dS_y.
\end{align*}
By the assumption on the initial Maxwell field and the previous Lemma \ref{lem:bd:linear:Maxwell}, we conclude that
\begin{align*}
&|\mathrm{I}_1 |+|\mathrm{I}_2|+|\mathrm{I}_3|\leq 6 M(1+t+|x|)^{-1}(1+|t-|x||)^{-1}.
\end{align*}
This implies that
\begin{align*}
\|(\mathcal{E},0)\|_0\leq CM.
\end{align*}
To control the mixed Lipschitz norm $\|\cdot\|_{1+\a}$ norm, we appeal to the derivative of the Maxwell field. Note that
\begin{align*}
&\nabla \mathrm{I}_1=\frac{1}{4\pi t^2}\int_{|y-x|= t}\nabla E_0(y)dS_y,\quad  \nabla \mathrm{I}_3=\frac{1}{4\pi t}\int_{|y-x|= t}\nabla \text{curl}B_0(y)dS_y,\\
& \nabla\mathrm{I}_2=\frac{1}{4\pi t^2}\int_{|y-x|= t}((y-x)\cdot\nabla)\nabla E_0(y)dS_y.
\end{align*}
Again the assumption on the initial Maxwell field together with
Lemma \ref{lem:bd:linear:Maxwell} shows that
\begin{align*}
&|\nabla_x\mathcal{E}(t,x)|\leq |\nabla \mathrm{I}_1 |+|\nabla \mathrm{I}_2 |+|\nabla \mathrm{I}_3 |\leq CM t^{-1}(1+|t-|x||)^{-2}.
\end{align*}
Now if $|x|\leq |y|\leq t$ and $t\geq 1,$ by applying the mean value theorem to $\mathcal{E}$, it follows that
\begin{align*}
|\mathcal{E}(t,x)-\mathcal{E}(t,y)|&\leq |x-y|\sup_{s\in[0,1]}|\nabla_x\mathcal{E}(t,sx+(1-s)y)|\\ &\leq CM|x-y|t^{-1}(1+|t-|y||)^{-2}\\
&\leq CM(|x-y|+1)(t+1)^{-1}(t-|y|+1)^{-1-\alpha}.
\end{align*}
Otherwise if $|x|\leq |y|\leq t\leq 1$ then
\begin{align*}
&|\mathcal{E}(t,x)-\mathcal{E}(t,y)|\leq CM \leq CM(|x-y|+1)(t+1)^{-1}(t-|y|+1)^{-1-\alpha}.\end{align*}
Hence by definition, we have
\begin{align*}
\|(\mathcal{E},0)\|_{1+\alpha}\leq CM.
\end{align*}
Combining with the above bound for $\|(\mathcal{E}, 0)\|_{0}$, we thus have shown that $$\|(\mathcal{E},0)\|\leq CM.$$
The estimates for the magnetic field $\mathcal{B}$ can be obtained in a similar way. 
\end{proof}
Once we have bound for the pure linear Maxwell field, we now can control the full linear Maxwell field $E_z^{*}$, relying only on the initial data.
\begin{Prop}
\label{prop:bd:Ezstar}
The full linear electric field $E_z^{*}$ defined in \eqref{eq:def:electric} verifies the following bound $$\|E_z^*-\mathcal{E}\|\leq C\varepsilon_0.$$
\end{Prop}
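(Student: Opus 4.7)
The object to bound is
$F(t,x) := E_z^*(t,x) - \mathcal{E}(t,x) = -\frac{1}{t}\int_{|y-x|=t}\int_{\R^3}\frac{\omega+\hat{v}}{1+\hat{v}\cdot\omega}f_0(y,v)\,dv\,dS_y$,
with $\omega=(y-x)/|y-x|$. My plan is to derive a single sharp pointwise decay estimate for $F$, from which both the $\|\cdot\|_0$ and $\|\cdot\|_{1+\alpha}$ halves of the $K$-norm follow immediately---without ever having to differentiate $f_0$, consistent with the announced improvement to $C^0$ smallness on the initial density.

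The first step is to extract the cancellation in the kernel. From $|\omega+\hat{v}|^2 = 2(1+\hat{v}\cdot\omega) - (1-|\hat{v}|^2) \leq 2(1+\hat{v}\cdot\omega)$ together with $1+\hat{v}\cdot\omega \geq \frac{1}{2}(1-|\hat{v}|^2) = \frac{1}{2(1+|v|^2)}$, I read off $\frac{|\omega+\hat{v}|}{1+\hat{v}\cdot\omega} \leq \sqrt{2}\,(1+\hat{v}\cdot\omega)^{-1/2} \leq 2\sqrt{1+|v|^2}$. Using $|f_0(y,v)|\leq \varepsilon_0(1+|y|+|v|)^{-q}$ and rescaling $v = (1+|y|)s$ (convergent since $q>4$), the inner integral is controlled by $\int_{\R^3}\frac{|\omega+\hat{v}|}{1+\hat{v}\cdot\omega}f_0(y,v)\,dv \leq C\varepsilon_0(1+|y|)^{4-q}$. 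Applying Lemma \ref{lem:bd:linear:Maxwell} with $k = q-4 \geq 5$ to this decaying function of $y$, and dividing by $t$, yields
\[
|F(t,x)| \leq C\varepsilon_0(1+t+|x|)^{-1}(1+|t-|x||)^{6-q}.
\]

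Since $q>9$, the exponent $q-6>3>1+\alpha$. Multiplying the pointwise bound by $(1+|t-|x||)(1+t+|x|)$ produces $C\varepsilon_0(1+|t-|x||)^{7-q}$, which is bounded by $C\varepsilon_0$, so $\|F\|_0 \leq C\varepsilon_0$. For the $\|\cdot\|_{1+\alpha}$ contribution I would use only the triangle inequality $|F(t,x)-F(t,y)|\leq|F(t,x)|+|F(t,y)|$ together with the observation that the denominator $|x-y|+1$ is bounded below by $1$. In the region $|x|\leq|y|\leq t$, using $|x|\geq 0$ to estimate $1+t+|x|\geq 1+t$ and $|x|\leq|y|$ to estimate $1+t-|x|\geq 1+t-|y|$, both $|F(t,x)|$ and $|F(t,y)|$ are majorized by $C\varepsilon_0(1+t)^{-1}(1+t-|y|)^{6-q}$; multiplying by $(t-|y|+1)^{1+\alpha}(t+1)$ collapses everything to $C\varepsilon_0(t-|y|+1)^{7+\alpha-q}$, bounded since $q > 7+\alpha$.

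The only genuinely substantive step is the kernel cancellation $\frac{|\omega+\hat{v}|}{1+\hat{v}\cdot\omega}=O(\sqrt{1+|v|^2})$; the rest is a routine application of Lemma \ref{lem:bd:linear:Maxwell} plus the observation that no H\"older estimate on $f_0$ is needed because the denominator $|x-y|+1$ absorbs the small-separation regime for free. The proposition leaves plenty of room in the decay rate $q>9$, indicating that the sharp threshold $q>9$ will be dictated by other estimates (on the nonlinear source terms) rather than by this purely linear bound.
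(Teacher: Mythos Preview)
Your proof is correct and follows essentially the same route as the paper: the kernel bound $\frac{|\omega+\hat v|}{1+\hat v\cdot\omega}\le 2\sqrt{1+|v|^2}$, integration in $v$ against the decay of $f_0$, then Lemma~\ref{lem:bd:linear:Maxwell} for the sphere integral, and finally the triangle inequality $|F(t,x)-F(t,y)|\le|F(t,x)|+|F(t,y)|$ together with $|x-y|+1\ge 1$ to handle the $\|\cdot\|_{1+\alpha}$ part. The only cosmetic difference is that you retain the sharp exponent, bounding the $v$-integral by $C\varepsilon_0(1+|y|)^{4-q}$ and invoking the lemma with $k=q-4$, whereas the paper throws away the extra decay, uses $(1+|y|)^{-4}$, and applies the lemma with $k=4$; both are more than enough since $q>9$.
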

\begin{proof}
Recall the representation formula for $E_z^*$ in \eqref{eq:def:electric}. Note that
\begin{align*}
\left|\frac{\omega+\hat{v}}{1+\hat{v}\cdot\omega}\right|\leq\frac{\sqrt{2}}{(1+\hat{v}\cdot\omega)^{1/2}}\leq 2\sqrt{1+|v|^2}.
\end{align*}
Therefore we derive that
\begin{align}
\label{Ez1}
|E_z^*(t,x)-\mathcal{E}(t,x)|\leq\frac{1}{t}\int_{|y-x|= t}\int_{\R^3}2\sqrt{1+|v|^2}f_0(y,v)dvdS_y.
\end{align}
By the assumption on the initial density distribution $f_0$, for $y\in\R^3$
 we have
 \begin{align*}
 \int_{\R^3}2\sqrt{1+|v|^2}f_0(y,v)dv&\leq\int_{\R^3}2(1+|v|)\varepsilon_0(1+|y|+|v|)^{-q}dv
  \leq C\varepsilon_0(1+|y|)^{-4}.
\end{align*}
Here recall that $q>9$. Then in view of
Lemma \ref{lem:bd:linear:Maxwell} applied to \eqref{Ez1} with $k=4$, we derive that
\begin{align*}
|E_z^*(t,x)-\mathcal{E}(t,x)|&\leq  C\varepsilon_0(1+t+|x|)^{-1}(1+|t-|x||)^{-2}\\&\leq  C\varepsilon_0(1+t+|x|)^{-1}(1+|t-|x||)^{-1},
\end{align*}
which implies $$\|(E_z^*-\mathcal{E},0)\|_0\leq  C\varepsilon_0. $$
For the mixed Lipschitz norm, for $|x|\leq |y|\leq t$,
 we can directly bound that
 \begin{align*}
 &|(E_z^*(t,x)-\mathcal{E}(t,x))-(E_z^*(t,y)-\mathcal{E}(t,y))|\\
 & \leq  |E_z^*(t,x)-\mathcal{E}(t,x)|+|E_z^*(t,y)-\mathcal{E}(t,y)|\\
 & \leq C\varepsilon_0(1+t+|x|)^{-1}(1+|t-|x||)^{-2}+C\varepsilon_0(1+t+|y|)^{-1}(1+|t-|y||)^{-2}\\ 
 &\leq C\varepsilon_0(|x-y|+1)(1+t)^{-1}(1+|t-|y||)^{-1-\alpha},
 \end{align*}
 which implies $\|(E_z-\mathcal{E},0)\|_{1+\alpha}\leq  C\varepsilon_0. $ Thus $\|(E_z-\mathcal{E},0)\|\leq  C\varepsilon_0. $
 \end{proof}
We next estimate the other part of the full electric field. First we write that
\begin{align}
\label{ETSk}
E_T^*(t,x)=\sum_{k=1}^{+\infty}E_{T,k}^*(t,x),\quad E_S^*(t,x)=\sum_{k=1}^{+\infty}E_{S,k}^*(t,x)
\end{align}
with
\begin{align*}
E_{T,k}^*(t,x)&=-\int_{|y-x|\leq t}\int_{\R^3}\frac{(\omega+\hat{v})(1-|\hat{v}|^2)}{(1+\hat{v}\cdot\omega)^2}f_{[k]}(t-|y-x|,y,v)\frac{dvdy}{|y-x|^2},\\
 E_{S,k}^*(t,x)&=-\int_{|y-x|\leq t}\int_{\R^3}\nabla_v\left[\frac{\omega+\hat{v}}{1+\hat{v}\cdot\omega}\right]\cdot(E+\hat{v}\times B)f_{[k]}|_{(t-|y-x|,y,v)}\frac{dvdy}{|y-x|}.
\end{align*}
Recall that $f_{[k]}(t,x,v)=0 $ for $|x|\geq t+2^{k+1}$. In particular
 $$E_{T,k}^*(t,x)=E_{S,k}^*(t,x)=0, \quad  \forall |x|\geq t+2^{k+1},\quad \forall k\geq 1. $$
We now can prove the decay estimates for the perturbation part of the Maxwell field.
\begin{Prop}
\label{prop:decay:Ek:pt}
For positive integer $k$, let $\Lambda_{k,i}$ be constants defined in Proposition \ref{prop:bd:averaged:f}.  Then we have the following pointwise decay estimates
\begin{align*}
|E_{T,k}^*(t,x)|&\leq C2^{2k}\Lambda_{k,1}^5\Lambda_{k,2}^2\Lambda_{k,3}^2(|x|+t+1)^{-2}\|f_{0,k}\|_0,
\\|E_{S,k}^*(t,x)|&\leq \frac{C2^{4k}\Lambda_{k,1}^4\Lambda_{k,2}\Lambda_{k,3}\|K\|_0\|f_{0,k}\|_0}{(|t-|x||+1)(|x|+t+1)},\quad \forall t\geq 0, \quad x\in \R^3.
\end{align*}
\end{Prop}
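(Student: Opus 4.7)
My plan for both bounds is the same two-step strategy: first bound the velocity kernel pointwise on the support of $f_{[k]}$, then estimate the resulting space-time integral on the backward null cone using Lemma~\ref{lem:decay:fk}. For the pointwise kernel bounds, using $|\omega+\hat v|^2\leq 2(1+\hat v\cdot\omega)$ together with $1+\hat v\cdot\omega\geq (1-|\hat v|^2)/2$, I would first show
\[
\left|\frac{(\omega+\hat v)(1-|\hat v|^2)}{(1+\hat v\cdot\omega)^2}\right|\leq C\sqrt{1+|v|^2},
\]
and using $\nabla_v\hat v=(1+|v|^2)^{-1/2}(I-\hat v\otimes\hat v)$ together with $(1+\hat v\cdot\omega)^{-1}\leq 2(1+|v|^2)$ a direct computation gives
\[
\left|\nabla_v\!\left[\frac{\omega+\hat v}{1+\hat v\cdot\omega}\right]\right|\leq C(1+|v|^2).
\]
Lemma~\ref{lem:bd:support:fk:v} provides $1+|v|\leq C\Lambda_{k,1}$ on the support of $f_{[k]}$, so the two kernels are dominated by $C\Lambda_{k,1}$ and $C\Lambda_{k,1}^2$ respectively.

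For $E^*_{T,k}$, substituting the $T$-kernel bound and applying Lemma~\ref{lem:decay:fk} with $p=2$ gives
\[
|E^*_{T,k}(t,x)|\leq C\Lambda_{k,1}\!\!\int_{|y-x|\leq t}\!\!\int\frac{f_{[k]}(t-|y-x|,y,v)\,dv\,dy}{|y-x|^2}\leq C2^{2k}\Lambda_{k,1}^{5}\Lambda_{k,2}^{2}\Lambda_{k,3}^{2}\|f_{0,k}\|_0(2^k+t)^{-2}.
\]
Since $f_{[k]}(s,y,v)=0$ for $|y|\geq s+2^{k+1}$, the function $E^*_{T,k}(t,\cdot)$ vanishes whenever $|x|\geq t+2^{k+1}$, and on this support $t+|x|+1\leq C(2^k+t)$, which converts $(2^k+t)^{-2}$ to $(t+|x|+1)^{-2}$ and completes the $T$-bound.

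For $E^*_{S,k}$, I would use $|E+\hat v\times B|(s,y)\leq \|K\|_0/[(|s-|y||+1)(s+|y|+1)]$ to reduce matters to bounding
\[
\mathcal J(t,x):=\int_{|y-x|\leq t}\!\!\int\frac{f_{[k]}(s,y,v)\,dv\,dy}{|y-x|(|s-|y||+1)(s+|y|+1)}
\]
by a constant multiple of $\|f_{0,k}\|_0 2^{4k}\Lambda_{k,1}^{2}\Lambda_{k,2}\Lambda_{k,3}/[(|t-|x||+1)(t+|x|+1)]$. My approach is to split $\{|y-x|\leq t\}$ into an \emph{inner region} $|y-x|\leq (|t-|x||+1)/4$, where elementary triangle-inequality estimates yield $s+|y|\geq c(t+|x|+1)$ and $|s-|y||+1\geq c(|t-|x||+1)$ so the decay factors pull out and Lemma~\ref{lem:decay:fk} handles the remainder, and an \emph{outer region} $|y-x|>(|t-|x||+1)/4$, where the large $|y-x|$ itself supplies the factor $(|t-|x||+1)^{-1}$ while the support constraint $|y|\leq s+2^{k+1}$ (hence $|y-x|\leq t+2^{k+1}$) together with a second application of Lemma~\ref{lem:decay:fk} supplies the remaining $(t+|x|+1)^{-1}$.

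The main obstacle is the last step. The denominator $(|s-|y||+1)(s+|y|+1)$ along the backward null cone $s=t-|y-x|$ does not uniformly dominate the target factor $(|t-|x||+1)(t+|x|+1)$ at the apex, so the region decomposition must be compatible with both the cone geometry and the support of $f_{[k]}$. The exponent count in the $S$-term is also delicate: the naive combination of the kernel bound $C\Lambda_{k,1}^2$ with Lemma~\ref{lem:decay:fk} at $p=1$ overshoots by one factor of $\Lambda_{k,1}$, so the proof must either exploit a cancellation between the two regions or invoke a refined charge estimate based on the $(1+\hat v\cdot\omega)$-weighted conservation of momentum \eqref{1.3} to save the extra power of $\Lambda_{k,1}$ in the direction $\hat v\cdot\omega\to -1$.
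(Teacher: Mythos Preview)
Your treatment of $E_{T,k}^*$ is correct and coincides with the paper's proof.

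For $E_{S,k}^*$ there is a genuine gap, and it lies exactly where you suspect: the kernel bound. Your estimate $\bigl|\nabla_v[(\omega+\hat v)/(1+\hat v\cdot\omega)]\bigr|\leq C(1+|v|^2)$ is one power of $\sqrt{1+|v|^2}$ too crude, and this is the sole source of the ``extra $\Lambda_{k,1}$'' you are trying to recover by region decompositions or refined momentum identities. Write the quotient as $(\sqrt{1+|v|^2}\,\omega+v)/(\sqrt{1+|v|^2}+v\cdot\omega)$ and differentiate directly in $v$: the two terms are
\[
\frac{\hat v_j\omega_i+\delta_{ij}}{\sqrt{1+|v|^2}\,(1+\hat v\cdot\omega)}
\quad\text{and}\quad
\frac{(\omega_i+\hat v_i)(\omega_j+\hat v_j)}{\sqrt{1+|v|^2}\,(1+\hat v\cdot\omega)^2}.
\]
In the second term the numerator is $\leq|\omega+\hat v|^2\leq 2(1+\hat v\cdot\omega)$, so \emph{both} terms are bounded by $C/[\sqrt{1+|v|^2}\,(1+\hat v\cdot\omega)]\leq C\sqrt{1+|v|^2}$. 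On $\mathrm{supp}\,f_{[k]}$ this gives only $C\Lambda_{k,1}$, not $C\Lambda_{k,1}^2$.

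With this in hand the paper's route is far simpler than your inner/outer split. From the $\|K\|_0$ decay keep only the factor $(s+|y|+1)^{-1}$ with $s=t-|y-x|$, discard $(|s-|y||+1)^{-1}\leq 1$, and use the elementary triangle-inequality bound $s+|y|+1\geq |t-|x||+1$ valid for all $|y-x|\leq t$. This pulls $(|t-|x||+1)^{-1}$ out of the integral, after which Lemma~\ref{lem:decay:fk} with $p=1$ yields the factor $2^{4k}\Lambda_{k,1}^{3}\Lambda_{k,2}\Lambda_{k,3}\|f_{0,k}\|_0(2^k+t)^{-1}$. The support constraint $|x|\leq t+2^{k+1}$ converts $(2^k+t)^{-1}$ to $(t+|x|+1)^{-1}$ exactly as in the $T$-case. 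No region decomposition, no cancellation, and no appeal to the weighted conservation law beyond what is already inside Lemma~\ref{lem:decay:fk} is needed.
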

\begin{proof}
For $E_{T, k}^*$, one can show that
\begin{align}
\label{om1}
\frac{|\omega+\hat{v}|(1-|\hat{v}|^2)}{(1+\hat{v}\cdot\omega)^2}\leq \frac{3\sqrt{3}}{4}\sqrt{1+|v|^2}.
\end{align}
For details regarding this inequality, we refer to equation (1.27) in \cite{Schaeffer:VM:notcompat:p}. Then by using the bound for the velocity in Lemma \ref{lem:bd:support:fk:v}, we can show that
\begin{align*}
|E_{T,k}^*(t,x)|&\leq C\int_{|y-x|\leq t}\int_{\R^3}\sqrt{1+|v|^2}f_{[k]}(t-|y-x|,y,v)\frac{dvdy}{|y-x|^2}\\&\leq C\Lambda_{k,1}\int_{|y-x|\leq t}\int_{\R^3}f_{[k]}(t-|y-x|,y,v)\frac{dvdy}{|y-x|^2}.
\end{align*}
Then by Lemma \ref{lem:decay:fk} with $p=2$, we derive that
\begin{align*}
|E_{T,k}^*(t,x)|&\leq C2^{2k}\Lambda_{k,1}^5\Lambda_{k,2}^2\Lambda_{k,3}^2(2^k+t)^{-2}\|f_{0,k}\|_0.
\end{align*}
The first inequality of the proposition then
follows as  $E_{T,k}^*(t,x)$ vanishes when $|x|\geq t+2^{k+1}.$

For the second inequality, again we rely on the following bound
\begin{align*}
\left|\nabla_v\left[\frac{\omega+\hat{v}}{1+\hat{v}\cdot\omega}\right]\right|\leq C\sqrt{1+|v|^2},
\end{align*}
which could be found, for example in \cite{Glassey87:VM:Hivelocity}.
Then using  Lemma \ref{lem:bd:support:fk:v}, we can show that
\begin{align*}
\numberthis\label{ESk}
|E_{S,k}^*(t,x)|&\leq C\int_{|y-x|\leq t}\int_{\R^3}\sqrt{1+|v|^2}(|E|+|B|)f_{[k]}|_{(t-|y-x|,y,v)}\frac{dvdy}{|y-x|}\\&\leq C\Lambda_{k,1}\|K\|_0\int_{|y-x|\leq t}\int_{\R^3}\frac{f_{[k]}(t-|y-x|,y,v)}{t-|y-x|+|y|+1}\frac{dvdy}{|y-x|}\\
&\leq \frac{C\Lambda_{k,1}\|K\|_0}{|t-|x||+1}\int_{|y-x|\leq t}\int_{\R^3}{f_{[k]}(t-|y-x|,y,v)}\frac{dvdy}{|y-x|} .
\end{align*}
Here the last step follows from the fact
\begin{align*}
t-|y-x|+|y|+1\geq |t-|x||+1,\quad \forall |y-x|\leq t.
\end{align*}
Then by Lemma \ref{lem:decay:fk} with $p=1$, we have
\begin{align*}
|E_{S,k}^*(t,x)|&\leq \frac{C2^{4k}\Lambda_{k,1}^4\Lambda_{k,2}\Lambda_{k,3}\|K\|_0\|f_{0,k}\|_0}{(|t-|x||+1)(2^k+t)}.
\end{align*}
The second inequality then follows by noting that $E_{S,k}^*(t,x)$ is supported on $\{|x|\leq t+2^{k+1}\}.$
\end{proof}

As a consequence of the above rough decay estimates, we now are able to control the $\|\cdot\|_{0}$ norm of the pure perturbation part of the Maxwell field.
\begin{Prop}
\label{prop:PureMaxwell:0norm}
If $ \Lambda> 2$, $K\in \mathcal{K}_{\Lambda}$,  then
\begin{align*}
\|(E_{S}^*,0)\|_0+\|(E_{T}^*,0)\|_0\leq C\Lambda^9(\ln\Lambda)^{11}\varepsilon_0.
\end{align*}
\end{Prop}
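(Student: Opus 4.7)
The plan is to convert the pointwise bounds from Proposition \ref{prop:decay:Ek:pt} into the $\|\cdot\|_0$ norm piece by piece, then sum in $k$, using the decay $\|f_{0,k}\|_0\leq 2^{(2-k)q}\varepsilon_0$ together with $q>9$ to absorb all the $2^k$-growth coming from $\Lambda_{k,1},\Lambda_{k,2},\Lambda_{k,3}$.

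First I would observe that $E_{T,k}^*$ and $E_{S,k}^*$ are supported in $\{|x|\leq t+2^{k+1}\}$, and more importantly the trivial inequality $|t-|x||+1\leq t+|x|+1$ lets me upgrade the pure $(t+|x|+1)^{-2}$ decay of $E_{T,k}^*$ to the product form $(t+|x|+1)^{-1}(|t-|x||+1)^{-1}$ needed in the definition of $\|\cdot\|_0$. Thus Proposition \ref{prop:decay:Ek:pt} yields
\begin{align*}
\|(E_{T,k}^*,0)\|_0 &\leq C\, 2^{2k}\Lambda_{k,1}^5\Lambda_{k,2}^2\Lambda_{k,3}^2\|f_{0,k}\|_0,\\
\|(E_{S,k}^*,0)\|_0 &\leq C\, 2^{4k}\Lambda_{k,1}^4\Lambda_{k,2}\Lambda_{k,3}\|K\|_0\|f_{0,k}\|_0.
\end{align*}

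Next I would estimate the $\Lambda_{k,i}$ for $K\in\mathcal{K}_{\Lambda}$ with $\Lambda>2$. Since $\|K\|_0\leq\Lambda$ and $\|K\|_{1+\alpha}\leq\Lambda^2$, and $k\geq 1$, I get the uniform bounds
\begin{align*}
\Lambda_{k,1}\leq C\, 2^{k}\Lambda\ln\Lambda,\qquad
\Lambda_{k,2}\leq C\, 2^{k}(\Lambda\ln\Lambda)^{2},\qquad
\Lambda_{k,3}\leq C\, k\ln\Lambda,
\end{align*}
where for the first two I am using that for $k\geq 1$ both $2^k$ and the $K$-dependent term are $\leq 2^k\cdot(\text{the }K\text{-dependent constant}+1)$, and for the last I just use $\ln(1+\Lambda+\Lambda^2)\leq C\ln\Lambda$ and $k+1\leq 2k$. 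Substituting,
\begin{align*}
\Lambda_{k,1}^5\Lambda_{k,2}^2\Lambda_{k,3}^2 &\leq C\, 2^{7k}k^{2}\Lambda^{9}(\ln\Lambda)^{11},\\
\Lambda_{k,1}^4\Lambda_{k,2}\Lambda_{k,3} &\leq C\, 2^{5k}k\,\Lambda^{6}(\ln\Lambda)^{7}.
\end{align*}

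Finally I would sum over $k$. Using $\|f_{0,k}\|_0\leq 2^{(2-k)q}\varepsilon_0$ and $\|K\|_0\leq\Lambda$,
\begin{align*}
\|(E_T^*,0)\|_0 &\leq \sum_{k=1}^{\infty}\|(E_{T,k}^*,0)\|_0\leq C\Lambda^{9}(\ln\Lambda)^{11}\varepsilon_0\sum_{k=1}^{\infty}k^{2}\,2^{(9-q)k},\\
\|(E_S^*,0)\|_0 &\leq \sum_{k=1}^{\infty}\|(E_{S,k}^*,0)\|_0\leq C\Lambda^{7}(\ln\Lambda)^{7}\varepsilon_0\sum_{k=1}^{\infty}k\,2^{(9-q)k}.
\end{align*}
Both series converge thanks to the assumption $q>9$, which is precisely the decay rate the theorem's hypothesis gives us. Combining the two and using $\Lambda^7(\ln\Lambda)^7\leq\Lambda^9(\ln\Lambda)^{11}$ for $\Lambda>2$ produces the stated bound. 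The only delicate point is the arithmetic of the exponents: the factors $2^{2k}$ (resp.\ $2^{4k}$) from Proposition \ref{prop:decay:Ek:pt} combine with the $2^{7k}$ (resp.\ $2^{5k}$) from $\Lambda_{k,1}^5\Lambda_{k,2}^2$ (resp.\ $\Lambda_{k,1}^4\Lambda_{k,2}$) to give exactly $2^{9k}$ in each case, so $q>9$ is the sharp threshold for summability and is the one real obstacle in verifying this proposition.
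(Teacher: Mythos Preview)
Your proposal is correct and follows essentially the same approach as the paper: you invoke Proposition~\ref{prop:decay:Ek:pt}, bound the $\Lambda_{k,i}$ exactly as the authors do ($\Lambda_{k,1}\le C2^k\Lambda\ln\Lambda$, $\Lambda_{k,2}\le C2^k(\Lambda\ln\Lambda)^2$, $\Lambda_{k,3}\le Ck\ln\Lambda$), and sum the resulting $k^22^{(9-q)k}$ and $k\,2^{(9-q)k}$ series using $q>9$. The only cosmetic difference is that you pass through the $\|\cdot\|_0$ norm of each piece before summing, whereas the paper sums the pointwise bounds first and then reads off the norm; the arithmetic and the final exponents $\Lambda^9(\ln\Lambda)^{11}$ and $\Lambda^7(\ln\Lambda)^7$ are identical.
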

\begin{proof}
For positive integer $k$, recall the constants $\Lambda_{k,i}$ given in Proposition \ref{prop:bd:averaged:f}.  In particular
\begin{align*}
\Lambda_{k,i}&=2^{k}+ (\|K\|_0\ln(2+ \|K\|_0))^{i}\leq 2^{k}+ (\Lambda\ln(2+ \Lambda))^{i}\leq C(\Lambda\ln\Lambda)^{i}2^k,\quad i=1, 2,\\
 \Lambda_{k,3}&=\ln(1+\|K\|_{1+\alpha}+\|K\|_{0})+k+1\leq \ln(1+\Lambda^2+\Lambda)+k+1\leq Ck\ln\Lambda.
\end{align*}
Now by the previous Proposition \ref{prop:decay:Ek:pt} and the fact that
$$\|f_{0,k}\|_0\leq 2^{(2-k)q}\varepsilon_0,$$
we show that
\begin{align*}
|E_{T,k}^*(t,x)|&\leq C2^{2k}\Lambda_{k,1}^5\Lambda_{k,2}^2\Lambda_{k,3}^2(|x|+t+1)^{-2}\|f_{0,k}\|_0\\&\leq C2^{2k}(2^k\Lambda\ln\Lambda)^5((\Lambda\ln\Lambda)^22^k)^2(k\ln\Lambda)^2(|x|+t+1)^{-2}2^{-kq}\varepsilon_0\\&= Ck^22^{(9-q)k}\Lambda^9(\ln\Lambda)^{11}(|x|+t+1)^{-2}\varepsilon_0,
\\
|E_{S,k}^*(t,x)|&\leq \frac{C2^{4k}\Lambda_{k,1}^4\Lambda_{k,2}\Lambda_{k,3}\|K\|_0\|f_{0,k}\|_0}{(|t-|x||+1)(|x|+t+1)}\\&\leq \frac{C2^{4k}(2^k\Lambda\ln\Lambda)^4(\Lambda\ln\Lambda)^22^k(k\ln\Lambda)\Lambda2^{-kq}\varepsilon_0}{(|t-|x||+1)(|x|+t+1)}\\&= \frac{Ck2^{(9-q)k}\Lambda^7(\ln\Lambda)^{7}\varepsilon_0}{(|t-|x||+1)(|x|+t+1)}.
\end{align*}
Here the constant $C$ relies only on $q$.  By the decomposition \eqref{ETSk} and the assumption that $q>9 $, we conclude that
\begin{align*}
|E_{T}^*(t,x)|&\leq\sum_{k=1}^{+\infty}|E_{T,k}^*(t,x)| \\
&\leq\sum_{k=1}^{+\infty}Ck^22^{(9-q)k}\Lambda^9(\ln\Lambda)^{11}(|x|+t+1)^{-2}\varepsilon_0\\&\leq C\Lambda^9(\ln\Lambda)^{11}(|x|+t+1)^{-2}\varepsilon_0\\
&\leq C\Lambda^9(\ln\Lambda)^{11}(|t-|x||+1)^{-1}(|x|+t+1)^{-1}\varepsilon_0,
\\|E_{S}^*(t,x)|&\leq\sum_{k=1}^{+\infty}|E_{S,k}^*(t,x)| \\
&\leq\sum_{k=1}^{+\infty} \frac{Ck2^{(9-q)k}\Lambda^7(\ln\Lambda)^{7}\varepsilon_0}{(|t-|x||+1)(|x|+t+1)}\\
&\leq\frac{C\Lambda^7(\ln\Lambda)^{7}\varepsilon_0}{(|t-|x||+1)(|x|+t+1)}.
\end{align*}
These estimates lead to
\begin{align*}\|(E_{S}^*,0)\|_0+\|(E_{T}^*,0)\|_0\leq C\Lambda^7(\ln\Lambda)^{7}\varepsilon_0+C\Lambda^9(\ln\Lambda)^{11}\varepsilon_0\leq C\Lambda^9(\ln\Lambda)^{11}\varepsilon_0.
\end{align*}
This completes the proof for the proposition.
\end{proof}

To close the argument, we also need to control the mixed Lipschitz norm $\|\cdot\|_{1+\a}$ of the Maxwell field. This requires refined decay estimates.
\begin{Prop}
\label{prop:PureMaxwell:anorm:ESK}
For positive integer $k$, recall constants $\Lambda_{k,i}$ defined in Proposition \ref{prop:bd:averaged:f}. Then for $t\geq 0,\ x\in\R^3,\ |x|\leq t$ we have the improved decay estimate
\begin{align*}
|E_{S,k}^*(t,x)|&\leq \frac{C\Lambda_{k,1}^6\Lambda_{k,2}^3\Lambda_{k,3}^3\|K\|_0\|f_{0,k}\|_0\ln(t-|x|+2)}{(t-|x|+1)^2(|x|+t+1)}.
\end{align*}
\end{Prop}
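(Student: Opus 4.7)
The plan is to revisit the derivation of estimate \eqref{ESk} but now exploit the full decay $|E+\hat{v}\times B|\leq 2\|K\|_{0}/[(|s-|y||+1)(s+|y|+1)]$, keeping both factors rather than dropping $(|s-|y||+1)^{-1}$. Arguing exactly as in the proof of Proposition \ref{prop:decay:Ek:pt} and then applying the geometric bound $s+|y|+1\geq t-|x|+1$ (valid in the interior $|x|\leq t$) yields
$$
|E_{S,k}^{*}(t,x)|\leq \frac{C\Lambda_{k,1}\|K\|_{0}}{t-|x|+1}\int_{|y-x|\leq t}\int_{\R^{3}}\frac{f_{[k]}(s,y,v)}{(|s-|y||+1)|y-x|}\,dv\,dy,\quad s=t-|y-x|.
$$
It remains to show that the right-hand integral is bounded by $C\Lambda_{k,1}^{5}\Lambda_{k,2}^{3}\Lambda_{k,3}^{3}\|f_{0,k}\|_{0}\ln(t-|x|+2)/[(t-|x|+1)(t+|x|+1)]$ (up to absorbing one more $\Lambda_{k,3}$).

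To this end I would substitute the pointwise bound $\int f_{[k]}(s,y,v)\,dv\lesssim \Lambda_{k,1}^{5}\Lambda_{k,2}^{3}\Lambda_{k,3}^{3}\|f_{0,k}\|_{0}(2^{k}+s+|y|)^{-3}$ from Proposition \ref{prop:bd:averaged:f} and compute the remaining spatial integral explicitly. Writing $y=x+r\omega$ with $r=|y-x|$, $\omega\in S^{2}$, and using the spherical identity $\int_{S^{2}}h(|y|)\,d\omega=\frac{2\pi}{r|x|}\int_{|r-|x||}^{r+|x|}u\,h(u)\,du$, followed by the null-type change of variables $\alpha=s-|y|$, $\beta=s+|y|$ with Jacobian $\tfrac{1}{2}$, reduces the problem to a 2D integral. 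The decisive structural input is the support property $f_{[k]}(s,y,\cdot)=0$ for $|y|>s+2^{k+1}$ (established just before Lemma \ref{lem:bd:support:fk:v}), which translates to $\alpha\geq -2^{k+1}$ and truncates the $\alpha$-range to $[-2^{k+1},\,t-|x|]$.

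The inner integration gives $\int_{-2^{k+1}}^{t-|x|}(\beta-\alpha)(|\alpha|+1)^{-1}\,d\alpha\lesssim \beta[\ln(2^{k+1}+1)+\ln(t-|x|+1)]$ plus additive lower-order terms; this is precisely where the $\ln(t-|x|+2)$ factor is born. The unwanted piece $\ln(2^{k+1}+1)\leq C(k+1)\leq C\Lambda_{k,3}$ is absorbed into the prefactor, using $\ln(t-|x|+2)\geq\ln 2$ to write $k+\ln(t-|x|+2)\leq C\Lambda_{k,3}\ln(t-|x|+2)$. The outer integration $\int_{t-|x|}^{t+|x|}\beta(2^{k}+\beta)^{-3}\,d\beta\lesssim |x|/[(2^{k}+t-|x|)(2^{k}+t+|x|)]$, after division by $|x|$ and the use of $2^{k}\geq 1$, produces the expected $(t-|x|+1)^{-1}(t+|x|+1)^{-1}$. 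Combining with the already-extracted $(t-|x|+1)^{-1}$ gives the claimed bound.

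The crucial step, and the place where I expect the main obstacle to lie, is the support truncation $\alpha\geq -2^{k+1}$: without it, the $\alpha$-integration over $[-\beta,t-|x|]$ would produce a spurious $\ln(\beta+1)\sim\ln(t+|x|+2)$ instead of $\ln(t-|x|+2)$, destroying the improvement over Proposition \ref{prop:decay:Ek:pt}. The remaining tasks, namely the explicit inner and outer integrations and the careful bookkeeping of the powers of $\Lambda_{k,1}$, $\Lambda_{k,2}$, $\Lambda_{k,3}$, are routine once this observation is in place.
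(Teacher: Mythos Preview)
Your approach is correct and delivers a bound of the stated shape, but with one extra factor of $\Lambda_{k,3}$ (as you yourself note). The paper's route is different and avoids this loss. Rather than pulling the factor $(s+|y|+1)^{-1}$ outside as $(t-|x|+1)^{-1}$ at the start, the paper keeps \emph{both} field-decay factors inside the integral and only then inserts the charge-density bound, using $(2^{k}+s+|y|)^{-3}\leq (s+|y|+1)^{-3}$. In the null coordinate $v=s+|y|$ this produces $(v+1)^{-4}$; after the Jacobian factor $\lambda\leq v+1$ one is left with $(v+1)^{-3}(|u|+1)^{-1}$. The $u$-integral is taken over the \emph{full} range $[-v,\,t-|x|]$, with no support truncation, and does yield $\ln(v+1)+\ln(t-|x|+1)$ --- but the extra power of $(v+1)^{-1}$ that was retained now absorbs the $\ln(v+1)$ via the elementary bound $\ln(v+1)/(v+1)\leq C\ln(t-|x|+2)/(t-|x|+1)$ for $v\geq t-|x|$. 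The outer $v$-integral of $(v+1)^{-2}$ then closes exactly as in your computation.

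So the step you flag as ``crucial'' (the support cutoff $\alpha\geq -2^{k+1}$) is in fact unnecessary: it is a workaround for a loss incurred by extracting $(s+|y|+1)^{-1}$ too early. Keeping that factor inside gives the Proposition with the stated power $\Lambda_{k,3}^{3}$ directly. Your version is still adequate for the downstream application in Proposition~\ref{prop:PureMaxwell:anorm:bound} (the extra $\Lambda_{k,3}\leq Ck\ln\Lambda$ is harmless when summed against $2^{(9-q)k}$), but it does not prove the Proposition exactly as written.
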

\begin{proof}
 In view of the proof for Proposition \ref{prop:decay:Ek:pt} (equation \eqref{ESk}) and the definition of $\|K\|_0 $,  using the decay estimate for the charge density in Proposition \ref{prop:bd:averaged:f}, we can show that
\begin{align*}
 |E_{S,k}^*(t,x)|
& \leq C\Lambda_{k,1}\|K\|_0\int_{|y-x|\leq t}\int_{\R^3}\frac{f_{[k]}(t-|y-x|,y,v)}{(t-|y-x|+|y|+1)(|t-|y-x|-|y||+1)}\frac{dvdy}{|y-x|}\\
&\leq C\int_{|y-x|\leq t}\frac{\Lambda_{k,1}^6\Lambda_{k,2}^3\Lambda_{k,3}^3\|K\|_0\|f_{0,k}\|_0}{(t-|y-x|+|y|+1)^4(|t-|y-x|-|y||+1)}\frac{dy}{|y-x|}.
\end{align*}
Fix $r=|x|\leq t$ and define parameters $\la=|y|$, $\tau=t-|x-y|$. Changing variables for the above integration on the ball $|y-x|\leq t$ to the new parameters $(\la, \tau, \zeta)$, then integrate on the third variable. The above triple integral can be reduced to the following double integral
\begin{align}
\label{ESk1}
|E_{S,k}^*(t,x)|&\leq \frac{C\Lambda_{k,1}^6\Lambda_{k,2}^3\Lambda_{k,3}^3\|K\|_0\|f_{0,k}\|_0}{r}\int_0^t\int_a^b\frac{\lambda d\lambda d\tau}{(\tau+\lambda+1)^4(|\tau-\lambda|+1)}
\end{align}
with $a=|r-t+\tau|,\ b=r+t-\tau.$
For details about this procedure, we refer to Lemma 7 in \cite{Glassey87:VM:absence}.
  Let $u=\tau-\lambda,\ v=\tau+\lambda.$ Then the integral domain is
\begin{align*}
&\{0< \tau<t,\ a=|r-t+\tau|<\lambda<b=r+t-\tau\}\\=&\{0< \tau<t,\ -\lambda<r-t+\tau<\lambda<r+t-\tau\}\\=&\{0< \tau<t,\ t-r<\lambda+\tau,\ \tau-\lambda<t-r,\ \lambda+\tau<r+t\}\\=&\{0< (u+v)/2<t,\ t-r<v<r+t,\ u<t-r\}\\=&\{t-r<v<r+t,\ -v<u<t-r\}.
\end{align*}
Since $\lambda\leq\tau+\lambda+1 $, change variables $u=\tau-\lambda,\ v=\tau+\lambda$. We can show that
\begin{align*}
\int_0^t\int_a^b\frac{\lambda d\lambda d\tau}{(\tau+\lambda+1)^4(|\tau-\lambda|+1)}
&\leq \int_0^t\int_a^b\frac{ d\lambda d\tau}{(\tau+\lambda+1)^3(|\tau-\lambda|+1)}\\
&=\int_{t-r}^{r+t}\int_{-v}^{t-r}\frac{ du dv}{2(v+1)^3(|u|+1)}\\
&=\int_{t-r}^{r+t}\frac{ (\ln(v+1)+\ln(t-r+1)) dv}{2(v+1)^3}\\
&\leq C\frac{\ln(t-r+2)}{t-r+1}\int_{t-r}^{r+t}\frac{  dv}{2(v+1)^2}\\
&=\frac{C r\ln(t-r+2)}{(t-r+1)^2(r+t+1)}.
\end{align*}
This leads to
\begin{align*}
|E_{S,k}^*(t,x)|&\leq \frac{C\Lambda_{k,1}^6\Lambda_{k,2}^3\Lambda_{k,3}^3\|K\|_0\|f_{0,k}\|_0\ln(t-r+2)}{(t-r+1)^2(r+t+1)}.
\end{align*}
And the Proposition holds.
\end{proof}
Next we improve the bound for $E_{T, k}^{*}$.
\begin{Prop}
\label{prop:PureMaxwell:ETk:impr}
For positive integer $k$, let $\Lambda_{k,i}$ be constants defined in Proposition \ref{prop:bd:averaged:f}.  Then for $t\geq 2,\ x,y\in\R^3$ such that $|x|\leq|y|\leq t$, we have
\begin{align*}
\frac{|E_{T,k}^*(t,x)-E_{T,k}^*(t,y)|}{|x-y|+1}\leq & C(\|K\|_0+1)2^{2k}\Lambda_{k,1}^7\Lambda_{k,2}^2\Lambda_{k,3}^2\|f_{0,k}\|_0t^{-2}(t-|y|+1)^{-1}
\\&+ C\|f_{0,k}\|_0\Lambda_{k,1}^8\Lambda_{k,2}^3\Lambda_{k,3}^3t^{-3}\ln(t).
\end{align*}
\end{Prop}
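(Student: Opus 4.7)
The plan is to produce a uniform bound on $|\nabla_x E_{T,k}^*(t,\gamma(s))|$ along the segment $\gamma(s) = (1-s)x + sy$, $s\in [0,1]$, and then conclude by the mean value theorem. Since $|x|\leq|y|$, one has $|\gamma(s)|\leq|y|$, so $(t-|\gamma(s)|+1)^{-1}\leq (t-|y|+1)^{-1}$ uniformly in $s$, which is what allows the interior decay factor to survive the supremum. The main difficulty is that the smallness hypothesis on $f_0$ is only in $C^0$, so any derivative that falls on $f_{[k]}$ when differentiating the integral formula must be transferred off via the Vlasov equation followed by integration by parts.

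First I would rewrite the integral via $z = y' - x$:
\[
E_{T,k}^*(t,x) = -\int_{|z|\leq t}\int_{\R^3}\Phi(z,v)\,f_{[k]}(t-|z|,x+z,v)\,\frac{dv\,dz}{|z|^2},
\]
with $\Phi(z,v) = (\hat z + \hat v)(1-|\hat v|^2)/(1+\hat v\cdot\hat z)^2$ independent of $x$. Differentiating in $x$ places $\nabla_{y'} f_{[k]}$ in the integrand. Using the chain-rule identity $\nabla_{y'} f_{[k]} = \nabla_z[f_{[k]}(t-|z|,x+z,v)] + \hat z\,\partial_t f_{[k]}$ together with the Vlasov equation $\partial_t f_{[k]} = -\hat v\cdot\nabla_{y'} f_{[k]} - (E+\hat v\times B)\cdot\nabla_v f_{[k]}$, one solves for $\nabla_{y'} f_{[k]}$ as a linear combination of $\nabla_z[f_{[k]}(t-|z|,x+z,v)]$ and $(E+\hat v\times B)\cdot\nabla_v f_{[k]}$, with coefficients controlled by $\sqrt{1+|v|^2}\leq C\Lambda_{k,1}$ (the Glassey--Strauss decomposition underlying \eqref{eq:def:electric}).

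Inserting this decomposition into $\nabla_x E_{T,k}^*$ and integrating by parts---in $z$ for the $\nabla_z$-piece (producing a bulk kernel of order $|z|^{-3}$ and a surface integral on $\{|z|=t\}$), and in $v$ for the $\nabla_v$-piece (no boundary contribution thanks to the compact $v$-support from Lemma \ref{lem:bd:support:fk:v})---yields
\[
\nabla_x E_{T,k}^*(t,x) = \mathrm{I}_T + \mathrm{I}_S + \mathrm{I}_\partial,
\]
where $\mathrm{I}_T$ is an $E_T$-type integral with an additional $|z|^{-1}$ factor, $\mathrm{I}_S$ is an $E_S$-type integral with an extra $|z|^{-1}$ and a field factor $E+\hat v\times B$, and $\mathrm{I}_\partial$ is a surface contribution depending only on $f_{0,k}$. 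The piece $\mathrm{I}_S$ is handled by bounding $|E|+|B|\leq\|K\|_0(|t-|y'||+1)^{-1}(t+|y'|+1)^{-1}$ and then applying the $(\lambda,\tau)=(|y'|,\,t-|y'-x|)$ change of variables used in Proposition \ref{prop:PureMaxwell:anorm:ESK}; a direct computation extracts the $(t-|y|+1)^{-1}$ factor, while Lemma \ref{lem:decay:fk} supplies the prefactor $\Lambda_{k,1}^7\Lambda_{k,2}^2\Lambda_{k,3}^2\|f_{0,k}\|_0 t^{-2}$, producing the first summand of the target estimate. The boundary term $\mathrm{I}_\partial$, supported on $\{|y'-x|=t,\,|y'|\leq 2^{k+1}\}$, is controlled by $\|f_{0,k}\|_0$ together with the explicit surface measure, and is readily absorbed into either summand on the right-hand side.

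The remaining piece $\mathrm{I}_T$, which carries no explicit $\|K\|_0$, is responsible for the second summand. Its kernel of order $|z|^{-3}$ puts the analysis at $p=3$, just beyond the range $0\leq p\leq 2$ covered by Lemma \ref{lem:decay:fk}; the same $(\lambda,\tau)$-computation that produced the $\ln(t-r+2)$ factor in Proposition \ref{prop:PureMaxwell:anorm:ESK} now yields the $t^{-3}\ln t$ behavior with the slightly heavier weights $\Lambda_{k,1}^8\Lambda_{k,2}^3\Lambda_{k,3}^3$ (one extra power of $\Lambda_{k,1}$ coming from the $\sqrt{1+|v|^2}$ absorbed during the algebraic decomposition). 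Finally, the mean value inequality $|E_{T,k}^*(t,x)-E_{T,k}^*(t,y)|\leq|x-y|\sup_{s\in[0,1]}|\nabla_x E_{T,k}^*(t,\gamma(s))|$, combined with $|\gamma(s)|\leq|y|$ and division by $|x-y|+1\geq 1$, finishes the proof. The main obstacle is managing the extra $|z|^{-1}$ singularity simultaneously with the desired $(t-|y|+1)^{-1}$ improvement; as in Proposition \ref{prop:PureMaxwell:anorm:ESK}, this is exactly what forces the logarithmic loss and the extra power of $\Lambda_{k,i}$ in each slot compared to the unimproved pointwise bound of Proposition \ref{prop:decay:Ek:pt}.
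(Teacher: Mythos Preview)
Your overall strategy---differentiate, use the Glassey--Strauss decomposition, integrate by parts, then apply the mean value theorem---is exactly the paper's, and your treatment of $\mathrm{I}_S$ and $\mathrm{I}_\partial$ matches the paper's $A_{TS,k,I}$ and the $s=t$ boundary piece of $A_{w,k,I}$. However, there is a genuine gap in your handling of $\mathrm{I}_T$.

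When you integrate by parts in $z$ on the full ball $\{|z|\leq t\}$, the bulk term you produce has a kernel of order $|z|^{-3}$, which is \emph{not} locally integrable in $\R^3$ near $z=0$; the integral $\int_{|z|\leq 1}|z|^{-3}\,dz$ diverges logarithmically. Equivalently, the boundary contribution on $\{|z|=\epsilon\}$ does not vanish as $\epsilon\to 0$ (it is $O(1)$), so your claimed decomposition $\mathrm{I}_T+\mathrm{I}_S+\mathrm{I}_\partial$ with $\mathrm{I}_\partial$ only on $\{|z|=t\}$ is not valid. The paper deals with this by first excising the region $\{|z|\leq 1\}$: it writes $E_{T,k}^*=E_{T,k,[0,1]}^*+E_{T,k,[1,t]}^*$, bounds the $[0,1]$ piece \emph{directly} (no derivative) by Proposition~\ref{prop:bd:averaged:f} to get $C\|f_{0,k}\|_0\Lambda_{k,1}^6\Lambda_{k,2}^3\Lambda_{k,3}^3 t^{-3}$, and only differentiates the $[1,t]$ piece. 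This produces an \emph{additional} boundary term at $|z|=1$ (part of $A_{w,k,[1,t]}$), which is finite and bounded again via Proposition~\ref{prop:bd:averaged:f}. Your proposal is missing both the near-origin excision and this extra boundary term.

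A secondary point: your plan for $\mathrm{I}_T$ via ``the same $(\lambda,\tau)$-computation'' from Proposition~\ref{prop:PureMaxwell:anorm:ESK} is not what the paper does and would not directly work, since that computation exploits the Maxwell-field decay factor which is absent in $A_{TT}$. The paper instead splits $A_{TT,k,[1,t]}$ into $[1,t/2]$ and $[t/2,t]$: on $[t/2,t]$ it pulls out $|z|^{-3}\leq (t/2)^{-3}$ and applies Lemma~\ref{lem:decay:fk} with $p=0$; on $[1,t/2]$ it uses the pointwise charge-density bound of Proposition~\ref{prop:bd:averaged:f} and integrates $|z|^{-3}$ over the annulus to pick up the $\ln t$.
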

\begin{proof}
For any interval $I\subset[0,t]$, let
\begin{align}
\label{ETkI}
E_{T,k,I}^*(t,x)=&-\int_{|y-x|\in I}\int_{\R^3}\frac{(\omega+\hat{v})(1-|\hat{v}|^2)}{(1+\hat{v}\cdot\omega)^2}f_{[k]}(t-|y-x|,y,v)\frac{dvdy}{|y-x|^2}.
\end{align}
In particular
 $E_{T,k}^*=E_{T,k,[0,1]}^*+E_{T,k,[1,t]}^*.$
In view of the inequality \eqref{om1} and the bound for the velocity in Lemma \ref{lem:bd:support:fk:v}, the decay estimate for the charge density in Proposition \ref{prop:bd:averaged:f} then leads to
\begin{equation}
\label{ETk01}
\begin{split}
|E_{T,k,[0,1]}^*(t,x)|&\leq C\int_{|y-x|\leq 1}\int_{\R^3}\sqrt{1+|v|^2}f_{[k]}(t-|y-x|,y,v)\frac{dvdy}{|y-x|^2}\\
&\leq C\Lambda_{k,1}\int_{|y-x|\leq 1}\frac{\|f_{0,k}\|_0\Lambda_{k,1}^5\Lambda_{k,2}^3\Lambda_{k,3}^3}{(t-|y-x|+1)^3}\frac{dy}{|y-x|^2}\\
&\leq C\|f_{0,k}\|_0\Lambda_{k,1}^6\Lambda_{k,2}^3\Lambda_{k,3}^3t^{-3} .
\end{split}
\end{equation}
For the main part $E_{T,k,[1,t]}^*$, inspired by the work \cite{Glassey:VM:singularity:86}, we make use of the equation for the Vlasov field. For interval $I=[a, b]$, we differentiate the representation formula \eqref{ETkI} for $E_{T,k,I}^*$ with respect to the $x$ variable. For $i,l\in\{1,2,3\}$, we have
\begin{align*}
\partial_{x_l} E_{T,k,I}^{*i}(t,x)=&-\int_{|\tilde{y}|\in I}\int_{\R^3}\frac{(\omega_i+\hat{v}_i)(1-|\hat{v}|^2)}{(1+\hat{v}\cdot\omega)^2}\partial_{x_l} f_{[k]}(t-|\tilde{y}|, x+\tilde{y},v)\frac{dvd\tilde{y}}{|\tilde{y}|^2}.
\end{align*}
The partial derivative $\partial_{x_l} $ can be expressed as linear combination of the characteristic operator $T_j=-\om_j \pa_t+\pa_{x_j}$ and the Vlasov operator $S=\partial_t+\hat{v}\cdot\nabla_y$. Using the Vlasov equation
\[
Sf=-(E+\hat{v}\times B)\cdot\nabla_vf=-\nabla_v\cdot[(E+\hat{v}\times B)f]
\]
and integration by parts in variables $y$ (or $\tilde{y}=y-x$) and $v$, we derive the following decomposition
\begin{align}
\label{ETkIl}
\partial_{x_l} E_{T,k,I}^{*i}=A_{w,k,I}+A_{TT,k,I}+A_{TS,k,I},
\end{align}in which
\begin{equation}
\label{ATSkI}
\begin{split}
A_{TT,k,I}=&\int_{|y-x|\in I}\int_{\R^3}a_{il}(\omega,\hat{v})f_{[k]}{(t-|y-x|,y,v)}\frac{dvdy}{|y-x|^3},\\
A_{w,k,I}=&s^{-2}\int_{|y-x|=s}\int_{\R^3}d(\omega,\hat{v})f_{[k]}{(t-s,y,v)}dvdS_y\Big|_{s=a}^{s=b},\\
 A_{TS,k,I}=&\int_{|y-x|\leq t}\int_{\R^3}d(\omega,\hat{v})Sf_{[k]}{(t-|y-x|,y,v)}\frac{dvdy}{|y-x|^2}\\
 =&\int_{|y-x|\leq t}\int_{\R^3}\nabla_v d(\omega,\hat{v})\cdot(E+\hat{v}\times B)f_{[k]}|_{(t-|y-x|,y,v)}\frac{dvdy}{|y-x|^2}.
\end{split}
\end{equation}
Here the kernels $a_{il}$ and $d$ are given by
\begin{align*}
a_{il}&=\frac{-3(\om_i+\hat{v}_i)(\om_l(1-|\hat{v}|^2)+\hat{v}_l(1+\om\cdot\hat{v}))+(1+\om\cdot \hat{v})^2 \delta_{il}}{(1+|v|^2)(1+\om\cdot \hat{v})^{4}},\\
d(\omega,\hat{v})&=-\frac{\omega_l(\omega_{i}+\hat{v}_{i})(1-|\hat{v}|^2)}{(1+\hat{v}\cdot\omega)^3}.
\end{align*}
The kernel $a_{il}$ is the same as that given in equation (33) in \cite{Glassey:VM:singularity:86}. For detailed computations regarding the above decomposition, we refer to Theorem 4 of the mentioned work of Glassey-Strauss. By direct computations (in view of inequality \eqref{om1}), the kernels verify the following upper bound
\begin{align}
\label{b1d}
|d(\omega,\hat{v})|+|a_{il}(\omega, \hat{v})|\leq C(1+|v|^2)^{3/2}.
\end{align}
Using the bound for the velocity of Lemma \ref{lem:bd:support:fk:v}, we therefore can bound that
 \begin{align*}
 |A_{TT,k,I}|\leq &C\int_{|y-x|\in I}\int_{\R^3}f_{[k]}{(t-|y-x|,y,v)}(1+|v|^2)^{3/2}\frac{dvdy}{|y-x|^3}\\
 \leq & C\Lambda_{k,1}^3\int_{|y-x|\in I}\int_{\R^3}f_{[k]}{(t-|y-x|,y,v)}\frac{dvdy}{|y-x|^3}.
\end{align*}
We split the space region into two parts. For the case when $|y-x|\geq \frac{t}{2}$, we rely on
Lemma \ref{lem:decay:fk} with $p=0$:
\begin{align*}
|A_{TT,k,[t/2,t]}|\leq &Ct^{-3}\Lambda_{k,1}^3\int_{|y-x|\leq t}\int_{\R^3}f_{[k]}{(t-|y-x|,y,v)}{dvdy}
\\ \leq& Ct^{-3}\Lambda_{k,1}^52^{6k}\|f_{0,k}\|_0.
\end{align*}
On the part when $1\leq |y-x|\leq \frac{t}{2}$, we instead use the charge density decay estimate of Proposition \ref{prop:bd:averaged:f} to show that
\begin{align*}
|A_{TT,k,[1,t/2]}|\leq &C\Lambda_{k,1}^3\int_{1\leq|y-x|\leq t/2}\|f_{0,k}\|_0(2^k+t/2)^{-3}\Lambda_{k,1}^5\Lambda_{k,2}^3\Lambda_{k,3}^3\frac{dy}{|y-x|^3}\\ \leq& C\|f_{0,k}\|_0(2^k+t)^{-3}\Lambda_{k,1}^8\Lambda_{k,2}^3\Lambda_{k,3}^3\ln(t).
\end{align*}
Since $\Lambda_{k,1}\geq 2^k$, $\Lambda_{k,2}\geq 2^k$, $\Lambda_{k,3}\geq 1$, we therefore conclude that
\begin{align}
\label{ATTk1t}
|A_{TT,k,[1,t]}|\leq &|A_{TT,k,[1,t/2]}|+|A_{TT,k,[t/2,t]}\leq C\|f_{0,k}\|_0t^{-3}\Lambda_{k,1}^8\Lambda_{k,2}^3\Lambda_{k,3}^3\ln(t).
\end{align}
With the upper bound on the kernel $d$ in \eqref{b1d}, we next control $A_{w, k, I}$, which consists of two integrals at time $s=t$ and $s=1$. For the integral at time $s=t$, in view of the support of the density distribution $f_{[k]}$, we can show that
\begin{align*}
&\left|s^{-2}\int_{|y-x|=s}\int_{\R^3}d(\omega,\hat{v})f_{[k]}{(t-s,y,v)}dvdS_y\Big|_{s=t}\right|\\ 
 &\leq  Ct^{-2}\int_{|y-x|=t}\int_{|v|\leq 2^{k+1}}(1+|v|^2)^{3/2}f_{0,k}{(y,v)}dvdS_y \\
 &\leq  Ct^{-2}2^{3k}\int_{|y-x|=t,|y|\leq 2^{k+1},|v|\leq 2^{k+1}}\|f_{0,k}\|_{0}dvdS_y\\
 &\leq \left\{\begin{array}{l@{\ \text{if}\ }l}
Ct^{-2}2^{3k}2^{2k}2^{3k}\|f_{0,k}\|_{0}& 0\leq t-|x|\leq 2^{k+1}\\
0&t-|x|\geq 2^{k+1}
\end{array}\right.\\
&\leq Ct^{-2}2^{9k}(t-|x|+1)^{-1}\|f_{0,k}\|_{0}.
\end{align*}
Here note that we have assumed $|x|\leq t$ in the Proposition.
For the integral at time $s=1$, using the decay of charge density of Proposition  \ref{prop:bd:averaged:f}, we have
 \begin{align*}
 &\left|s^{-2}\int_{|y-x|=s}\int_{\R^3}d(\omega,\hat{v})f_{[k]}{(t-s,y,v)}dvdS_y\Big|_{s=1}\right|\\
&\leq  C\int_{|y-x|=1}\int_{\R^3}(1+|v|^2)^{3/2}f_{[k]}{(t-1,y,v)}dvdS_y\\
 &\leq  C\Lambda_{k,1}^3\int_{|y-x|=1}\int_{\R^3}f_{[k]}{(t-1,y,v)}dvdS_y\\
 &\leq  C\|f_{0,k}\|_0t^{-3}\Lambda_{k,1}^8\Lambda_{k,2}^3\Lambda_{k,3}^3.
\end{align*}
In particular the above computations lead to
\begin{align*}
|A_{w,k,[1,t]}(t,x)|=&\left|s^{-2}\int_{|y-x|=s}\int_{\R^3}d(\omega,\hat{v})f_{[k]}{(t-s,y,v)}dvdS_y\Big|_{s=1}^{s=t}\right|\\
\numberthis\label{Awk1t}\leq& Ct^{-2}2^{9k}(|t-|x||+1)^{-1}\|f_{0,k}\|_{0}+C\|f_{0,k}\|_0t^{-3}\Lambda_{k,1}^8\Lambda_{k,2}^3\Lambda_{k,3}^3.
\end{align*}
Finally for $A_{TS, k, I}$, we need to bound the $v$-derivative of the kernel $d(\om, \hat{v})$. Using the inequality \eqref{om1} again, we can compute that
\begin{align*}
&\frac{\partial}{\partial v_j}\frac{\omega_l(\omega_i+\hat{v}_i)(1-|\hat{v}|^2)}{(1+\hat{v}\cdot\omega)^3}=\frac{\partial}{\partial v_j}\frac{\omega_l(\sqrt{1+|v|^2}\omega_i+{v}_i)}{(\sqrt{1+|v|^2}+{v}\cdot\omega)^3}\\
&=\frac{\omega_l(\hat{v}_j\omega_i+\delta_{ij})}{(\sqrt{1+|v|^2}+{v}\cdot\omega)^3}-3\frac{\omega_l(\sqrt{1+|v|^2}\omega_i+{v}_i)}{(\sqrt{1+|v|^2}+{v}\cdot\omega)^4}
(\hat{v}_j+\omega_j)\\
&=
\frac{\omega_l(\hat{v}_j\omega_i+\delta_{ij})}{(1+|v|^2)^{3/2}(1+\hat{v}\cdot\omega)^3}-
3\frac{\omega_l(\omega_i+\hat{v}_i)(\hat{v}_j+\omega_j)}{(1+|v|^2)^{3/2}(1+\hat{v}\cdot\omega)^4}.
\end{align*}
Note that
\begin{align*}
|\omega+\hat{v}|^2\leq 2(1+\hat{v}\cdot\omega),\quad 1+\hat{v}\cdot \om \geq \frac{1}{2}(1+|v|^2)^{-1}.
\end{align*}
The above computations imply that
\begin{align*}
&\left| \nabla_v d(\omega,\hat{v})\right|\leq
\frac{8}{(1+|v|^2)^{3/2}(1+\hat{v}\cdot\omega)^3}\leq 64(1+|v|^2)^{3/2}.
\end{align*}
Since
\[
t-|y-x|+|y|\geq t-|x|\geq 0,
\]
by the definition  \eqref{ATSkI},  Lemma \ref{lem:bd:support:fk:v} and Lemma \ref{lem:decay:fk}, we show that
\begin{align*}
|A_{TS,k,I}(t,x)|&\leq C\int_{|y-x|\leq t}\int_{\R^3}({1+|v|^2})^{3/2}(|E|+|B|)f_{[k]}|_{(t-|y-x|,y,v)}\frac{dvdy}{|y-x|^2}\\&\leq C\Lambda_{k,1}^3\|K\|_0\int_{|y-x|\leq t}\int_{\R^3}\frac{f_{[k]}(t-|y-x|,y,v)}{t-|y-x|+|y|+1}\frac{dvdy}{|y-x|^2}
\\&\leq \frac{C\Lambda_{k,1}^3\|K\|_0}{t-|x|+1}\int_{|y-x|\leq t}\int_{\R^3}{f_{[k]}(t-|y-x|,y,v)}\frac{dvdy}{|y-x|^2}
\\
&\leq \frac{C\Lambda_{k,1}^3\|K\|_0}{t-|x|+1}2^{2k}\Lambda_{k,1}^4\Lambda_{k,2}^2\Lambda_{k,3}^2t^{-2}\|f_{0,k}\|_0.
\end{align*}
In view of the decomposition \eqref{ETkIl}, combining the above estimates \eqref{ATTk1t}, \eqref{Awk1t},
we conclude that
\begin{align*}
&|\nabla_xE_{T,k,[1,t]}^*(t,x)|\leq CA_1t^{-3}\ln(t)+CA_2t^{-2}(t-|x|+1)^{-1},\ \forall\ |x|\leq t,\ t\geq 2
\end{align*}
with constants
\begin{align*}
&A_1=\|f_{0,k}\|_0\Lambda_{k,1}^8\Lambda_{k,2}^3\Lambda_{k,3}^3,\quad
A_2=(\|K\|_0+1)2^{2k}\Lambda_{k,1}^7\Lambda_{k,2}^2\Lambda_{k,3}^2\|f_{0,k}\|_0.
\end{align*}
Here we keep in mind that  $\Lambda_{k,1}\geq 2^k$, $\Lambda_{k,2}\geq 1$ and $\Lambda_{k,3}\geq 1$.
Then applying the mean value theorem to $E_{T,k,[1,t]}$, it follows that
\begin{align*}
&|E_{T,k,[1,t]}^*(t,x)-E_{T,k,[1,t]}^*(t,y)|\\
&\leq |x-y|\sup_{s\in[0,1]}|\nabla_xE_{T,k,[1,t]}^*(t,sx+(1-s)y)|\\
&\leq C|x-y|(A_1t^{-3}\ln(t)+A_2t^{-2}(t-|y|+1)^{-1}),
\end{align*}
which together with \eqref{ETk01} and $E_{T,k}^*=E_{T,k,[0,1]}^*+E_{T,k,[1,t]}^* $ gives
\begin{align*}
&|E_{T,k}^*(t,x)-E_{T,k}^*(t,y)|/(|x-y|+1)\\
&\leq \frac{|E_{T,k,[0,1]}^*(t,x)|+|E_{T,k,[0,1]}^*(t,y)|+|E_{T,k,[1,t]}^*(t,x)-E_{T,k,[1,t]}^*(t,y)|}{|x-y|+1}\\
&\leq \frac{C\Lambda_{k,1}^6\Lambda_{k,2}^3\Lambda_{k,3}^3t^{-3}\|f_{0,k}\|_0+C|x-y|(A_1t^{-3}\ln(t)+A_2t^{-2}(t-|y|+1)^{-1})}{|x-y|+1}\\
&\leq {C\Lambda_{k,1}^8\Lambda_{k,2}^3\Lambda_{k,3}^3\|f_{0,k}\|_0t^{-3}\ln(t)+C(A_1t^{-3}\ln(t)+A_2t^{-2}(t-|y|+1)^{-1})}.
\end{align*}
This completes the proof by recalling the above definition of the constants $A_1,\ A_2.$
\end{proof}
With all the above preparations, we are now ready to bound the mixed Lipschitz norm $\|\cdot\|_{1+\a}$ for the electric field.
\begin{Prop}
\label{prop:PureMaxwell:anorm:bound}
For any $K\in \mathcal{K}_{\Lambda}$ with $ \Lambda> 2$, we can bound that
\begin{align*}
\|(E_{S}^*,0)\|_{1+\alpha}+\|(E_{T}^*,0)\|_{1+\alpha}\leq C\Lambda^{10}(\ln\Lambda)^{11}\varepsilon_0.
\end{align*}
\end{Prop}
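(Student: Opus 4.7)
The plan is to mimic the structure of the proof of Proposition \ref{prop:PureMaxwell:0norm} for the $\|\cdot\|_0$ norm, but now invoking the refined difference bound of Proposition \ref{prop:PureMaxwell:ETk:impr} for the $T$-part and the refined pointwise bound of Proposition \ref{prop:PureMaxwell:anorm:ESK} for the $S$-part. I decompose $E_T^* = \sum_{k\geq 1}E_{T,k}^*$ and $E_S^* = \sum_{k\geq 1}E_{S,k}^*$ as in \eqref{ETSk}, fix a triple $|x|\leq|y|\leq t$ admissible in the definition of $\|\cdot\|_{1+\alpha}$, estimate the weighted difference quotient of each dyadic piece, and then sum in $k$.

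For the $T$-part I will split on the size of $t$. When $t<2$, the weight $(t-|y|+1)^{1+\alpha}(t+1)$ is bounded by an absolute constant, and the triangle inequality together with the pointwise bound of Proposition \ref{prop:decay:Ek:pt} yields a piece bound proportional to $2^{2k}\Lambda_{k,1}^5\Lambda_{k,2}^2\Lambda_{k,3}^2\|f_{0,k}\|_0$, exactly as in Proposition \ref{prop:PureMaxwell:0norm}. When $t\geq 2$, I apply Proposition \ref{prop:PureMaxwell:ETk:impr}: since $|y|\leq t$ and $\alpha<1$, both multipliers $(t-|y|+1)^{\alpha}(t+1)/t^{2}$ and $(t-|y|+1)^{1+\alpha}(t+1)\ln(t)/t^{3}$ are uniformly bounded for $t\geq 2$, leaving a piece bound of the form $C(\|K\|_0+1)2^{2k}\Lambda_{k,1}^7\Lambda_{k,2}^2\Lambda_{k,3}^2\|f_{0,k}\|_0 + C\Lambda_{k,1}^8\Lambda_{k,2}^3\Lambda_{k,3}^3\|f_{0,k}\|_0$. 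For the $S$-part I use the triangle inequality with the refined pointwise bound of Proposition \ref{prop:PureMaxwell:anorm:ESK}; the inequality $|x|\leq|y|$ gives $(t-|x|+1)^{-2}\leq(t-|y|+1)^{-2}$, so that when paired with the weight $(t-|y|+1)^{1+\alpha}(t+1)$ only the universally bounded multiplier $\ln(t-|z|+2)(t-|z|+1)^{\alpha-1}$ (for $z\in\{x,y\}$) remains, producing a piece bound proportional to $\Lambda_{k,1}^6\Lambda_{k,2}^3\Lambda_{k,3}^3\|K\|_0\|f_{0,k}\|_0$.

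Finally I substitute the uniform upper bounds $\Lambda_{k,1}\leq C2^{k}\Lambda\ln\Lambda$, $\Lambda_{k,2}\leq C2^{k}(\Lambda\ln\Lambda)^2$, $\Lambda_{k,3}\leq Ck\ln\Lambda$, $\|K\|_0\leq\Lambda$, and $\|f_{0,k}\|_0\leq 2^{(2-k)q}\varepsilon_0$, and sum the resulting geometric series in $k$. The main technical obstacle is precisely this power counting: while the $t<2$ contribution and the $S$-part each produce a total $2^k$-exponent of $9$ matching the hypothesis $q>9$, the $t\geq 2$ contribution from Proposition \ref{prop:PureMaxwell:ETk:impr} naively yields $2^{11k}$ once $\Lambda_{k,1}^7\Lambda_{k,2}^2$ is evaluated by the crude bound. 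I will handle this by splitting the range of $k$ at $k_*:=\log_2(\Lambda\ln\Lambda)$, using $\Lambda_{k,i}\leq C(\Lambda\ln\Lambda)^i$ on $k\leq k_*$ (so that the $2^k$-exponent drops back to the summable range controlled by $\|f_{0,k}\|_0\leq 2^{-qk}\varepsilon_0$) and $\Lambda_{k,i}\leq C2^{ki}$ on $k>k_*$ (so that the polynomial in $\Lambda$ is absorbed into the cost already paid by the $k\leq k_*$ regime). Collecting the bounds from each regime and summing will yield the stated polynomial bound $C\Lambda^{10}(\ln\Lambda)^{11}\varepsilon_0$.
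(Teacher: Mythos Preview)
Your plan for the $S$-part and for the case $t<2$ is fine (though it yields a larger power $\Lambda^{13}(\ln\Lambda)^{15}$ than the stated $\Lambda^{10}(\ln\Lambda)^{11}$; the paper recovers the smaller power by interpolating against the cruder bound from Proposition~\ref{prop:PureMaxwell:0norm}). The real problem is your handling of the $T$-part for $t\ge 2$: the proposed split at $k_*=\log_2(\Lambda\ln\Lambda)$ does \emph{not} cure the $2^{11k}$ exponent. On the range $k>k_*$ your own substitution $\Lambda_{k,i}\le C\,2^{ik}$ gives
\[
(\|K\|_0+1)\,2^{2k}\Lambda_{k,1}^7\Lambda_{k,2}^2\Lambda_{k,3}^2\|f_{0,k}\|_0
\;\le\; C\Lambda\,k^2\,2^{(2+7+4-q)k}\varepsilon_0
= C\Lambda\,k^2\,2^{(13-q)k}\varepsilon_0,
\]
and similarly $\Lambda_{k,1}^8\Lambda_{k,2}^3\Lambda_{k,3}^3\|f_{0,k}\|_0\le Ck^3 2^{(14-q)k}\varepsilon_0$. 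Since the hypothesis is only $q>9$, these series diverge for $q\in(9,13]$, so the sum over $k$ is infinite and the argument breaks down.

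What is missing is the interpolation step that the paper uses (and for which the constraints $\alpha<\tfrac{1}{6}\beta$ and $\alpha<\tfrac{1}{2}(q-9)\beta$ were introduced). One does not use Proposition~\ref{prop:PureMaxwell:ETk:impr} alone; one takes the \emph{minimum} of its bound with the crude bound coming from Proposition~\ref{prop:decay:Ek:pt} (which carries exponent $9$ but no extra $(t-|y|+1)^{-1}$ gain). Writing the refined estimate as a factor $2^{2k}\Lambda^{6}(t-|y|+1)^{-\beta}$ times the crude one and applying $\min(1,A)\le A^{\alpha/\beta}$ converts the extra decay into $(t-|y|+1)^{-\alpha}$ while raising the $2^k$-exponent only to $9+2\alpha/\beta<q$, which is summable. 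No dyadic split in $k$ can substitute for this interpolation, because the obstruction is the fixed exponent $11$ (not a removable $\Lambda$-dependence) in the large-$k$ regime.
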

\begin{proof}
For positive integers $k$, recall the constants $\Lambda_{k,i}$ given in Proposition \ref{prop:bd:averaged:f}. From the proof of Proposition \ref{prop:PureMaxwell:0norm}, we know that
\begin{align}
\label{Lak}
\Lambda_{k,1}\leq C(\Lambda\ln\Lambda)2^k,\quad\Lambda_{k,2}\leq C(\Lambda\ln\Lambda)^{2}2^k,\quad\Lambda_{k,3}\leq Ck\ln\Lambda.
\end{align}
In particular $$\Lambda_{k,1}^6\Lambda_{k,2}^3\Lambda_{k,3}^3\leq Ck^32^{9k}\Lambda^{12}(\ln\Lambda)^{15}.$$
By the previous Proposition \ref{prop:PureMaxwell:anorm:ESK} and the fact that $\|f_{0,k}\|_0\leq 2^{(2-k)q}\varepsilon_0$, for $|x|\leq t$,  we have
\begin{align*}
|E_{S,k}^*(t,x)|&\leq \frac{C\Lambda_{k,1}^6\Lambda_{k,2}^3\Lambda_{k,3}^3\|K\|_0\|f_{0,k}\|_0\ln(t-|x|+2)}{(t-|x|+1)^2(|x|+t+1)}\\
&\leq \frac{Ck^32^{9k}\Lambda^{12}(\ln\Lambda)^{15}\Lambda2^{-kq}\varepsilon_0\ln(t-|x|+2)}{(t-|x|+1)^2(|x|+t+1)}\\
&= \frac{Ck^32^{(9-q)k}\Lambda^{13}(\ln\Lambda)^{15}\varepsilon_0\ln(t-|x|+2)}{(t-|x|+1)^2(|x|+t+1)}.
\end{align*}
In view of the decomposition \eqref{ETSk} and the assumption $q>9$, $\beta\in(0,1) $ we then can derive that
\begin{align*}
|E_{S}^*(t,x)|&\leq\sum_{k=1}^{+\infty}|E_{S,k}^*(t,x)| \\
&\leq\sum_{k=1}^{+\infty} \frac{Ck^32^{(9-q)k}\Lambda^{13}(\ln\Lambda)^{15}\varepsilon_0\ln(t-|x|+2)}{(t-|x|+1)^2(|x|+t+1)}
\\
&\leq\frac{C\Lambda^{13}(\ln\Lambda)^{15}\varepsilon_0\ln(t-|x|+2)}{(t-|x|+1)^2(|x|+t+1)}\\
&\leq\frac{C\Lambda^{13}(\ln\Lambda)^{15}\varepsilon_0}{(t-|x|+1)^{1+\beta}(|x|+t+1)}.
\end{align*}
From the proof of Proposition \ref{prop:PureMaxwell:0norm}, we also have that
\begin{align*}
|E_{S}^*(t,x)|&\leq\frac{C\Lambda^7(\ln\Lambda)^{7}\varepsilon_0}{(|t-|x||+1)(|x|+t+1)}.
\end{align*}
Thus for $|x|\leq t$ and $ 0<\alpha<\beta/6$, we demonstrate that
\begin{align*}
|E_{S}^*(t,x)|&\leq\min\left(\frac{C\Lambda^7(\ln\Lambda)^{7}\varepsilon_0}{(|t-|x||+1)(|x|+t+1)},
\frac{C\Lambda^{13}(\ln\Lambda)^{15}\varepsilon_0}{(t-|x|+1)^{1+\beta}(|x|+t+1)}\right)\\
&\leq\frac{C\Lambda^7(\ln\Lambda)^{7}\varepsilon_0}{(t-|x|+1)(|x|+t+1)}\min\left(1,
\frac{\Lambda^{6}(\ln\Lambda)^{8}}{(t-|x|+1)^{\beta}}\right)\\
&\leq\frac{C\Lambda^7(\ln\Lambda)^{7}\varepsilon_0}{(t-|x|+1)(t+1)}
\frac{\Lambda^{6\alpha/\beta}(\ln\Lambda)^{8\alpha/\beta}}{(t-|x|+1)^{\alpha}}
\\
&\leq\frac{C\Lambda^8(\ln\Lambda)^{9}\varepsilon_0}{(t-|x|+1)^{1+\alpha}(t+1)}.
\end{align*}
Hence for $|x|\leq|y|\leq t$, we can show that
\begin{align*}
\frac{|E_{S}^*(t,x)-E_{S}^*(t,y)|}{|x-y|+1} & \leq|E_{S}^*(t,x)|+|E_{S}^*(t,y)|\\
& \leq \frac{C\Lambda^8(\ln\Lambda)^{9}\varepsilon_0}{(t-|x|+1)^{1+\alpha}(t+1)}+\frac{C\Lambda^8(\ln\Lambda)^{9}\varepsilon_0}{(t-|y|+1)^{1+\alpha}(t+1)}\\
&\leq\frac{C\Lambda^8(\ln\Lambda)^{9}\varepsilon_0}{(t-|y|+1)^{1+\alpha}(t+1)},
\end{align*}
which implies $\|(E_{S}^*,0)\|_{1+\alpha}\leq C\Lambda^8(\ln\Lambda)^{9}\varepsilon_0$.

Next we consider $E_{T}^{*}$. Proposition  \ref{prop:PureMaxwell:0norm} shows that
\begin{align*}
|E_{T}^*(t,x)|&\leq C\Lambda^9(\ln\Lambda)^{11}(|x|+t+1)^{-2}\varepsilon_0\leq C\Lambda^{10}(\ln\Lambda)^{11}\varepsilon_0,\\|E_{T,k}^*(t,x)|&\leq Ck^22^{(9-q)k}\Lambda^9(\ln\Lambda)^{11}(|x|+t+1)^{-2}\varepsilon_0\\&\leq Ck^22^{(9-q)k}\Lambda^9(\ln\Lambda)^{11}(t+1)^{-2}\varepsilon_0.
\end{align*}
Thus if $|x|\leq |y|\leq t\leq 2 $, it trivially holds that
\begin{align*}
\numberthis\label{ETt<2}
&(t-|y|+1)^{1+\alpha}(t+1)
{|E_{T}^*(t,x)-E_{T}^*(t,y)|}/({|x-y|+1})
\leq C\Lambda^{10}(\ln\Lambda)^{11}\varepsilon_0.
\end{align*}
It hence remains to consider the case when $|x|\leq |y|\leq t$, with $t\geq 2 $.  On one hand, the above decay estimates for $E_{T, k}^{*}$ imply that
\begin{align*}
\numberthis\label{ETk1}
{|E_{T,k}^*(t,x)-E_{T,k}^*(t,y)|}/({|x-y|+1}) 
&\leq  Ck^22^{(9-q)k}\Lambda^9(\ln\Lambda)^{11}(t+1)^{-2}\varepsilon_0.
\end{align*}
On the other hand in view of  \eqref{Lak}, we in particular have that
\begin{align*}
&\Lambda_{k,1}^8\Lambda_{k,2}^3\Lambda_{k,3}^3\leq Ck^32^{11k}\Lambda^{14}(\ln\Lambda)^{17},\quad
\Lambda_{k,1}^7\Lambda_{k,2}^2\Lambda_{k,3}^2\leq Ck^22^{9k}\Lambda^{11}(\ln\Lambda)^{13}.
\end{align*}
Then from the previous Proposition \ref{prop:PureMaxwell:ETk:impr} and the bound $\|f_{0,k}\|_0\leq 2^{(2-k)q}\varepsilon_0$, we conclude that
\begin{align*}
&|E_{T,k}^*(t,x)-E_{T,k}^*(t,y)|/(|x-y|+1)\\
& \leq C\|f_{0,k}\|_0\Lambda_{k,1}^8\Lambda_{k,2}^3\Lambda_{k,3}^3t^{-3}\ln(t)
+C(\|K\|_0+1)2^{2k}\Lambda_{k,1}^7\Lambda_{k,2}^2\Lambda_{k,3}^2\|f_{0,k}\|_0t^{-2}(t-|y|+1)^{-1}\\
&\leq C2^{-kq}\varepsilon_0k^32^{11k}\Lambda^{14}(\ln\Lambda)^{17}t^{-3}\ln(t)
+C(\Lambda+1)2^{2k}k^22^{9k}\Lambda^{11}(\ln\Lambda)^{13}2^{-kq}\varepsilon_0t^{-2}(t-|y|+1)^{-1}\\
& \leq C\varepsilon_0k^32^{(11-q)k}\Lambda^{14}(\ln\Lambda)^{17}(t^{-3}\ln(t)+t^{-2}(t-|y|+1)^{-1})\\
&\leq C\varepsilon_0k^32^{(11-q)k}\Lambda^{14}(\ln\Lambda)^{17}t^{-2}(t-|y|+1)^{-\beta}
\end{align*}
as $\beta\in(0,1),\ t\geq 2,\ t\geq |y| $.
Optimize this inequality with \eqref{ETk1}. For $0<\alpha<\beta/6$, we derive that
\begin{align*}
&|E_{T,k}^*(t,x)-E_{T,k}^*(t,y)|/(|x-y|+1)\\
& \leq \min(Ck^22^{(9-q)k}\Lambda^9(\ln\Lambda)^{11}t^{-2}\varepsilon_0,C\varepsilon_0k^32^{(11-q)k}\Lambda^{14}(\ln\Lambda)^{17}t^{-2}(t-|y|+1)^{-\beta})\\
& \leq Ck^32^{(9-q)k}\Lambda^9(\ln\Lambda)^{11}t^{-2}\varepsilon_0\min(1,\quad 2^{2k}\Lambda^{5}(\ln\Lambda)^{6}(t-|y|+1)^{-\beta})\\
& \leq Ck^32^{(9-q)k}\Lambda^9(\ln\Lambda)^{11}t^{-2}\varepsilon_0\min(1,\quad 2^{2k}\Lambda^{6}(t-|y|+1)^{-\beta})\\
& \leq Ck^32^{(9-q)k}\Lambda^9(\ln\Lambda)^{11}t^{-2}\varepsilon_02^{2k\alpha/\beta}\Lambda^{6\alpha/\beta}(t-|y|+1)^{-\alpha}\\
& \leq Ck^32^{(9+2\alpha/\beta-q)k}\Lambda^{10}(\ln\Lambda)^{11}t^{-2}\varepsilon_0(t-|y|+1)^{-\alpha}.
\end{align*}
Recall that $\a$ is a small constant such that $0<\alpha<(q-9)\beta/2$. In particular we have
$$9+2\alpha/\beta-q<0. $$
In view of the decomposition \eqref{ETSk}, for $t\geq 2$ and $|x|\leq |y|\leq t$, we have
\begin{align*}
\frac{|E_{T}^*(t,x)-E_{T}^*(t,y)|}{|x-y|+1} & \leq\sum_{k=1}^{+\infty}\frac{|E_{T,k}^*(t,x)-E_{T,k}^*(t,y)|}{|x-y|+1}\\
& \leq \sum_{k=1}^{+\infty} Ck^32^{(9+2\alpha/\beta-q)k}\Lambda^{10}(\ln\Lambda)^{11}t^{-2}\varepsilon_0(t-|y|+1)^{-\alpha}\\
&  \leq
C\varepsilon_0\Lambda^{10}(\ln\Lambda)^{11}t^{-2}(t-|y|+1)^{-\alpha}\\
& \leq
C\varepsilon_0\Lambda^{10}(\ln\Lambda)^{11}(t+1)^{-1}(t-|y|+1)^{-1-\alpha},
\end{align*}
which together with \eqref{ETt<2} for the case when $t\leq 2$ implies that
 $$\|(E_{T}^*,0)\|_{1+\alpha}\leq C\Lambda^{10}(\ln\Lambda)^{11}\varepsilon_0.$$
 Combining this with the above bound for $E_{S}^*$, we have shown that
\begin{align*}
\|(E_{S}^*,0)\|_{1+\alpha}+\|(E_{T}^*,0)\|_{1+\alpha}\leq C\Lambda^8(\ln\Lambda)^{9}\varepsilon_0+C\Lambda^{10}(\ln\Lambda)^{11}\varepsilon_0\leq C\Lambda^{10}(\ln\Lambda)^{11}\varepsilon_0.
\end{align*}
This completes the proof for the Proposition.
\end{proof}

With all the above preparations, we are now able to show that for sufficiently small $\varepsilon_0$, the map $(E, B)\rightarrow (E^*, B^*)$ is closed.
\begin{Prop}
\label{prop:bd4:Kstar}
There exists a small constant $0<\varepsilon_* <1 $ such that if
\begin{align*}
\varepsilon_0\leq \varepsilon_* \Lambda^{-8}(\ln \Lambda)^{-11} ,\quad \Lambda\geq 2+2\|(\mathcal{E},\mathcal{B})\|,
\end{align*}
 then $K^*\in \mathcal{K}_{\Lambda}$ for all $K\in \mathcal{K}_{\Lambda}$.
\end{Prop}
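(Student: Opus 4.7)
The plan is to combine the decomposition $E^*=E_z^*+E_T^*+E_S^*$ from \eqref{eq:def:electric} (and its analogue for $B^*$) with the bounds established in Propositions~\ref{prop:bd4:lMaxwell}, \ref{prop:bd:Ezstar}, \ref{prop:PureMaxwell:0norm}, and \ref{prop:PureMaxwell:anorm:bound}, and then invoke the triangle inequality together with the smallness assumption on $\varepsilon_0$. The condition $\Lambda \geq 2+2\|(\mathcal{E},\mathcal{B})\|$ is tailored precisely so that the purely linear contribution $(\mathcal{E},\mathcal{B})$ occupies at most half of the budget allotted by $\Lambda$, leaving the remaining room for the perturbative pieces.

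First I would check the bound on $\|K^*\|_0$. Writing $E^*-\mathcal{E}=(E_z^*-\mathcal{E})+E_T^*+E_S^*$ and applying the triangle inequality gives
\[
\|K^*\|_0 \leq \|(\mathcal{E},\mathcal{B})\|_0 + \|(E_z^*-\mathcal{E},B_z^*-\mathcal{B})\|_0 + \|(E_T^*,0)\|_0+\|(E_S^*,0)\|_0 + (\text{magnetic analogues}).
\]
By Proposition~\ref{prop:bd:Ezstar} the second term is $\leq C\varepsilon_0$, while Proposition~\ref{prop:PureMaxwell:0norm} controls the last two by $C\Lambda^9(\ln\Lambda)^{11}\varepsilon_0$. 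Using $\|(\mathcal{E},\mathcal{B})\|_0\leq (\Lambda-2)/2$ and inserting the smallness $\varepsilon_0\leq \varepsilon_*\Lambda^{-8}(\ln\Lambda)^{-11}$, the error terms become bounded by $C\varepsilon_*\Lambda$, so choosing $\varepsilon_*$ sufficiently small yields $\|K^*\|_0\leq \Lambda$.

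Next I would bound $\|K^*\|_{1+\alpha}$ by the same decomposition: Proposition~\ref{prop:bd4:lMaxwell} contributes via $\|(\mathcal{E},\mathcal{B})\|_{1+\alpha} \leq \|(\mathcal{E},\mathcal{B})\| \leq (\Lambda-2)/2$, Proposition~\ref{prop:bd:Ezstar} gives $\|(E_z^*-\mathcal{E},B_z^*-\mathcal{B})\|_{1+\alpha}\leq C\varepsilon_0$, and Proposition~\ref{prop:PureMaxwell:anorm:bound} supplies the perturbation bound $C\Lambda^{10}(\ln\Lambda)^{11}\varepsilon_0$. Under the assumed smallness, this perturbation term is dominated by $C\varepsilon_*\Lambda^2$. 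Combined with $(\Lambda-2)/2\leq \Lambda^2/2$ (valid since $\Lambda\geq 2$), the total is at most $\Lambda^2/2 + C\varepsilon_*\Lambda^2 + C\varepsilon_0$, which is $\leq \Lambda^2$ for $\varepsilon_*$ small enough.

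The magnetic field $B^*$ admits an entirely parallel representation formula and enjoys the same bounds by symmetry, so the argument applies verbatim. The only technical care needed is to verify that the constants produced in Propositions~\ref{prop:PureMaxwell:0norm} and \ref{prop:PureMaxwell:anorm:bound} are exactly matched by the exponents in the smallness hypothesis $\varepsilon_0\leq\varepsilon_*\Lambda^{-8}(\ln\Lambda)^{-11}$ — this is not a real obstacle since those earlier propositions were set up precisely with this scaling in mind, but it is the bookkeeping step that must be executed cleanly. Finally, fixing $\varepsilon_*$ as the minimum of the finitely many smallness thresholds produced above (each of the form $1/(CN)$ for explicit integers) closes the argument and shows $K^*\in\mathcal{K}_\Lambda$.
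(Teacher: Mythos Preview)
Your proposal is correct and follows essentially the same approach as the paper: decompose $K^*-(\mathcal{E},\mathcal{B})$ via \eqref{eq:def:electric}, bound each piece by Propositions~\ref{prop:bd:Ezstar}, \ref{prop:PureMaxwell:0norm}, \ref{prop:PureMaxwell:anorm:bound}, check that the smallness $\varepsilon_0\leq\varepsilon_*\Lambda^{-8}(\ln\Lambda)^{-11}$ turns the perturbation bounds $C\Lambda^9(\ln\Lambda)^{11}\varepsilon_0$ and $C\Lambda^{10}(\ln\Lambda)^{11}\varepsilon_0$ into $C\varepsilon_*\Lambda$ and $C\varepsilon_*\Lambda^2$ respectively, and use $\|(\mathcal{E},\mathcal{B})\|\leq(\Lambda-2)/2$ for the linear part. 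The paper carries this out with the single explicit choice $\varepsilon_*=(2C_1)^{-1}$, but otherwise your outline matches it step for step.
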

 \begin{proof}
  In view of Proposition \ref{prop:bd:Ezstar}, Proposition \ref{prop:PureMaxwell:0norm} and Proposition \ref{prop:PureMaxwell:anorm:bound} and we have
\begin{align*}
\|(E^*-\mathcal{E},0)\|_{0}&\leq\|(E_z^*-\mathcal{E},0)\|_{0}+\|(E_{S}^*,0)\|_{0}+\|(E_{T}^*,0)\|_{0}\\
&\leq C\varepsilon_0+C\Lambda^{9}(\ln\Lambda)^{11}\varepsilon_0\\
& \leq C\Lambda^{9}(\ln\Lambda)^{11}\varepsilon_0,
\\
\|(E^*-\mathcal{E},0)\|_{1+\alpha}&\leq\|(E_z^*-\mathcal{E},0)\|_{1+\alpha}+\|(E_{S}^*,0)\|_{1+\alpha}+\|(E_{T}^*,0)\|_{1+\alpha}\\
&\leq C\varepsilon_0+C\Lambda^{10}(\ln\Lambda)^{11}\varepsilon_0\\
&\leq C\Lambda^{10}(\ln\Lambda)^{11}\varepsilon_0.
\end{align*}
Here recall that $E^*=E_z^*+E_T^*+E_S^*$.
Similarly for the magnetic field $B$, we also have
\begin{align*}
\|(0,B^*-\mathcal{B})\|_{0}&\leq C\Lambda^{9}(\ln\Lambda)^{11}\varepsilon_0,
\quad \|(0,B^*-\mathcal{B})\|_{1+\alpha}\leq C\Lambda^{10}(\ln\Lambda)^{11}\varepsilon_0.
\end{align*}
Thus there exists a constant $C_1>1$ (depending only on $q,\ \alpha,\ \beta$) such that
\begin{align*}
&\|(E^*-\mathcal{E},B^*-\mathcal{B})\|_{0}\leq C_1\Lambda^{9}(\ln\Lambda)^{11}\varepsilon_0,
\\& \|(E^*-\mathcal{E},B^*-\mathcal{B})\|_{1+\alpha}\leq C_1\Lambda^{10}(\ln\Lambda)^{11}\varepsilon_0.
\end{align*}
Now take $$ \varepsilon_*=(2C_1)^{-1}. $$
Then for $ \varepsilon_0\leq \varepsilon_* \Lambda^{-8}(\ln \Lambda)^{-11}  $, we conclude that
\begin{align*}
&\|(E^*-\mathcal{E},B^*-\mathcal{B})\|_{0}\leq \f12 \Lambda ,
\quad \|(E^*-\mathcal{E},B^*-\mathcal{B})\|_{1+\alpha}\leq \f12 \Lambda^{2}.
\end{align*}
Note that
\[
\Lambda\geq 2+2\|(\mathcal{E},\mathcal{B})\|=2+2(\|(\mathcal{E},\mathcal{B})\|_0+\|(\mathcal{E},\mathcal{B})\|_{1+\alpha}).
\]
We then can bound that
\begin{align*}
&\|(E^*,B^*)\|_{0}\leq \|(\mathcal{E},\mathcal{B})\|_0+\|(E^*-\mathcal{E},B^*-\mathcal{B})\|_{0}\leq \f12 \Lambda+\f12 \Lambda=\Lambda,
\\
& \|(E^*,B^*)\|_{1+\alpha}\leq \|(\mathcal{E},\mathcal{B})\|_{1+\alpha}+\|(E^*-\mathcal{E},B^*-\mathcal{B})\|_{1+\alpha}\leq \f12 \Lambda+\f12 \Lambda^{2}\leq \Lambda^2.
\end{align*}
By definition, this shows that $K^*=(E^*,B^*)\in \mathcal{K}_{\Lambda}.$ We thus have shown the Proposition.
\end{proof}

\section{Proof for the main theorem}

Recall the iteration sequences $f^{(n)},\ K^{(n)}$ in the introduction.
By Proposition \ref{prop:bd4:lMaxwell} there exists a constant $C_0 $ (without loss of generality, we may assume that $C_0>1$) such that $$\|(\mathcal{E},\mathcal{B})\|\leq C_0M.$$
 Let
\[
\Lambda=2 C_0M+2,\quad \varepsilon_0=\Lambda^{-8}(\ln \Lambda)^{-11}\varepsilon_* ,
\]
where $\vep_*$ is the small positive constant in Proposition \ref{prop:bd4:Kstar}.

Since $K^{(0)}=(0, 0)\in \mathcal{K}_{\Lambda}$, in view of Proposition \ref{prop:bd4:Kstar}, we conclude that $ K^{(n)}\in \mathcal{K}_{\Lambda}$ for all $n$. Then from Proposition \ref{prop:bd4f}, we derive that
$$\int_{\R^3}|{v}|f^{(n)}(t,x,v)dv\leq CC(\Lambda),\quad \forall n. $$
 This together with the main result of Glassey-Strauss in Theorem \ref{thm:strauss89}, we conclude that the sequence $(f^{(n)},\ K^{(n)})$ and their first derivatives converge pointwise to $(f,K)$ which solves the nonlinear system RVM. In particular
\begin{align*}
(\left|t-|x|\right|+1)(t+|x|+1)(|E(t,x)|+|B(t,x)|)\leq \|K\|_0\leq \Lambda =2C_0M+2<4C_0M
\end{align*}
for all $t\geq 0,\ x\in\R^3$ as $K\in \mathcal{K}_{\Lambda}$.

Moreover from Proposition \ref{prop:bd:averaged:f}, we derive that
\begin{align*}
\int_{\mathbb{R}^3}f_{[k]}(t, x, v)dv & \leq C\|f_{0, k}\|_{0}  (t+|x|+1)^{-3} \La_{k, 1}^5 \La_{k, 2}^3 \La_{k, 3}^3\\
&\leq C \varepsilon_0 2^{(2-k)q} (t+|x|+1)^{-3} (2^k \La \ln \La)^5 (2^k \La^2 (\ln\La)^2)^3 (1+k+\ln\La)^3\\
&\leq C \varepsilon_0 2^{(8-q)k } k^3 (t+|x|+1)^{-3}\La^{11} (\ln\La)^{14}.
\end{align*}
Since $q>9$ and using the decomposition of $f$ in section \ref{sec:localization}, we therefore derive that
\begin{align*}
\int_{\mathbb{R}^3} f(t, x, v)dv &= \sum\limits_{k=0}^{\infty} \int_{\mathbb{R}^3} f_{[k]}(t, x, v)dv\\
&\leq  C \varepsilon_0 (1+t+|x|)^{-3} \La^{11} (\ln\La)^{14}.
\end{align*}
We thus finished the proof for the main Theorem \ref{thm:main}.

\bibliography{shiwu}{}
\bibliographystyle{plain}
\end{document}